\documentclass[11pt]{article}
\usepackage{setspace}
\usepackage[numbers]{natbib}
\usepackage{amsmath,amssymb,amsthm,mathtools}
\usepackage[margin=0.75in]{geometry}
\usepackage{pdflscape}
\usepackage{booktabs}
\usepackage{tabularx}
\usepackage{array}
\usepackage{float}
\usepackage{graphicx}
\usepackage{placeins}
\usepackage{appendix}
\usepackage{enumitem}
\usepackage{algorithm}
\usepackage{algorithmic}
\usepackage{tikz}
\usetikzlibrary{arrows.meta}
\usetikzlibrary{decorations.pathreplacing}
\usepackage{tikz}
\usepackage{caption}
\usepackage{comment}
\usepackage{bbm}
\newcommand{\mb}[1]{\mbox{\boldmath $#1$}}

\newtheorem{theorem}{Theorem}
\newtheorem{lemma}{Lemma}
\newtheorem{remark}{Remark}

\newtheorem{corollary}{Corollary}

\newtheorem{definition}{Definition}

\DeclareMathOperator*{\argmax}{arg\,max}
\usepackage{hyperref}

\newenvironment{keyword}
{\par\noindent\textbf{Keywords:} }{\par}

\newcounter{appproof}[section]
\renewcommand{\theappproof}{\thesection.\arabic{appproof}}

\newenvironment{appproof}[1][]{%
    \refstepcounter{appproof}%
    \begin{proof}[Proof~\theappproof\if\relax\detokenize{#1}\relax\else~(#1)\fi]%
}{%
    \end{proof}%
}

\title{
Dimension Dependent Correlation Gap Bounds 
under Restricted Independence
}

\author{
Arjun Ramachandra\thanks{Decision Sciences Area, Indian Institute of Management Bangalore, India. Email: \texttt{arjun.ramachandra@iimb.ac.in}}
}

\date{June 2026}

\begin{document}
\maketitle
\begin{abstract}
The pairwise independent correlation gap is defined as the ratio of the maximum expected value of a set function under arbitrary dependence to the maximum expected value when the underlying random elements are restricted to be pairwise independent, and hence measures the loss in expected value under this independence restriction. Under the more restrictive assumption of mutual independence, \cite{Agrawal2012} proved that this gap is universally bounded by $e/(e-1)$ for the class of monotone submodular set functions. With pairwise independence, a tighter upper bound of $4/3$ was established in \cite{ramachandra_special_cases_OR_letters} for several special cases, including $n=3$, and conjectured to hold universally across all dimensions and marginal probabilities for the class of monotone submodular functions. However, a recent AI-assisted counterexample in \cite{ramachandra_natarajan_counter_2026} disproved this conjecture by showing that the $4/3$ bound fails for $n=5$, leaving the validity of the bound for $n=4$ and the tight worst case bound for pairwise independence open.

We resolve both questions in this paper. First, for $n=4$, we establish that the $4/3$ bound holds universally and is tight using an AI-assisted proof combining theoretical analysis and computational verification. 
The proof combines a structural characterization of optimal numerator vertices, permutation symmetry, cone certificate systems, Bernstein polynomial representations, recursive simplex subdivision, and a computational verification of $2,745$ Bernstein coefficient systems. 
Second, we show that the worst case pairwise independent correlation gap
attains $e/(e-1)$ asymptotically by constructing an instance with identical
marginal probabilities and defining a monotone submodular union coverage function
on a ground set partitioned into $m$ blocks. The key condition is that the number of blocks grows
sublinearly with the ground set size, so that both the number of blocks and
the size of each block tend to infinity.
The result follows by constructing a feasible solution to a scaled asymptotic reduced dual
of the pairwise independent linear program with identical marginals. The asymptotic result immediately extends to $t$-wise independent 
($t\ge2$) random elements, since $t$-wise independence implies pairwise independence. 
The key insight of this paper is that the pairwise independent correlation gap exhibits a
dimension dependent worst case bound, with $n=4$ being the largest dimension
for which the $4/3$ bound holds, while the worst case gap fails to improve upon the classical
$e/(e-1)$ bound, attaining it asymptotically.
Thus, pairwise independence, despite being the least restrictive form of independence in the $t$-wise
independence hierarchy
can be as restrictive as mutual independence in the worst case.
\end{abstract}
\begin{keyword}
Correlation gap; pairwise independence; monotone submodular functions;
coverage functions; concave closure; Bernstein polynomials; linear programming.
\end{keyword}
\section{Introduction}
Let $N=\{1,2,\ldots,n\}$ denote a ground set with $n$ elements and $f: 2^{N} \rightarrow \mathbb{R}_+$ be a nonnegative monotone submodular set function defined on the power set $2^N$. A function is monotone (non-decreasing) if $ f(S) \leq f(T)$ for all $S \subseteq T$ and submodular if $f(S) + f(T) \geq f(S \cap T) +  f(S \cup T)$ for all $S, T \subseteq N$ or equivalently $f(S \cup \{i\}) - f(S) \geq f(T \cup \{i\}) -  f(T)$ for all $S \subseteq T$, $i \in N \backslash T.$ 
For a given marginal probability vector $\mb{x}=(x_1,\ldots,x_n)$ and a monotone submodular function $f$, the correlation gap compares the maximum expected value under arbitrary dependence with the expected value obtained under independence by computing their ratio. 
The numerator in this ratio is referred to as the concave closure
$f^{+}:[0,1]^n\rightarrow\mathbb{R}_+$ and can be viewed as a continous extension of the set function $f$, defined as the maximum expected
value of the function over all joint distributions $\mb{u}$ in which
each element $i\in N$ is selected with probability $x_i$, \emph{i.e.},
\begin{equation}
\label{eq:concave_closure}
\begin{array}{rl}
f^{+}(\mb{x})
=
\displaystyle
\max_{\mb{u}}
&
\mathbb{E}_{\mb{u}}[f]=\displaystyle
\sum_{S\subseteq N}u_S f(S)
\\[4ex]
\mathrm{s.t.}
&
\displaystyle
\sum_{S\subseteq N}u_S=1,
\\[4ex]
&
\displaystyle
\sum_{S\ni i}u_S=x_i,
\qquad \forall i\in N,
\\[4ex]
&
\displaystyle
u_S\ge0,
\qquad \forall S\subseteq N.
\end{array}
\end{equation}
where $u_S$ denotes the probability mass assigned to the subset
$S\subseteq N$. Here, the symbol $u$ is used to denote the univariate information
specified by the marginal probabilities, with no restrictions imposed on
higher order dependence. The denominator in the correlation gap ratio is referred to as the multilinear extension defined as $F(\mb{x})=\sum_{S \subseteq N}f(S)\left(\prod_{i \in S} x_i \prod_{i \notin S} (1-x_i)\right)$ which is the expected function value assuming the random elements to be mutually independent. The correlation gap $f^{+}(\mb{x})/F(\mb{x})$ can be interpreted as the loss of optimality incurred by ignoring the higher order dependence (correlations) and imposing independence instead and was shown in~\cite{Agrawal2012} to be universally bounded by \(e/(e-1)\) for the class of monotone submodular functions. It has found widespread applications in stochastic optimization including facility location, Steiner tree problems, and social welfare maximization with combinatorial auctions~\citep{Agrawal2012}, mechanism design~\citep{Yan2011}, distributionally robust bottleneck optimization \citep{ramachandra_special_cases_OR_letters}, prophet inequalities~\citep{ChekuriLivanos2024}, matroid theory~\citep{Husic2025} and several submodular optimization problems~\citep{Chekuri2014}. Given the restrictive nature of the mutual independence assumption and motivated by whether relaxing mutual independence to weaker notions of independence can further reduce this gap, \cite{ramachandra_natarajan_SIDMA} introduced the pairwise independent extension of the set function $f$ given by $f^{++}: [0,1]^n \rightarrow \mathbb{R}_+$. In addition to the marginal probability information, this extension assumes that the random elements are pairwise independent and can be expressed as:
\begin{equation}
\label{eq:pairwise_concave_closure}
\begin{array}{rl}
f^{++}(\mb{x})
=
\displaystyle
\max_{\mb{p}}
&
\mathbb{E}_{\mb{p}}[f]=\displaystyle
\sum_{S\subseteq N}p_s f(S)
\\[4ex]
\mathrm{s.t.}
&
\displaystyle
\sum_{S\subseteq N}p_s=1,
\\[4ex]
&
\displaystyle
\sum_{S\ni i}p_s=x_i,
\qquad \forall i\in N,
\\[4ex]
&
\displaystyle
\sum_{S\supseteq\{i,j\}}p_s=x_ix_j,
\qquad \forall i<j,\; i \in N,\; j \in N
\\[4ex]
&
\displaystyle
p_s\ge0,
\qquad \forall S\subseteq N.
\end{array}
\end{equation}
where $p_S$ denotes the probability mass assigned to the subset
$S\subseteq N$ with the symbol $p$ used to denote the pairwise information specified by the marginal and pairwise probabilities.
Pairwise independence is a restrictive notion of independence in the sense that it imposes only pairwise constraints while allowing arbitrary higher order dependencies. The sparse support of these distributions also has proved to be useful in
efficient derandomization of algorithms for NP-hard optimization problems
\cite{lubywigderson}. The pairwise independent correlation gap, defined in the same paper of \cite{ramachandra_natarajan_SIDMA} as the ratio
$
f^{+}(\mb{x})/f^{++}(\mb{x}),
$
was expected to admit a sharper upper bound than the $e/(e-1)$ bound on $
f^{+}(\mb{x})/F(\mb{x})
$. This is because the pairwise independent extension in \eqref{eq:pairwise_concave_closure} includes the mutually independent distribution as a feasible solution, which implies that 
$$
 f^{+}(\mb{x})/f^{++}(\mb{x}) \le f^{+}(\mb{x})/F(\mb{x}) \le e/(e-1).
$$
Unfortunately, computing
both $f^{+}(\mb{x})$ and $f^{++}(\mb{x})$ is NP-hard in the worst case for monotone submodular functions (see \cite{Agrawal2012}). In particular, unlike $F(\mb{x})$, which is evaluated for
the mutually independent distribution irrespective of the function $f$, the optimal probability distributions that attain the $f^{+}(\mb{x})$ and $f^{++}(\mb{x})$ are not oblivious
to the function $f$  wich makes the analysis challenging even for small dimensions such as $n=3$ (see \cite{ramachandra_special_cases_OR_letters}).

Despite this challenge, a reduced upper bound of $4/3$ on 
$f^{+}(\mb{x})/f^{++}(\mb{x})$ has been proven  in special cases. For the specific submodular function
$f(S)=\min(|S|,1)$, with arbitrary dimension $n$ and any marginal probabilities $\mb{x} \in [0,1]^n$, \cite{ramachandra_natarajan_SIDMA} proved that the $4/3$ upper bound was tight. The analysis was further extended in \cite{ramachandra_special_cases_OR_letters}, showing that the $4/3$ bound holds for all monotone submodular
functions in two cases: (a) $n=3$ with unrestricted marginal
probabilities, and (b) general $n$ with restricted marginal
probabilities. However, unlike the $e/(e-1)$ bound, the universal validity of the $4/3$ bound for arbitrary monotone submodular functions, dimensions $n$, and marginal probabilities $\mb{x}$ remained an open question, leading \cite{ramachandra_special_cases_OR_letters} to conjecture that the $4/3$ bound holds universally.
\section{Recent Developments and Our Contributions}
More recently, the universal $4/3$ conjecture was disproved in \cite{ramachandra_natarajan_counter_2026}, by constructing an AI-assisted counterexample for $n=5$ Bernoulli random elements that attains a pairwise independent correlation gap of
\[
\frac{640}{479} \approx 1.336 > \frac{4}{3}
\]
with a  monotone submodular coverage function. The $n=5$ counterexample can be extended to every dimension $n> 5$
by padding it with Bernoulli variables having marginal probability zero.
Such variables are identically zero and hence neither appear in the support
of the optimal joint distribution nor affect the marginal
constraints or the optimal objective. This means that the worst case ratio lies in the interval $
\left[{640/479},\,e/(e-1)\right]$ for $n \ge 5$. Consequently, as claimed in \cite{ramachandra_natarajan_counter_2026}, $n=4$ is the only dimension for which the validity of the $4/3$ bound remained unresolved. On the other hand, the counterexample for $n\geq5$ raises a broader question: if the $4/3$ bound fails for
$n\geq5$, what then is a universal upper bound on the pairwise
independent correlation gap across dimensions $n$ ? Can it beat the $e/(e-1)$ bound in the worst case? We resolve both questions in this paper, the first in the affirmative and the second in the negative, by establishing that:
\begin{enumerate}
    \item  The $4/3$ upper bound survives for $n=4$
    using an AI-assisted proof combining theoretical analysis and computational verification, and is tight.
    
    \item  The worst case
    pairwise independent correlation gap  attains
    $e/(e-1)$ asymptotically as $n \to \infty$, the same worst case bound established in \cite{Agrawal2012} under mutual independence. As an immediate consequence, since every $t$-wise independent distribution ($t\ge 2$) is necessarily pairwise independent, our result implies that the worst case $t$-wise independent correlation gap is also exactly $e/(e-1)$.
\end{enumerate} 
These two results are fundamentally different in nature: The first result for $n=4$, involves
providing a universal certification showing that the $4/3$ bound holds for
every monotone submodular function and every marginal probability vector
$\mb{x}\in[0,1]^4$. The second result, on the other hand, provides an asymptotic
worst case construction showing that $e/(e-1)$ is asymptotically attained as $n \to \infty$, implying that pairwise independence, despite being the least restrictive form of independence in the $t$-wise
independence hierarchy, can be as restrictive as mutual independence in the \emph{worst case}. Together, these results add to the literature on the characterization of the worst case pairwise independent correlation gap across dimensions for the class of monotone submodular functions. 

\begin{table}[htbp]
\centering
\caption{Tight worst case pairwise independent correlation gap for monotone submodular functions}
\label{tab:pairwise_gap_dimension}

\begin{tabular}{|c|c|c|}
\hline
Dimension & Tight worst case bound & Reference \\
\hline
\rule{0pt}{4.5ex}
$n=2,3$
& $\displaystyle \frac{4}{3}$
& \cite{ramachandra_special_cases_OR_letters}
\rule[-2.5ex]{0pt}{0pt} \\
\hline
\rule{0pt}{4.5ex}
$n=4$
& $\displaystyle \frac{4}{3}$
& This paper
\rule[-2.5ex]{0pt}{0pt} \\
\hline
\rule{0pt}{4.5ex}
$n\geq5$
& $\displaystyle
\left[\frac{640}{479},\,\frac{e}{e-1}\right]$
& \cite{ramachandra_natarajan_counter_2026}
\rule[-2.5ex]{0pt}{0pt} \\
\hline
\rule{0pt}{4.5ex}
$n\to\infty$
& $\displaystyle \frac{e}{e-1}$
& This paper
\rule[-2.5ex]{0pt}{0pt} \\
\hline
\end{tabular}
\end{table}

Table~\ref{tab:pairwise_gap_dimension} summarizes the progression of
results on the worst case pairwise independent correlation gap and
presents the current state of knowledge across dimensions $n$. The rest of the paper is organized as follows: Section \ref{sec:n=4} establishes the tightness of the $4/3$ bound for $n=4$ random elements while Section \ref{sec:asymptotic_pw_correlation_gap} establishes the asymptotic attainment of the $e/(e-1)$ bound as $n \to \infty$ and Section \ref{sec:conclusion} summarizes the key contributions of this work with possible future directions. Proofs are relegated to the appendix whenever possible and are retained in
the main body only when they are essential for understanding subsequent
results.
\section{A $4/3$ bound for $n=4$ random elements}\label{sec:n=4}
For $n=4$ random elements, denote the ground set as $N_4=\{1,2,3,4\}$. 
\begin{theorem}[Worst case pairwise independent correlation gap for $n=4$]\label{thm:n=4_main_result}
 For every nonnegative monotone
submodular function $f:2^{N_4}\rightarrow\mathbb{R}_+$ and every
$\mb{x}\in[0,1]^4$,
\[
\frac{f^+(\mb{x})}{f^{++}(\mb{x})}\leq \frac{4}{3}
\]
and the bound is tight.
\end{theorem}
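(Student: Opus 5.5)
The tightness part is inherited from dimension $3$ (or from the function $f(S)=\min(|S|,1)$ treated in \cite{ramachandra_natarajan_SIDMA}). We take a worst-case instance for $n=3$ with ratio $4/3$, as in \cite{ramachandra_special_cases_OR_letters}, and pad it with a fourth element of marginal $x_4=0$. As noted above, such an element never appears in any support and changes neither $f^+$ nor $f^{++}$. The substantive part is therefore the upper bound $f^+(\mb{x})\le \tfrac43 f^{++}(\mb{x})$ for all monotone submodular $f$ on $N_4$ and all $\mb{x}\in[0,1]^4$.

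\textbf{Reductions.} The plan is to linearize in $f$ and then certify via duality.
\begin{enumerate}
\item By homogeneity, and since both sides are maxima of linear functionals of $f$, we normalize $f^+(\mb{x})=1$.
\item We fix an optimal numerator distribution $\mb{u}$. A basic optimal solution of \eqref{eq:concave_closure} has support at most $n+1=5$ sets. For submodular $f$ it can be taken to be a chain, since uncrossing $S,T$ into $S\cap T,S\cup T$ does not decrease the objective. So $f^+(\mb{x})$ is attained on a chain $\emptyset\subseteq S_0\subset S_1\subset\cdots$.
\item By the permutation symmetry of $N_4$ we order $x_1\ge x_2\ge x_3\ge x_4$ and take the chain $\emptyset,\{1\},\{1,2\},\{1,2,3\},N_4$ with weights given by consecutive differences of the $x_i$. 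The value is then $f^+(\mb{x})=\sum_k (x_k-x_{k+1})f([k])$, with $x_5=0$ and $[0]=\emptyset$.
\end{enumerate}
This is the "structural characterization of optimal numerator vertices." One caveat: the chain need not be the sorted one in general. I would handle this by enumerating the finitely many chain types up to symmetry, and the greedy/Lovász-type ordering argument may reduce them further.

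\textbf{Dual certificate.} With the numerator fixed, it suffices to show that for every $f$ in the cone $\mathcal{C}$ of monotone submodular functions there is a pairwise independent $\mb{p}$ with $\sum_S p_S f(S)\ge \tfrac34\sum_k (x_k-x_{k+1})f([k])$. The cone $\mathcal{C}$ is polyhedral in $\mathbb{R}^{16}$ and has finitely many extreme rays. On $N_4$ these can be enumerated, modulo the symmetry that fixes the chain. We also can't pick $\mb{p}$ per ray, since a single $\mb{p}$ must serve each $f$; but for a fixed $f$ the maximization over $\mb{p}$ is an LP. So for each $f$ we want a $\mb{p}$, and minimax lets us instead produce multipliers certifying membership: $\tfrac34 \ell_{\mb{x}} - \sum_j \lambda_j(\mb{x})\,\mathbb{E}_{\mb{p}^{(j)}(\mb{x})}$ lies in the dual cone $\mathcal{C}^*$. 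Here the $\mb{p}^{(j)}(\mb{x})$ are explicit polynomial pairwise independent distributions, for example the mutually independent product distribution plus a few structured distributions supported on $\le 1+n+\binom n2$ sets, and the $\lambda_j(\mb{x})\ge 0$ are convex weights.

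Equivalently, for each extreme ray $f$ we need one of finitely many candidate pairwise-independent vertex distributions to achieve $\ge \tfrac34$. This becomes the task of verifying that finitely many polynomial inequalities in $\mb{x}$ hold on the ordered simplex region $\{1\ge x_1\ge\cdots\ge x_4\ge0\}$. These are the "cone certificate systems."

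\textbf{Main obstacle.} The obstacle is verifying these polynomial inequalities uniformly over the region, particularly near boundary points such as $x_i\to 0$ or equal marginals, where the ratio approaches $4/3$ and the slack vanishes. My first approach would be to map the ordered region to a standard simplex, expand each difference polynomial in the Bernstein basis, and check nonnegativity of the Bernstein coefficients. Where a check fails, I would recursively subdivide the simplex (de Casteljau) until all coefficients are nonnegative, or certify tight points analytically by factoring out vanishing terms. The case splitting comes from which candidate distribution is feasible (nonnegative) in which subregion. This produces a finite but large list of coefficient systems, which would be checked in exact rational arithmetic; that step is the computational verification.
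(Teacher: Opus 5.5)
Your tightness argument is fine, but reduction step~2 has the uncrossing inequality backwards, and this is a genuine gap. For submodular $f$ one has $f(S)+f(T)\ge f(S\cap T)+f(S\cup T)$. Replacing $S,T$ by $S\cap T,S\cup T$ preserves the marginals but weakly \emph{decreases} $\mathbb{E}_{\mb{u}}[f]$. Chain distributions therefore compute the Lov\'asz extension, which for submodular $f$ is the convex closure, i.e.\ the \emph{minimum} of $\mathbb{E}_{\mb{u}}[f]$ over $\mathcal{F}_{\mathrm U}(\mb{x})$, not the maximum $f^+(\mb{x})$.

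A concrete failure: for $f(S)=\min\{|S|,1\}$, your formula $\sum_k(x_k-x_{k+1})f([k])$ equals $\max_i x_i$. The true value is $f^+(\mb{x})=\min\{1,\sum_i x_i\}$, attained by placing mass on disjoint singletons. At the tightness instance $x_1+x_2+x_3=x_4=\tfrac12$ these are $\tfrac12$ versus $1$. Since the convex closure is at most $F(\mb{x})\le f^{++}(\mb{x})$, the inequality you propose to certify holds trivially with constant $1$ and says nothing about $f^+$.

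Your fallback of enumerating chain types does not help either. Five distinct nested subsets of $N_4$ form a maximal chain, so the chain types up to symmetry make up a single orbit (Orbit~31 in Table~\ref{tab:orbit_subdivision_statistics}). Optimal numerator vertices of submodular functions are typically non-chains, such as $\{\{1\},\{2\},\{3\},\{4\},N_4\}$ or $\{\emptyset,\{1,2\},\{1,3\},\{1,4\},\{2,3,4\}\}$, and these carry the real difficulty.

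What is needed instead is the paper's treatment. Allow the optimal vertex to sit on any affinely independent five-subset support: all $3{,}008$ such bases, falling into $183$ orbits. Parametrize $\mb{x}$ by barycentric coordinates $\mb{\eta}$ on the corresponding incidence simplex $\Delta_V(B)$, rather than on the ordered region $x_1\ge\cdots\ge x_4$, so that $\mb{u}^\star$ is affine in $\mb{\eta}$. Then exhibit one pairwise independent $\mb{p}(\mb{\eta})$ with $\tfrac43\mb{p}-\mb{u}^\star\in\operatorname{cone}(G^\top)+\mathbb{R}\,\mb{e}_{\emptyset}$.

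Your dual-cone formulation is this condition up to a sign slip. It is $\sum_j\lambda_j\mb{p}^{(j)}-\tfrac34\ell_{\mb{x}}$ that must lie in $\mathcal{C}^*$, not its negative. Also, a convex combination of the $\mb{p}^{(j)}$ is already a single pairwise independent distribution with marginals $\mb{x}$, so no minimax is needed.

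However, your ``equivalently, for each extreme ray'' restatement is not equivalent. The map $f\mapsto f^{++}(\mb{x})$ is a maximum of linear functionals, hence convex and subadditive in $f$. Separate distributions for separate rays therefore do not certify conic combinations of rays; a single $\mb{p}$ must serve the whole cone. Once the numerator is corrected in this way, your Bernstein expansion and recursive subdivision plan is essentially the paper's verification step.
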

\begin{proof}

A brief overview of the proof is outlined in the seven steps below:

\begin{enumerate}[label=(\roman*)]

\item We first recall a standard result from LP theory in Lemma \ref{lem:vertex_support_characterization}, which states that an optimal numerator vertex in $4$ dimensions is supported on at most five subsets. Further, if an optimal vertex is degenerate with smaller support, Corollary \ref{cor:degenerate_extension} shows that it can be extended to a nonsingular five-subset basis.

\item For a chosen optimal numerator vertex corresponding to one of the $3008$ nonsingular five-subset bases, Corollary \ref{cor:common_convex_coefficients} shows that the marginal vector $\mb{x}$ lies in the incidence simplex corresponding to the five incidence vectors of this basis. The set of all incidence simplices corresponding to these bases form an overlapping cover of the entire marginal probability space $[0,1]^4$. 

\item  Lemma~\ref{lem:cone_certificate} provides a sufficient cone certificate condition for the $4/3$ bound to hold for every function in the cone of nonnegative monotone submodular functions and every $\mb{x}\in[0,1]^4$. The key idea is to consider primal, rather than dual, certificates for the numerator, allowing both the numerator and denominator certificates to be formulated in the same primal space and thereby incorporated into a common cone certificate system. Since the optimal numerator vertex can correspond to any one of the 3008 nonsingular five-subset bases, a certificate of feasibility must be provided for each one of them with marginal probabilities restricted to lie in the corresponding incidence simplex.

\item To overcome the challenge of generating certificates over the continuum of marginal vectors for each such basis, we use a degree-two Bernstein basis to express the probability distributions and other relevant parameters as quadratic functions of the convex coefficients, yielding $15$ coefficient blocks corresponding to the
pairs $0\le a\le b\le4$. For each such coefficient block and a chosen nonsingular five-subset basis, Corollary \ref{cor:sufficient_certificates} shows that it is sufficient to check for feasibility of a Bernstein coefficient system  defined in Lemma \ref{lem:bernstein_coefficient_system} to ensure a feasible cone certificate.

\item Further, we exploit permutation symmetry to show that the $3,008$ nonsingular five-subset bases can be reduced to $183$ orbits, where bases in the same orbit are equivalent up to a permutation of the four elements. Lemma \ref{lem:symmetry_permutation_invariance} shows that it suffices to solve each of the the $15$ Bernstein coefficient systems for one representative from each orbit.

\item  To handle representative orbit simplices on which the Bernstein coefficient system is infeasible, we provide an Algorithm~\ref{alg:bernstein_subdivision} that recursively subdivides each such simplex along its longest edge until all terminal leaf simplices admit feasible certificates, thereby providing a certified cover of the entire simplex.

\item 
Finally, we implemented Algorithm~\ref{alg:bernstein_subdivision} in code and used linear programming to computationally verify the feasibility of all $15\times183=2,745$ Bernstein coefficient systems corresponding to the $15$ coefficient blocks and $183$ permutation orbit representatives. All systems were found to be feasible, yielding a total of $4,476$ successful leaf simplices and thereby establishing the $4/3$ bound for $n=4$ as claimed.
\end{enumerate}
\end{proof} 
\paragraph{Declaration of AI-assisted research methodology for the $n=4$ proof:} The $n=4$ proof strategy was initially explored through a human-AI workflow involving GPT 5.6 and Fable 5, including the cone certificate formulation and the use of a Bernstein representation to avoid checking certificates separately at every marginal probability vector. The author subsequently verified, corrected, and refined the resulting strategy, in particular by requiring the numerator distribution in the cone certificate to correspond to an optimal primal vertex, and developed the resulting systematic treatment of the possible optimal vertices. The subsequent proof development incorporated permutation symmetry and recursive simplex subdivision along with independent verification of computational results. 

\begin{remark}[Negative dependence is fragile in high dimensions]
The structure of submodularity tends to encourage negative dependence, and
our $n=4$ result resonates with the broader observation in the literature that negative
dependence is delicate and need not persist beyond low dimensional regimes. In matroid theory, for example,~\cite{wagner_rank_2004} showed that every rank-$3$ matroid has the
Rayleigh property, which corresponds to pairwise negative correlation,
whereas rank-$4$ matroids need not have this property.~\cite{huh_schroter_wang_2022} provided an example to illustrate a positively correlated
pair in a rank-$4$ transversal matroid.
\end{remark}

 \begin{remark}[Advantages of a primal based proof technique]
Unlike \cite{ramachandra_special_cases_OR_letters}, where the $4/3$ bound for $n=3$ was proved analytically using the dual of the concave
closure for the numerator (see Theorem ~5 therein), the primal based approach in our proof of Theorem \ref{thm:n=4_main_result} works
directly with $183$ representative permutation orbits of the $3,008$ possible optimal primal numerator vertices. 
The denominator, however, is handled directly through a primal feasible
pairwise independent distribution, similar to \cite{ramachandra_special_cases_OR_letters}. The key advantage of a primal based approach for the numerator is that it
allows the numerator and denominator certificates to be formulated in the
same primal space, so that a common cone certificate system can be used, as in  Lemma \ref{lem:cone_certificate}.
In contrast, the approach in \cite{ramachandra_special_cases_OR_letters}
requires identifying dual feasible solutions for the numerator and primal
feasible solutions for the denominator across every interaction between the
partition blocks of the marginal probability space and those of the
submodular function cone.
\end{remark}

\begin{remark}[Tightness of $4/3$ bound for $n=4$]
The $4/3$ bound in Theorem~\ref{thm:n=4_main_result} for $n=4$ is attained
for the submodular function $f(S)=\min\{|S|,1\}$, $S\subseteq N_4$, with
marginal probabilities satisfying
$
x_1+x_2+x_3=\frac12,\; x_4=\frac12.
$
This follows directly from Proposition~2.5 in
\cite{ramachandra_natarajan_SIDMA}, which establishes the same tightness
result for general $n$.
\end{remark}
\noindent The remainder of this section details the proof of Theorem~\ref{thm:n=4_main_result} in parts, following the steps outlined above.
\subsection{Preliminary results leading to proof of Theorem~\ref{thm:n=4_main_result}}
For any $S\subseteq N_4$, let
\[
\mb{v}_S
=
\left(
\mathbbm{1}_{\{1\in S\}},
\mathbbm{1}_{\{2\in S\}},
\mathbbm{1}_{\{3\in S\}},
\mathbbm{1}_{\{4\in S\}}
\right)^\top
\in\{0,1\}^4,
\]
denote the incidence vector of $S$, where $\mathbbm{1}_{\{\cdot\}}$ denotes the indicator function. Define the matrix
\[
A=
\begin{bmatrix}
1 & 1 & \cdots & 1\\
\mb{v}_{\emptyset} & \mb{v}_{\{1\}} & \cdots & \mb{v}_{N_4}
\end{bmatrix}
\in\mathbb{R}^{5\times16},
\]
where the columns are indexed by the $16$ subsets of $N_4$. Then for any  fixed marginal vector $\mb{x}\in[0,1]^4$, the
 feasible region of the concave closure \eqref{eq:concave_closure} can be expressed as:
\[
\mathcal{F}_{\mathrm{U}}(\mb{x})
:=
\left\{
\mb{u}\in\mathbb{R}^{16}_+:
A\mb{u}=
\begin{pmatrix}
1\\
\mb{x}
\end{pmatrix}
\right\},
\]
which is non-empty, since the mutually independent distribution $\mb{u}_{\textrm{ind}}(S)=\prod_{i \in S}x_i \;\prod_{i \notin S}(1-x_i)$ satisfies $\mb{u}_{\textrm{ind}} \in \mathcal{F}_{\mathrm{U}}(\mb{x})$. The optimal objective of the concace closure is then:
\[
f^+(\mb{x})
=
\max_{\mb{u}\in\mathcal{F}_{\mathrm{U}}(\mb{x})}
\mb{f}^{\top}\mb{u}
\]
where $\mb{f}\in\mathbb{R}^{16}_+$ denotes the vector representation of the non-negative monotone submodular
set function $f:2^{N_4}\to\mathbb{R}$.
Denote by $2^{N_4}$ the power set of $N_4$ containing $16$ possible subsets. Let $J=\operatorname{supp}(\mb{u})\subseteq 2^{N_4}$ denote the support of
the feasible distribution $\mb{u}$, \emph{i.e.}, the collection of subsets
$S\subseteq N_4$ to which $\mb{u}$ assigns positive probability. The next lemma recalls a standard result from linear programming theory characterizing the support of a vertex distribution $\mb{u}\in\mathcal{F}_{\mathrm U}(\mb{x})$.
\begin{lemma}[Vertex characterization by support]
\label{lem:vertex_support_characterization}
A feasible distribution $\mb{u}\in\mathcal{F}_{\mathrm U}(\mb{x})$ is a vertex if and
only if the columns of $A$ corresponding to
$\operatorname{supp}(\mb{u})$ are linearly independent. In particular,
every vertex of $\mathcal{F}_{\mathrm U}(\mb{x})$ is supported on at most
five subsets of $N_4$.
\end{lemma}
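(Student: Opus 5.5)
The plan is to prove the equivalence in both directions directly from the definition of a vertex (extreme point) of the polyhedron $\mathcal{F}_{\mathrm U}(\mb{x})$. Then we deduce the support bound from the rank of $A$. Throughout, write $J=\operatorname{supp}(\mb{u})$ and let $A_J$ denote the $5\times|J|$ submatrix of $A$ formed by the columns indexed by $J$.

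For the direction ``columns independent $\Rightarrow$ vertex'', suppose $A_J$ has linearly independent columns. Assume $\mb{u}=\tfrac12(\mb{u}^1+\mb{u}^2)$ with $\mb{u}^1,\mb{u}^2\in\mathcal{F}_{\mathrm U}(\mb{x})$.
\begin{itemize}
\item Nonnegativity forces $u^1_S=u^2_S=0$ for every $S\notin J$, so both points are supported inside $J$.
\item Then $A_J(\mb{u}^1_J-\mb{u}^2_J)=\mb{0}$.
\item Injectivity of $A_J$ gives $\mb{u}^1=\mb{u}^2=\mb{u}$.
\end{itemize}
Hence $\mb{u}$ is not a proper convex combination of two distinct feasible points, i.e., it is a vertex.

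For the direction ``vertex $\Rightarrow$ columns independent'', argue by contraposition. Suppose there is a nonzero $\mb{d}\in\mathbb{R}^{16}$ with $\operatorname{supp}(\mb{d})\subseteq J$ and $A\mb{d}=\mb{0}$. Since $u_S>0$ for all $S\in J$, there is an $\varepsilon>0$ small enough that $\mb{u}\pm\varepsilon\mb{d}\ge\mb{0}$. Both points satisfy $A(\mb{u}\pm\varepsilon\mb{d})=(1,\mb{x})^\top$, so both lie in $\mathcal{F}_{\mathrm U}(\mb{x})$. They are distinct, and $\mb{u}$ is their midpoint, so $\mb{u}$ is not a vertex.

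For the cardinality claim, note that $A\in\mathbb{R}^{5\times16}$ has rank at most $5$. Any linearly independent set of its columns therefore has at most five elements, so $|J|\le5$ at every vertex. In fact the rank is exactly $5$, witnessed by the columns for $\emptyset,\{1\},\{2\},\{3\},\{4\}$, although only the upper bound is needed here.

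There is no real obstacle: this is the standard basic-feasible-solution characterization specialized to the standard-form polyhedron $\{\mb{u}\ge0: A\mb{u}=b\}$. The only care needed is choosing $\varepsilon$ in the contrapositive step, which works because the perturbation $\mb{d}$ is confined to the strictly positive coordinates of $\mb{u}$.
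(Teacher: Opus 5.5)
Your proof is correct and follows the same route as the paper: a perturbation $\mb{u}\pm\varepsilon\mb{d}$ along a kernel direction supported in $J$ for one direction, and writing $\mb{u}$ as a midpoint of two feasible points that must vanish off $J$ for the other, with $|J|\le 5$ following from the rank of $A$. Your condition $\operatorname{supp}(\mb{d})\subseteq J$ is slightly more accurate than the paper's $\operatorname{supp}(\mb{d})=J$, since a linear dependence among the columns indexed by $J$ need not involve all of them.
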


\begin{proof}
The proof is relegated to Appendix~\ref{proof:lem_vertex_support_characterization}. 
\end{proof}
We call a collection of five subsets
$
B=\{S_0,S_1,S_2,S_3,S_4\},
\; S_i\subseteq N_4,
$
a \emph{nonsingular five subset basis} if the corresponding incidence
vectors
$
\mb{v}_0,\mb{v}_1,\mb{v}_2,\mb{v}_3,\mb{v}_4\in\{0,1\}^4
$
are affinely independent, or equivalently, the associated submatrix
\[
A_B=
\begin{bmatrix}
1 & 1 & 1 & 1 & 1\\
\mb{v}_0 & \mb{v}_1 & \mb{v}_2 & \mb{v}_3 & \mb{v}_4
\end{bmatrix}
\in\mathbb{R}^{5\times5}
\]
is nonsingular. We next establish that the support of a degenerate vertex can be extended to a nonsingular
five subset basis. 
\begin{corollary}[Extension of a degenerate vertex to a nonsingular five subset basis]
\label{cor:degenerate_extension}
Let $\mb{u}$ be a degenerate vertex of $\mathcal{F}_{\mathrm{U}}(\mb{x})$ with
$|\operatorname{supp}(\mb{u})|<5$. Then the corresponding columns of $A$
can be extended to a set of five linearly independent columns of $A$,
corresponding to a nonsingular five subset basis.
\end{corollary}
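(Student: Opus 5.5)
The plan is to use Lemma~\ref{lem:vertex_support_characterization} together with the fact that $A$ has full row rank $5$. By that lemma, the columns $\{A_S: S\in\operatorname{supp}(\mb{u})\}$ are linearly independent, say $k=|\operatorname{supp}(\mb{u})|<5$ of them. The goal is to extend this independent set to a basis of $\mathbb{R}^5$ using only columns of $A$. This is the standard basis-extension (Steinitz exchange) argument applied inside the column space of $A$.

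The first step is to show $\operatorname{rank}(A)=5$. This is checked directly: the columns indexed by $\emptyset,\{1\},\{2\},\{3\},\{4\}$ are $(1,\mb{0})^\top$ and $(1,\mb{e}_i)^\top$, $i=1,\ldots,4$. Subtracting the first from the others gives $(0,\mb{e}_i)^\top$, so these five columns span $\mathbb{R}^5$. Hence the column space of $A$ is all of $\mathbb{R}^5$.

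Next comes the greedy extension. Start from the independent set $C_0=\{A_S:S\in\operatorname{supp}(\mb{u})\}$. As long as $|C_j|<5$, the span of $C_j$ is a proper subspace of $\mathbb{R}^5=\operatorname{col}(A)$. So some column $A_T$ of $A$ lies outside this span, and in fact one can take $A_T$ from the five spanning columns above. Adjoining $A_T$ keeps the set independent, and the new column is automatically distinct from those already chosen. After $5-k$ steps we obtain five linearly independent columns indexed by a collection $B\supseteq\operatorname{supp}(\mb{u})$ with $|B|=5$. Then $A_B$ is nonsingular, which by the equivalence noted in the definition means the incidence vectors of $B$ are affinely independent. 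So $B$ is a nonsingular five subset basis.

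Finally, $\mb{u}$ is the basic solution associated with $B$: setting $u_S=0$ for $S\in B\setminus\operatorname{supp}(\mb{u})$ gives $A_B\mb{u}_B=(1,\mb{x})^\top$, so $\mb{u}_B=A_B^{-1}(1,\mb{x})^\top$. This is the form needed later in Corollary~\ref{cor:common_convex_coefficients}. There is no real obstacle here; the only point requiring care is the full row rank of $A$, which the explicit five columns settle.
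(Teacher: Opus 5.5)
Your proposal is correct and follows the paper's argument. Both use Lemma~\ref{lem:vertex_support_characterization} to get linear independence of the support columns, then use $\operatorname{rank}(A)=5$ to extend them to five independent columns. Your explicit check of full row rank via the columns indexed by $\emptyset,\{1\},\ldots,\{4\}$, which the paper only asserts, is a welcome addition.
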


\begin{proof}
By Lemma~\ref{lem:vertex_support_characterization}, for any vertex $\mb{u}$, the columns corresponding to 
$\operatorname{supp}(\mb{u})$ are linearly independent. Since
$\operatorname{rank}(A)=5$, they can be extended to five linearly
independent columns of $A$, yielding a nonsingular five subset basis.
\end{proof}
Let $V=\{\mb{v}_0,\ldots,\mb{v}_4\}$
be a set of affinely
independent incidence vectors $\mb{v}_0,\ldots,\mb{v}_4\in\{0,1\}^4$. Define the corresponding incidence simplex as
$
\Delta_{V}
=
\operatorname{conv}\{\mb{v}_0,\ldots,\mb{v}_4\}.
$ 
The next result shows that the optimal numerator vertex
$\mb{u}^\star(\mb{x},\mb{f})$ corresponding to a given marginal probability vector $\mb{x}$ and monotone submodular function vector $\mb{f}$ induces a nonsingular five subset basis $B$
whose incidence simplex $\Delta_V(B)$ contains the marginal probability vector
$\mb{x}$. 
\begin{corollary}[Marginal probability vector in the convex hull of an optimal basis]
\label{cor:common_convex_coefficients}
Given $\mb{x}\in[0,1]^4$ and a monotone submodular function vector $\mb{f}$, let
$\mb{u}^\star(\mb{x},\mb{f})$ denote an optimal vertex of
$\mathcal{F}^{\mathrm{U}}(\mb{x})$. Then there exists a nonsingular
five subset basis
$
B=\{S_0,S_1,S_2,S_3,S_4\},\; S_i\subseteq N_4,
$
with corresponding affinely independent incidence vectors
$\mb{v}_0,\ldots,\mb{v}_4\in\{0,1\}^4$ and convex coefficients
$\eta_i\ge0$, $\sum_{i=0}^4\eta_i=1$, such that
\[
\mb{u}^\star(\mb{x},\mb{f})
=
\sum_{i=0}^4\eta_i\mb{e}_{S_i},
\qquad
\mb{x}
=
\sum_{i=0}^4\eta_i\mb{v}_i.
\]
where $\mb{e}_S\in\mathbb{R}^{16}$ denote the unit vector corresponding to
$S\subseteq N_4$. In particular, $\mb{x}\in\Delta_V
(B)=\operatorname{conv}\{\mb{v}_0,\ldots,\mb{v}_4\}$.
\end{corollary}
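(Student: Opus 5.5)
The plan is to combine Lemma~\ref{lem:vertex_support_characterization} with Corollary~\ref{cor:degenerate_extension} and then read off the two claimed identities from the constraints defining $\mathcal{F}_{\mathrm U}(\mb{x})$.

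First, $\mathcal{F}_{\mathrm U}(\mb{x})$ is nonempty because it contains $\mb{u}_{\mathrm{ind}}$. It is bounded, being a subset of the probability simplex in $\mathbb{R}^{16}$. So it is a nonempty polytope, and the linear objective $\mb{f}^\top\mb{u}$ attains its maximum at some vertex $\mb{u}^\star=\mb{u}^\star(\mb{x},\mb{f})$. By Lemma~\ref{lem:vertex_support_characterization}, the columns of $A$ indexed by $J=\operatorname{supp}(\mb{u}^\star)$ are linearly independent, and $|J|\le 5$.

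Next, build the basis. If $|J|=5$, take $B=J$. If $|J|<5$, the vertex is degenerate, and Corollary~\ref{cor:degenerate_extension} extends $J$ to five linearly independent columns of $A$; call the resulting collection of subsets $B\supseteq J$. In either case $A_B$ is nonsingular. The first row of $A_B$ is all ones, so nonsingularity of $A_B$ is equivalent to affine independence of the incidence vectors $\mb{v}_0,\ldots,\mb{v}_4$. Hence $B$ is a nonsingular five subset basis.

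Finally, define the coefficients by $\eta_i:=u^\star_{S_i}$ for $i=0,\ldots,4$. These vanish for $S_i\in B\setminus J$. Since $u^\star_S=0$ for every $S\notin B$, we get $\mb{u}^\star=\sum_i\eta_i\mb{e}_{S_i}$. Feasibility of $\mb{u}^\star$ then gives each remaining property directly:
\begin{itemize}
\item nonnegativity gives $\eta_i\ge0$;
\item the first row of $A\mb{u}^\star=(1,\mb{x})^\top$ gives $\sum_i\eta_i=1$;
\item the remaining four rows give $\sum_i\eta_i\mb{v}_i=\mb{x}$.
\end{itemize}
Therefore $\mb{x}\in\operatorname{conv}\{\mb{v}_0,\ldots,\mb{v}_4\}=\Delta_V(B)$.

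There is no real obstacle here. The only points needing care are that a maximizing vertex exists, which follows from the polytope being nonempty and bounded, and that the padded basis elements carry zero weight so the convex combination is unchanged. The coefficients $\eta$ are in fact uniquely determined as $A_B^{-1}(1,\mb{x})^\top$.
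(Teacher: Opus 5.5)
Your proposal is correct and follows essentially the same route as the paper. Both arguments bound the support of the optimal vertex via Lemma~\ref{lem:vertex_support_characterization}, extend it to a nonsingular five subset basis via Corollary~\ref{cor:degenerate_extension} with zero weight on the added subsets, and then read off $\sum_i\eta_i=1$ and $\mb{x}=\sum_i\eta_i\mb{v}_i$ from $A\mb{u}^\star=(1,\mb{x})^\top$. Your added remarks, on boundedness guaranteeing that an optimal vertex exists and on nonsingularity of $A_B$ being equivalent to affine independence, only make explicit what the paper leaves implicit.
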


\begin{proof}
Since $\mathcal{F}_{\mathrm U}(\mb{x})\neq \emptyset$, an optimal vertex must exist for any $\mb{x} \in [0,1]^4$. By Lemma~\ref{lem:vertex_support_characterization}, the optimal vertex
$\mb{u}^\star(\mb{x},\mb{f})$ is supported on at most five subsets. By
Corollary~\ref{cor:degenerate_extension}, this support can be extended
to a nonsingular five subset basis
$B=\{S_0,\ldots,S_4\}$. Hence
$\mb{u}^\star(\mb{x},\mb{f})=\sum_{i=0}^4\eta_i\mb{e}_{S_i}$ for convex
coefficients $\eta_i$, with $\eta_i=0$ for any added subset in the extension. Since
$A\mb{u}^\star(\mb{x},\mb{f})=(1,\mb{x})^\top$ and
$A\mb{e}_{S_i}=(1,\mb{v}_i)^\top$, we obtain
\[
\begin{pmatrix}1\\ \mb{x}\end{pmatrix}
=
\sum_{i=0}^4\eta_i
\begin{pmatrix}1\\ \mb{v}_i\end{pmatrix},
\]
and therefore $\mb{x}=\sum_{i=0}^4\eta_i\mb{v}_i$, implying that
$\mb{x}\in\Delta_V(B)$.
\end{proof}

\begin{corollary}[Coverage and overlap of nonsingular five-subset simplices]
\label{cor:nonsingular_simplices_cover}
Let $\mathcal{B}_{\mathrm{NS}}$ denote the collection of all nonsingular
five-subset bases, and let $\Delta_V(B)$ denote the incidence simplex
corresponding to each $B\in\mathcal{B}_{\mathrm{NS}}$. Then the set of all such incidences simplices form an overlapping cover of the entire marginal probability space, \emph{i.e.},
\[
[0,1]^4
=
\bigcup_{B\in\mathcal{B}_{\mathrm{NS}}}\Delta_V(B),
\]
and distinct nonsingular incidence simplices can have
non-empty intersections.
\end{corollary}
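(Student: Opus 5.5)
The two inclusions will be proved separately, and then an explicit overlapping pair will be exhibited.

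\emph{The union lies in $[0,1]^4$.} Each incidence vector $\mb{v}_S$ lies in $\{0,1\}^4\subseteq[0,1]^4$, and $[0,1]^4$ is convex. Hence every $\Delta_V(B)$ is contained in $[0,1]^4$.

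\emph{$[0,1]^4$ lies in the union.} Fix $\mb{x}\in[0,1]^4$ and choose any nonnegative monotone submodular $\mb{f}$, for instance $\mb{f}\equiv 0$ or $f(S)=\min\{|S|,1\}$. The feasible set $\mathcal{F}_{\mathrm U}(\mb{x})$ is nonempty, since it contains $\mb{u}_{\mathrm{ind}}$, and it is bounded, being a subset of the probability simplex in $\mathbb{R}^{16}$. It is a polytope in standard form, so it has a vertex, and the linear objective attains its maximum at a vertex. Applying Corollary~\ref{cor:common_convex_coefficients} to that vertex gives a nonsingular five-subset basis $B$ with $\mb{x}\in\Delta_V(B)$.

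If one prefers to avoid any reference to $\mb{f}$, there is a purely geometric route. By Carathéodory's theorem, $\mb{x}$ is a convex combination of at most five affinely independent vertices of the cube. Concretely, the standard Freudenthal triangulation places $\mb{x}$ in the simplex with vertices $\mathbf 0,\ \mb{e}_{\pi(1)},\ \mb{e}_{\pi(1)}+\mb{e}_{\pi(2)},\dots,\mathbf 1$, where $\pi$ sorts the coordinates of $\mb{x}$ in decreasing order. These five vertices are affinely independent, so they extend trivially to a nonsingular basis. Either argument suffices; I would present the LP argument because it reuses the earlier corollaries, and mention the Freudenthal chain as an explicit witness.

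\emph{Overlap.} It suffices to give two distinct bases whose simplices share a point. Take $B_1$ to be the chain $\emptyset\subset\{1\}\subset\{1,2\}\subset\{1,2,3\}\subset N_4$. Take $B_2=\{\emptyset,\{1\},\{2\},\{3\},\{4\}\}$. Both are affinely independent:
\begin{itemize}
\item for $B_1$, the differences from $\mathbf 0$ are triangular;
\item for $B_2$, the differences from $\mathbf 0$ form the identity.
\end{itemize}
Both simplices contain the vertex $\mathbf 0$, and they in fact share the segment $[\mathbf 0,\mb{e}_1]$. A full-dimensional overlap can also be exhibited: the point $(\epsilon,\epsilon/2,\epsilon/4,\epsilon/8)$ lies in the interior of both simplices for small $\epsilon$. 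This can be checked from its barycentric coordinates in each simplex, which are all positive.

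The only step needing care is the existence of an optimal vertex, which requires the nonemptiness and boundedness noted above. Everything else is routine.
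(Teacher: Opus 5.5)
Your proposal is correct and follows the paper's proof. Covering is deduced from Corollary~\ref{cor:common_convex_coefficients} applied to an optimal numerator vertex, and overlap is exhibited by pairing a chain simplex with the corner simplex $\operatorname{conv}\{\mathbf 0,\mb{e}_1,\dots,\mb{e}_4\}$; the paper uses the reversed chain $\emptyset\subset\{4\}\subset\{3,4\}\subset\{2,3,4\}\subset N_4$ (the region $x_1\le x_2\le x_3\le x_4$), which is your example up to relabeling, and your extra details make the argument slightly more complete than the paper's (the trivial inclusion $\Delta_V(B)\subseteq[0,1]^4$, the existence of an optimal vertex, the Freudenthal witness, and the explicit interior point).
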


\begin{proof}
The proof is relegated to Appendix~\ref{proof:cor_nonsingular_simplices_cover}. 
\end{proof}

We next turn to the denominator of the correlation gap and characterize pairwise independent  distributions by appending the six pairwise moment constraints to the
total probability and marginal constraints. For $S\subseteq N_4$, let
\[
\mb{w}_S
=
\left(
\mathbbm{1}_{\{1,2\}\in S},
\mathbbm{1}_{\{1,3\}\in S},
\mathbbm{1}_{\{1,4\}\in S},
\mathbbm{1}_{\{2,3\}\in S},
\mathbbm{1}_{\{2,4\}\in S},
\mathbbm{1}_{\{3,4\}\in S}
\right)^\top
\in\{0,1\}^6.
\]
denote the vector of pairwise incidences of $S$. For a marginal vector $
\mb{x}=(x_1,x_2,x_3,x_4)^\top\in[0,1]^4$, define the pairwise product vector
\[
\mb{y}(\mb{x})
=
\left(
x_1x_2,x_1x_3,x_1x_4,
x_2x_3,x_2x_4,x_3x_4
\right)^\top
\in\mathbb{R}^6.
\]
and the appended matrix
\[
A^{\mathrm P}=
\begin{bmatrix}
1 & 1 & \cdots & 1\\
\mb{v}_{\emptyset} & \mb{v}_{\{1\}} & \cdots & \mb{v}_{N_4}\\
\mb{w}_{\emptyset} & \mb{w}_{\{1\}} & \cdots & \mb{w}_{N_4}
\end{bmatrix}
\in\mathbb{R}^{11\times16}.
\]
Then, for any fixed marginal vector $\mb{x}\in[0,1]^4$, the
 feasible region of the pairwise independent extension  \eqref{eq:pairwise_concave_closure} can be expressed as:
\begin{equation}\label{eq:pairwise_feasible_region}
\begin{array}{cc}
\mathcal{F}_{\mathrm P}(\mb{x})
:=
\left\{
\mb{p}\in\mathbb{R}^{16}_+:
A^{\mathrm P}\mb{p}
=
\begin{pmatrix}
1\\
\mb{x}\\
\mb{y}(\mb{x})
\end{pmatrix}
\right\}.
\end{array}
\end{equation}
which is non-empty, since the mutually independent distribution is also pairwise independent. 
The corresponding optimal objective is
\[
f_{\mathrm P}^{+}(\mb{x})
=
\max_{\mb{p}\in\mathcal{F}_{\mathrm P}(\mb{x})}
\mb{f}^{\top}\mb{p}.
\]

\noindent We next characterize the cone of normalized monotone submodular functions
on $N_4$.

\subsection{Cone certificate equation}

Let $\mb{f}\in\mathbb{R}^{16}$ denote the vector representation of a non-negative monotone submodular 
set function $f:2^{N_4}\to\mathbb{R}_+$, with one component corresponding
to each subset of $N_4$. Monotonicity requires
\[
f(S\cup\{i\})-f(S)\ge0,
\qquad
\forall i\in N_4,\quad \forall S\subseteq N_4\setminus\{i\},
\]
which can be represented by
$\sum_{i\in N_4}|2^{N_4\setminus\{i\}}|=4\cdot2^3=32$
inequalities (where $2^S$ denotes the power set of $S$), since each element
$i\in N_4$ can be added to any subset of the remaining three elements.
Submodularity requires
\[
f(S\cup\{i\})+f(S\cup\{j\})
-f(S)-f(S\cup\{i,j\})\ge0,\quad \forall i,j\in N_4,\quad
\forall S\subseteq N_4\setminus\{i,j\},\quad i<j,
\]
which can similarly be represented by
$\sum_{i,j\in N_4,\;i<j}|2^{N_4\setminus\{i,j\}}|
=6\cdot2^2=24$
inequalities. Thus, any monotone submodular function can be represented by
$32+24=56$ inequalities, compactly written as
\[
G\mb{f}\ge0,
\qquad
G\in\{-1,0,1\}^{56\times16},
\]
where each row of $G$ is the coefficient vector of one of the
monotonicity or submodularity inequalities above. The cone of monotone
submodular functions normalized at the empty set is therefore
\[
\mathcal{C}
=
\left\{
\mb{f}\in\mathbb{R}^{16}:
G\mb{f}\ge0,\;
f(\emptyset)=0
\right\}.
\]
We next establish a cone certificate equation that provides a sufficient
condition for the desired $4/3$ bound on the correlation gap.

\begin{lemma}[Cone certificate equation]
\label{lem:cone_certificate}
Fix $\mb{x}\in[0,1]^4$ and a normalized monotone submodular function
$\mb{f}\in\mathcal{C}$. Let $\mb{u}^\star(\mb{x},\mb{f})$ denote an
optimal vertex of $\mathcal{F}_{\mathrm U}(\mb{x})$. Suppose there exists
a certificate $(\mb{p},\mb{\gamma},c)$ with
\[
\mb{p}\in\mathcal{F}_{\mathrm P}(\mb{x}),
\qquad
\mb{\gamma}\in\mathbb{R}^{56}_+,
\qquad
c\in\mathbb{R},
\]
satisfying
\begin{equation}
\frac{4}{3}\mb{p}-\mb{u}^\star
=
G^\top\mb{\gamma}
+
c\,\mb{e}_{\emptyset}.
\label{eq:cone_certificate}
\end{equation}
where $\mb{e}_{\emptyset}\in\mathbb{R}^{16}$ denotes the unit vector
corresponding to the empty set. Then
\[
f^+(\mb{x})
\le
\frac{4}{3}f^{++}(\mb{x}).
\]
\end{lemma}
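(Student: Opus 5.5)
The plan is to take the inner product of both sides of the certificate equation \eqref{eq:cone_certificate} with the function vector $\mb{f}$, and then read off the inequality from the signs of the resulting terms. Multiplying through by $\mb{f}^\top$ gives
\[
\frac{4}{3}\mb{f}^\top\mb{p}-\mb{f}^\top\mb{u}^\star
=
(G\mb{f})^\top\mb{\gamma}+c\,f(\emptyset).
\]
Since $\mb{f}\in\mathcal{C}$, we have $G\mb{f}\ge 0$, and $\mb{\gamma}\ge 0$ by assumption, so the first term on the right is nonnegative. The normalization $f(\emptyset)=0$ kills the second term, whatever the sign of $c$. This is exactly why $c$ is allowed to be free. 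Hence $\frac{4}{3}\mb{f}^\top\mb{p}\ge \mb{f}^\top\mb{u}^\star$.

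Next, I will identify the two sides with the extensions. Because $\mb{u}^\star$ is an optimal vertex of $\mathcal{F}_{\mathrm U}(\mb{x})$, we have $\mb{f}^\top\mb{u}^\star=f^+(\mb{x})$. Because $\mb{p}\in\mathcal{F}_{\mathrm P}(\mb{x})$ is feasible for \eqref{eq:pairwise_concave_closure}, we have $\mb{f}^\top\mb{p}\le f^{++}(\mb{x})$. Chaining these gives
\[
f^+(\mb{x})\le \tfrac{4}{3}\mb{f}^\top\mb{p}\le\tfrac43 f^{++}(\mb{x}),
\]
which is the claim.

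The main point to address is the normalization. Theorem~\ref{thm:n=4_main_result} concerns nonnegative monotone submodular $f$, possibly with $f(\emptyset)>0$, whereas the lemma assumes $\mb{f}\in\mathcal{C}$. I will remark that for general $f$ one can write $f=f(\emptyset)\mathbf{1}+g$ with $g\in\mathcal{C}$. Both $f^+$ and $f^{++}$ are then shifted by exactly $f(\emptyset)$, because every feasible distribution has total mass one. Given $f^+_g\le\frac43 f^{++}_g$, we get
\[
f^+=f(\emptyset)+f_g^+\le \tfrac43\bigl(f(\emptyset)+f_g^{++}\bigr)=\tfrac43 f^{++},
\]
using $f(\emptyset)\ge0$. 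So the normalized case suffices.

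Beyond this, no real obstacle remains. The only thing to check is that the sign conventions in $G$ make $G\mb{f}\ge0$ exactly encode monotonicity and submodularity, and that part is by construction.
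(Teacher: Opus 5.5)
Your argument is correct and is essentially the paper's own proof. You pair the certificate equation with $\mb{f}$ and use $G\mb{f}\ge\mb{0}$, $\mb{\gamma}\ge\mb{0}$ and $f(\emptyset)=0$ to get $\mb{f}^\top\mb{u}^\star\le\frac{4}{3}\mb{f}^\top\mb{p}$, then chain via optimality of $\mb{u}^\star$ and feasibility of $\mb{p}\in\mathcal{F}_{\mathrm P}(\mb{x})$. Your extra reduction from a general nonnegative $f$ to $\mb{f}-f(\emptyset)\mb{1}\in\mathcal{C}$ is also valid, since both feasible sets consist of probability distributions and the $G$-inequalities are shift-invariant, and it makes explicit a step the paper leaves implicit when applying the lemma to Theorem~\ref{thm:n=4_main_result}.
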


\begin{proof}
We refer to \eqref{eq:cone_certificate} as the \emph{cone certificate
equation}, since $G^\top\mb{\gamma}$, with
$\mb{\gamma}\ge \mb{0}$, belongs to the conic hull of the columns of
$G^\top$ (equivalently, the rows of $G$). Taking the inner product of
\eqref{eq:cone_certificate} with $\mb{f}$ gives
\[
\frac{4}{3}\mb{p}^{\top}\mb{f}
-
\mb{u}^{\star^{\top}}\mb{f}
=
\mb{\gamma}^{\top}G\mb{f}
+
c f(\emptyset).
\]
Since $\mb{f}$ is monotone submodular, we have
$G\mb{f}\ge \mb{0}$
and since $\mb{\gamma}\ge \mb{0}$, we have
$
\mb{\gamma}^{\top}G\mb{f}\ge0.
$
Moreover, since $\mb{f}$ is normalized, $f(\emptyset)=0$. Hence
\[
f^+(\mb{x})=\mb{u}^{\star^{\top}}\mb{f}
\le
\frac{4}{3}\mb{p}^{\top}\mb{f}
\le
\frac{4}{3}f^{++}(\mb{x}),
\]
where the second inequality follows from
$\mb{p}\in\mathcal{F}_{\mathrm P}(\mb{x})$.
\end{proof}
\begin{remark}
The scalar $c$ provides an additional degree of freedom in the cone
certificate equation \eqref{eq:cone_certificate}, without which, the vector
$\frac{4}{3}\mb{p}-\mb{u}^{\star}$ would be forced to lie in the cone generated by the columns of $G^\top$. The term $c\,\mb{e}_{\emptyset}$ relaxes the first coordinate of the
$16$-dimensional vector equation, while having no effect on the resulting
inequality because $f(\emptyset)=0$ for every normalized submodular
function.
\end{remark}

\begin{corollary}[Sufficiency of cone certificates for each nonsingular
five-subset basis]
\label{cor:sufficient_certificates}
Let $\mathcal{B}_{\mathrm{NS}}$ denote the collection of all nonsingular
five-subset bases in four dimensions. There are
$\binom{16}{5}=4,368$ five-subset bases in four dimensions, of which it can be verified that
$|\mathcal{B}_{\mathrm{NS}}|=3,008$ are nonsingular. Then:
\begin{enumerate}[label=(\roman*)]
\item Every possible combination of $\mb{x}$ and
$\mb{u}^\star(\mb{x},\mb{f})$ is associated with some
$B\in\mathcal{B}_{\mathrm{NS}}$, with
$\mb{x}\in\Delta_V(B)$ and $\mb{u}^\star(\mb{x},\mb{f})$ supported on
the corresponding five subsets.

\item To prove a $4/3$ bound on the pairwise independent correlation
gap, it suffices to establish a feasible certificate
$(\mb{p},\mb{\gamma},c)$ satisfying \eqref{eq:cone_certificate} for
every possible combination of $\mb{x}$ and optimal numerator vertex
$\mb{u}^\star(\mb{x},\mb{f})$, which  together correspond to
some $B\in\mathcal{B}_{\mathrm{NS}}$.
\end{enumerate}
\end{corollary}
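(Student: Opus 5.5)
The corollary has two parts. Part (i) is a direct consequence of results already established, and part (ii) follows by combining (i) with Lemma~\ref{lem:cone_certificate}. The count $3{,}008$ is a finite enumeration.

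\emph{The count.} Among the $\binom{16}{5}=4{,}368$ five-element subfamilies of $2^{N_4}$, we keep those whose $5\times5$ matrix $A_B$ is nonsingular. I would certify this by exact integer determinant computation over all subfamilies; the entries lie in $\{0,1\}$, so no rounding issues arise. Equivalently, we count the affinely independent five-tuples of vertices of the unit $4$-cube. This is routine enumeration. As a sanity check, the count can be cross-validated against the number of unimodular and non-unimodular full-dimensional vertex simplices of the $4$-cube, grouped by orbit under the symmetry group of the cube.

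\emph{Part (i).} Fix $\mb{x}\in[0,1]^4$ and $\mb{f}$. Since $\mathcal{F}_{\mathrm U}(\mb{x})$ is a nonempty polytope (it is bounded because $\mb{u}\ge0$ and $\sum u_S=1$), the linear objective $\mb{f}^\top\mb{u}$ attains its maximum at some vertex $\mb{u}^\star(\mb{x},\mb{f})$. Corollary~\ref{cor:common_convex_coefficients} then gives a nonsingular basis $B\in\mathcal{B}_{\mathrm{NS}}$ such that $\mb{u}^\star=\sum_i\eta_i\mb{e}_{S_i}$ and $\mb{x}=\sum_i\eta_i\mb{v}_i\in\Delta_V(B)$. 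The degenerate case, where the support has fewer than five subsets, is absorbed by Corollary~\ref{cor:degenerate_extension}: the subsets added in the extension carry zero weight. Hence $\mb{u}^\star$ is supported on subsets of $B$, which proves (i).

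\emph{Part (ii).} We want to show that certifying every pair $(B,\mb{x})$ with $\mb{x}\in\Delta_V(B)$ suffices. Take an arbitrary $\mb{f}$, which we may assume lies in $\mathcal{C}$. We reduce the nonnegative case to the normalized one by subtracting $f(\emptyset)$. This shifts $f^+$ and $f^{++}$ by the same constant $f(\emptyset)\ge0$, and since $4/3\ge1$, the bound $f^+-f(\emptyset)\le\frac43(f^{++}-f(\emptyset))$ implies $f^+\le\frac43 f^{++}$. By (i), $(\mb{x},\mb{u}^\star)$ corresponds to some $B$ with $\mb{u}^\star=\sum\eta_i\mb{e}_{S_i}$, where $\eta=A_B^{-1}(1,\mb{x})^\top$. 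Here the nonsingularity of $A_B$ is exactly what makes $\eta$, and therefore $\mb{u}^\star$, a function of $(B,\mb{x})$ alone. A certificate $(\mb{p},\mb{\gamma},c)$ for this pair, which depends only on $B$ and $\mb{x}$ and is valid for all of $\mathcal{C}$, then yields $f^+(\mb{x})\le\frac43 f^{++}(\mb{x})$ through Lemma~\ref{lem:cone_certificate}.

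\emph{Main obstacle.} The only delicate point is that the optimal vertex, and hence $B$, depends on the unknown $\mb{f}$. The argument must therefore quantify over all $B$ whose simplex contains $\mb{x}$, rather than over one $B$ per $\mb{x}$. This is why certificates are required for every $B\in\mathcal{B}_{\mathrm{NS}}$ over all of $\Delta_V(B)$, and why Corollary~\ref{cor:nonsingular_simplices_cover} guarantees that the covering loses nothing.
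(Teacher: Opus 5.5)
Your proposal is correct and follows the paper's proof. Part (i) comes from Corollary~\ref{cor:common_convex_coefficients} with the zero-weight extension of Corollary~\ref{cor:degenerate_extension}, part (ii) applies Lemma~\ref{lem:cone_certificate} to the resulting pair $(B,\mb{x})$, and the count $3{,}008$ is left to direct enumeration, as in the paper.

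Your two additions are valid refinements that the paper leaves implicit: reducing nonnegative $f$ to normalized $f$ by subtracting $f(\emptyset)$, and noting that nonsingularity of $A_B$ makes $\mb{u}^\star$ a function of $(B,\mb{x})$ alone. The normalization step could even be skipped. Summing coordinates in \eqref{eq:cone_certificate} forces $c=1/3$, because $\mb{p}$ and $\mb{u}^\star$ each sum to one and every row of $G$ sums to zero. Hence $c\,f(\emptyset)\ge 0$ for every nonnegative $f$.
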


\begin{proof}
For any $\mb{x}\in[0,1]^4$ and $\mb{f}\in\mathcal{C}$, let
$\mb{u}^\star(\mb{x},\mb{f})$ denote the optimal numerator vertex.
By Lemma~\ref{lem:vertex_support_characterization},
 every
vertex of $\mathcal{F}_{\mathrm U}(\mb{x})$, and hence
$\mb{u}^\star(\mb{x},\mb{f})$, corresponds to a nonsingular five-subset
basis. If $\mb{u}^\star(\mb{x},\mb{f})$ is degenerate, Corollary~\ref{cor:degenerate_extension}
allows its support to be extended to a nonsingular five-subset basis
$B\in\mathcal{B}_{\mathrm{NS}}$, with zero probability assigned to any
added subsets. By Corollary~\ref{cor:common_convex_coefficients}, the
same convex coefficients representing
$\mb{u}^\star(\mb{x},\mb{f})$ on this basis represent $\mb{x}$ as a
convex combination of the corresponding incidence vectors and hence
$\mb{x}\in\Delta_V(B)$. Since the incidence simplices
$\Delta_V(B)$, $B\in\mathcal{B}_{\mathrm{NS}}$, cover $[0,1]^4$ by
Corollary~\ref{cor:nonsingular_simplices_cover}, every possible combination of $\mb{x}$ and $\mb{u}^\star(\mb{x},\mb{f})$  is covered by some
$B\in\mathcal{B}_{\mathrm{NS}}$. Therefore, it suffices to establish
the certificate for every such combination corresponding to a nonsingular five subset basis.
\end{proof}
\begin{remark}
We note that in item~(ii) of Corollary \ref{cor:sufficient_certificates}, no separate verification over $\mb{f}\in\mathcal{C}$ is
necessary to prove the $4/3$ bound.  This follows from the inner product argument in the proof of Lemma~\ref{lem:cone_certificate}, which ensures
that any certificate satisfying \eqref{eq:cone_certificate} for the
corresponding $\mb{x}$ and $\mb{u}^\star(\mb{x},\mb{f})$ automatically establishes the
$4/3$ bound for every normalized monotone submodular function
$\mb{f}\in\mathcal{C}$.
\end{remark}
Although $\mathcal{B}_{\mathrm{NS}}$ is finite, each simplex
$\Delta_V(B)$ contains a continuum of marginal probability vectors and
hence infinitely many possible combinations of $\mb{x}$ and
$\mb{u}^\star(\mb{x},\mb{f})$, each of which, by Corollary \ref{cor:sufficient_certificates}, would need a cone certificate. We overcome this challenge by using a
Bernstein basis representation, which allows the cone certificate to be
verified simultaneously for every $\mb{x}$ in the simplex, avoiding
verification for each $\mb{x}$ separately.

\subsection{Bernstein basis functions}

For any $\mb{x}\in[0,1]^4$ and corresponding optimal numerator vertex
$\mb{u}^\star(\mb{x},\mb{f})$, Corollary~\ref{cor:common_convex_coefficients}
ensures the existence of a nonsingular five-subset basis
$B=\{S_0,\ldots,S_4\}$ with the corresponding 
incidence simplex $\Delta_V
(B)=\operatorname{conv}\{\mb{v}_0,\ldots,\mb{v}_4\}$ and common convex
coefficients $\eta_0,\ldots,\eta_4$ such that
\[
\mb{x}(\mb{\eta})
=
\sum_{i=0}^4\eta_i\mb{v}_i \in \Delta_V,
\qquad
\mb{u}^\star(\mb{\eta})
=
\sum_{i=0}^4\eta_i\mb{e}_{S_i},
\qquad
\sum_{i=0}^4\eta_i=1,\quad \eta_i\ge0.
\]
Thus, each marginal $x_i$ is affine in $\mb{\eta}$, each
pairwise moment $x_ix_j$ is quadratic in $\mb{\eta}$, and the
optimal numerator distribution $\mb{u}^\star$ is affine in
$\mb{\eta}$. This polynomial dependence on
$\mb{\eta}$ allows the cone certificate to be represented and
verified using Bernstein basis functions.
A degree-two
Bernstein basis
(see \citep{Bernstein1912,Farouki2012}) provides a convenient way to express this dependence and reduce verification
over an entire simplex $\Delta_{V}$ to finitely many coefficient conditions. Specifically, any polynomial $h(\mb{\eta})$ of degree at most two
can be written as
\[
h(\mb{\eta})
=
\sum_{0\le a\le b\le4}
h^{ab}B_{ab}(\mb{\eta}),
\]
where
\[
B_{aa}(\mb{\eta})=\eta_a^2,
\qquad
B_{ab}(\mb{\eta})=2\eta_a\eta_b,
\quad 0\le a\le b\le4.
\]
are the $15$ Bernstein basis functions with Bernstein coefficients $h^{ab}$. Thus, for each simplex $\Delta_{V}$ and $\mb{x} \in \Delta_{V}$, the marginal and pairwise moment vector 
\[
\mb{r}(\mb{\eta})
=
\begin{pmatrix}
1\\
\mb{x}(\mb{\eta})\\
\mb{y}(\mb{\eta})
\end{pmatrix}
\]
in the moment equations of \eqref{eq:pairwise_feasible_region} and the optimal vertex distribution $\mb{u}^*(\mb{\eta})$ in the cone certificate equation \eqref{eq:cone_certificate} can both be
represented in this basis, with the Bernstein coefficients as derived in the next Lemma.
\begin{lemma}[Bernstein coefficients of the marginal moments, pairwise moments, and optimal vertex]
\label{lem:bernstein_coefficients}\qquad
For a given $\mb{x}\in[0,1]^4$ and a corresponding optimal numerator
vertex $\mb{u}^\star(\mb{x},\mb{f})$, let
$B=\{S_0,\ldots,S_4\}$ denote the corresponding nonsingular five-subset
basis, with incidence vectors $\mb{v}_0,\ldots,\mb{v}_4$. Then the degree-two Bernstein coefficients of
$\mb{x}(\mb{\eta})$, $\mb{y}(\mb{\eta})$, and
$\mb{u}^\star(\mb{\eta})$ satisfying
\[
\mb{x}(\mb{\eta})
=
\sum_{i=0}^4\eta_i\mb{v}_i,
\qquad
\mb{u}^\star(\mb{\eta})
=
\sum_{i=0}^4\eta_i\mb{e}_{S_i},
\qquad
\sum_{i=0}^4\eta_i=1,\quad \eta_i\ge0
\] are given by
\begin{equation}
\label{eq:bernstein_coefficients_xyu}
\begin{array}{llllll}
\mb{x}^{aa} &=& \mb{v}_a,
&
\mb{x}^{ab} = \dfrac{\mb{v}_a+\mb{v}_b}{2},
&& a<b,\\[0.3em]
y_{ij}^{aa} &=& v_{ai}v_{aj},
&
y_{ij}^{ab}
=
\dfrac{v_{ai}v_{bj}+v_{bi}v_{aj}}{2},
&& a<b,\quad 1\le i<j\le4,\\[0.3em]
\mb{u}^{{\star}^{aa}} &=& \mb{e}_{S_a},
&
\mb{u}^{{\star}^{ab}}
=
\dfrac{\mb{e}_{S_a}+\mb{e}_{S_b}}{2},
&& a<b.
\end{array}
\end{equation}
\end{lemma}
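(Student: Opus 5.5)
The plan is to homogenize each quantity to degree two using $\sum_{k=0}^4\eta_k=1$ on the simplex, and then read off its coefficients in the basis $\{B_{ab}\}$. First, for an affine quantity $\mb{z}(\mb{\eta})=\sum_{a}\eta_a\mb{z}_a$ (here $\mb{z}_a=\mb{v}_a$ for $\mb{x}$, or $\mb{z}_a=\mb{e}_{S_a}$ for $\mb{u}^\star$), I will multiply by $1=\sum_b\eta_b$:
\[
\mb{z}(\mb{\eta})=\sum_{a=0}^4\sum_{b=0}^4\eta_a\eta_b\,\mb{z}_a
=\sum_{a}\eta_a^2\mb{z}_a+\sum_{a<b}\eta_a\eta_b(\mb{z}_a+\mb{z}_b)
=\sum_a \mb{z}_a B_{aa}(\mb{\eta})+\sum_{a<b}\frac{\mb{z}_a+\mb{z}_b}{2}B_{ab}(\mb{\eta}).
\]
Splitting the double sum into diagonal and off-diagonal terms and pairing $(a,b)$ with $(b,a)$ gives the $\mb{x}^{ab}$ and $\mb{u}^{\star ab}$ rows of \eqref{eq:bernstein_coefficients_xyu}.

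Second, for the pairwise moments, $x_i(\mb{\eta})=\sum_a\eta_a v_{ai}$ is already homogeneous of degree one. Hence
\[
y_{ij}(\mb{\eta})=x_ix_j=\sum_{a,b}\eta_a\eta_b\,v_{ai}v_{bj}
=\sum_a v_{ai}v_{aj}\,\eta_a^2+\sum_{a<b}(v_{ai}v_{bj}+v_{bi}v_{aj})\,\eta_a\eta_b,
\]
and rewriting $\eta_a\eta_b=\tfrac12 B_{ab}$ yields the stated $y^{ab}_{ij}$. The first coordinate $1$ of $\mb{r}$ is handled the same way: its coefficient is $1$ for every block.

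Third, I will address uniqueness, so that these are \emph{the} Bernstein coefficients. The $15$ functions $B_{ab}$ are the monomials $\eta_a\eta_b$ up to positive scaling. They are linearly independent as homogeneous quadratics, and they remain independent as functions on the simplex, because a homogeneous quadratic vanishing on the simplex vanishes on the cone it spans, which has nonempty interior in $\mathbb{R}^5$. The homogenized representation is therefore unique.

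The only step needing care is this uniqueness point, namely that homogenization via $\sum\eta_k=1$ is legitimate and that the coefficients are then well defined. The remaining steps are routine algebra.
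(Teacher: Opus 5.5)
Your proposal is correct and follows essentially the same approach as the paper's proof. Both homogenize the affine quantities $\mb{x}$ and $\mb{u}^\star$ by multiplying by $\sum_k\eta_k=1$, expand $x_ix_j$ as a product of two linear forms, and split into diagonal and off-diagonal terms using $B_{aa}=\eta_a^2$ and $B_{ab}=2\eta_a\eta_b$; your two additional remarks are valid and fill in points the paper leaves implicit, namely that the constant coordinate of $\mb{r}$ has coefficient $1$ in every block and that the coefficients are unique because a homogeneous quadratic vanishing on the simplex vanishes on the nonnegative orthant.
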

\begin{proof}
Since $\sum_{i=0}^4\eta_i=1$, the affine representation of $\mb{x}$ can be
written as
\[
\mb{x}(\mb{\eta})
=
\left(\sum_{i=0}^4\eta_i\mb{v}_i\right)
\left(\sum_{j=0}^4\eta_j\right)
=
\sum_{i=0}^4\eta_i^2\mb{v}_i
+
\sum_{i<j}\eta_i\eta_j(\mb{v}_i+\mb{v}_j).
\]
Using $B_{ii}(\mb{\eta})=\eta_i^2$ and
$B_{ij}(\mb{\eta})=2\eta_i\eta_j$ for $i<j$ gives
\[
\mb{x}(\mb{\eta})
=
\sum_{i=0}^4B_{ii}(\mb{\eta})\mb{v}_i
+
\sum_{i<j}B_{ij}(\mb{\eta})
\frac{\mb{v}_i+\mb{v}_j}{2},
\]
which yields the stated coefficients of $\mb{x}$. Similarly,
\[
y_{ij}=x_i x_j
=
\left(\sum_{k=0}^4\eta_kv_{ki}\right)
\left(\sum_{\ell=0}^4\eta_\ell v_{\ell j}\right),
\]
and collecting the diagonal and off-diagonal terms gives the stated coefficients of $\mb{y}$. Finally, similar to $\mb{x}$, the coefficients
of $\mb{u}^\star$ follow from
\[
\mb{u}^\star(\mb{\eta})
=
\left(\sum_{i=0}^4\eta_i\mb{e}_{S_i}\right)
\left(\sum_{j=0}^4\eta_j\right),
\]
which yields the stated coefficients of $\mb{u}^\star$.
\end{proof}

We next seek Bernstein coefficients
$\mb{p}^{ab}$, $\mb{\gamma}^{ab}$, and $c^{ab}$ such that the
moment equations in \eqref{eq:pairwise_feasible_region} and the cone
certificate equation in \eqref{eq:cone_certificate} hold coefficientwise
for each $0\le a\le b\le4$. This coefficientwise feasibility is a
sufficient condition for the corresponding polynomial identities to hold
for every $\mb{\eta}$ in the simplex. Specifically, we represent
the certificate variables as

\begin{equation}\label{eq:bernstein_coefficients_p_lambda_c}
\mb{p}(\mb{\eta})
=
\sum_{0\le a\le b\le4}
\mb{p}^{ab}B_{ab}(\mb{\eta}),
\qquad
\mb{\gamma}(\mb{\eta})
=
\sum_{0\le a\le b\le4}
\mb{\gamma}^{ab}B_{ab}(\mb{\eta}),\qquad
c(\mb{\eta})
=
\sum_{0\le a\le b\le4}
c^{ab}B_{ab}(\mb{\eta}),
\end{equation}
where
\[
\mb{p}^{ab}\in \mathbb{R}^{16}_+,
\qquad
\mb{\gamma}^{ab}\in \mathbb{R}^{56}_+,
\qquad
c^{ab}\in\mathbb{R}.
\]

\begin{lemma}[Bernstein coefficient system]
\label{lem:bernstein_coefficient_system}
Fix $\mb{x}\in[0,1]^4$ and a corresponding optimal numerator
vertex $\mb{u}^\star(\mb{x},\mb{f})$, represented on its associated
nonsingular five-subset basis by
\[
\mb{x}(\mb{\eta})
=
\sum_{i=0}^4\eta_i\mb{v}_i,
\qquad
\mb{u}^\star(\mb{\eta})
=
\sum_{i=0}^4\eta_i\mb{e}_{S_i},
\qquad
\sum_{i=0}^4\eta_i=1,\quad \eta_i\ge0.
\]
For each $0\le a\le b\le4$, let
$\mb{p}^{ab}$,
$\mb{\gamma}^{ab}$, and $c^{ab}$ denote the Bernstein
coefficients of $\mb{p}(\mb{\eta})$, $\mb{\gamma}(\mb{\eta})$, and $c$, respectively from \eqref{eq:bernstein_coefficients_p_lambda_c}.
A sufficient condition for the moment equations in
\eqref{eq:pairwise_feasible_region} and the cone certificate equation
\eqref{eq:cone_certificate} to hold for every
$\mb{\eta}$ is that, for every $0\le a\le b\le4$,
\begin{equation}\label{eq:coeff_wise_27_equations_73_variables}
\begin{aligned}
A_{\mathrm P}\mb{p}^{ab}
&= \mb{r}^{ab},\\
\frac{4}{3}\mb{p}^{ab}
-
G^\top\mb{\gamma}^{ab}
-
c^{ab}\mb{e}_{\emptyset}
&= \mb{u}^{{\star}^{ab}}\\
\mb{p}^{ab},\mb{\gamma}^{ab} 
&\ge \mb{0}.
\end{aligned}
\end{equation}
where $\mb{r}^{ab}$ and $\mb{u}^{{\star}^{ab}}$ are the degree-two
Bernstein coefficients of
$\mb{r}(\mb{\eta})$
and $
\mb{u}^\star(\mb{\eta})$ respectively from
\eqref{eq:bernstein_coefficients_xyu}.
Thus, for each pair $(a,b)$, the coefficient system consists of
$11+16=27$ linear equations in
$16+56+1=73$ unknowns
$(\mb{p}^{ab},\mb{\gamma}^{ab},c^{ab})$.
\end{lemma}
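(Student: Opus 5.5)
The plan is to assemble the polynomial identities from the coefficientwise equations using linearity of the Bernstein representation and nonnegativity of the Bernstein basis on the simplex. Suppose that for every $0\le a\le b\le 4$ we have $(\mb{p}^{ab},\mb{\gamma}^{ab},c^{ab})$ satisfying \eqref{eq:coeff_wise_27_equations_73_variables}, and define $\mb{p}(\mb{\eta})$, $\mb{\gamma}(\mb{\eta})$, $c(\mb{\eta})$ by \eqref{eq:bernstein_coefficients_p_lambda_c}. First, multiply the first block of equations by $B_{ab}(\mb{\eta})$ and sum over all $15$ pairs. This gives
\[
A_{\mathrm P}\mb{p}(\mb{\eta})=\sum_{a\le b}\mb{r}^{ab}B_{ab}(\mb{\eta})=\mb{r}(\mb{\eta}),
\]
where the last equality is Lemma~\ref{lem:bernstein_coefficients}. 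That lemma gives exactly the Bernstein expansions of the constant $1$ (through $\sum_a\eta_a=1$, so $1=(\sum\eta_a)^2$ has all coefficients equal to one), of $\mb{x}(\mb{\eta})$, and of $\mb{y}(\mb{\eta})$. The only point to verify is that the constant row expands to $\sum_{a\le b}B_{ab}(\mb{\eta})=(\sum_a\eta_a)^2=1$, which holds because $B_{ab}=2\eta_a\eta_b$ for $a<b$.

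Second, I do the same with the second block of equations, which yields
\[
\tfrac43\mb{p}(\mb{\eta})-G^\top\mb{\gamma}(\mb{\eta})-c(\mb{\eta})\mb{e}_\emptyset=\sum_{a\le b}\mb{u}^{\star ab}B_{ab}(\mb{\eta})=\mb{u}^\star(\mb{\eta}),
\]
again by Lemma~\ref{lem:bernstein_coefficients}. This is precisely the cone certificate equation \eqref{eq:cone_certificate} evaluated at $\mb{\eta}$.

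Third, I establish the sign constraints. For $\mb{\eta}$ in the standard simplex ($\eta_i\ge0$, $\sum\eta_i=1$), every $B_{ab}(\mb{\eta})$ is nonnegative. Since $\mb{p}^{ab}\ge\mb{0}$ and $\mb{\gamma}^{ab}\ge\mb{0}$, it follows that $\mb{p}(\mb{\eta})\ge\mb{0}$ and $\mb{\gamma}(\mb{\eta})\in\mathbb{R}^{56}_+$. Combined with the first step, this gives $\mb{p}(\mb{\eta})\in\mathcal{F}_{\mathrm P}(\mb{x}(\mb{\eta}))$, while $c(\mb{\eta})$ is unrestricted. Hence $(\mb{p}(\mb{\eta}),\mb{\gamma}(\mb{\eta}),c(\mb{\eta}))$ is a valid certificate in the sense of Lemma~\ref{lem:cone_certificate} for every $\mb{\eta}$ in the simplex, and in particular for the $\mb{\eta}$ attached to the given $\mb{x}$ and $\mb{u}^\star$ by Corollary~\ref{cor:common_convex_coefficients}.

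Finally, the count is immediate: $A_{\mathrm P}$ contributes $11$ rows and the certificate equation $16$, giving $27$ equations, and the unknowns number $16+56+1=73$. I do not expect a real obstacle. The only care needed is to keep the argument restricted to $\mb{\eta}$ in the simplex, where both the nonnegativity of the basis functions and the identity $\sum\eta_a=1$, which is used to homogenize the affine terms to degree two, are valid. I will also remark that the condition is only sufficient: nonnegativity of a polynomial on the simplex does not force nonnegativity of its degree-two Bernstein coefficients. This is why subdivision is needed later.
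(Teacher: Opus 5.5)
Your proposal is correct and follows the paper's proof: you multiply each coefficient equation by $B_{ab}(\mb{\eta})$, sum over the $15$ blocks using Lemma~\ref{lem:bernstein_coefficients}, and pass nonnegativity from $\mb{p}^{ab},\mb{\gamma}^{ab}$ to $\mb{p}(\mb{\eta}),\mb{\gamma}(\mb{\eta})$. You also state explicitly two points the paper leaves implicit: the constant row has every Bernstein coefficient equal to one, since $\sum_{a\le b}B_{ab}(\mb{\eta})=(\sum_a\eta_a)^2=1$, and $B_{ab}(\mb{\eta})\ge0$ on the simplex is what transfers the sign constraints.
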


\begin{proof}
Substituting the Bernstein representations of
$\mb{p}(\mb{\eta})$,
$\mb{\gamma}(\mb{\eta})$,
$c(\mb{\eta})$,
$\mb{r}(\mb{\eta})$, and
$\mb{u}^\star(\mb{\eta})$, and using the coefficient equations in \eqref{eq:coeff_wise_27_equations_73_variables}, we obtain
\[
\begin{aligned}
A_{\mathrm P}\mb{p}(\mb{\eta})
&=
\sum_{0\le a\le b\le4}
A_{\mathrm P}\mb{p}^{ab}B_{ab}(\mb{\eta})
=
\sum_{0\le a\le b\le4}
\mb{r}^{ab}B_{ab}(\mb{\eta})
=
\mb{r}(\mb{\eta}),\\
\frac{4}{3}\mb{p}(\mb{\eta})
-
G^\top\mb{\gamma}(\mb{\eta})
-
c(\mb{\eta})\mb{e}_{\emptyset}
&=
\sum_{0\le a\le b\le4}
\left(
\frac{4}{3}\mb{p}^{ab}
-
G^\top\mb{\gamma}^{ab}
-
c^{ab}\mb{e}_{\emptyset}
\right)
B_{ab}(\mb{\eta})\\
&=
\sum_{0\le a\le b\le4}
\mb{u}^{{\star}^{ab}}B_{ab}(\mb{\eta})
=
\mb{u}^\star(\mb{\eta}).
\end{aligned}
\]
 Hence, the first two equation systems in \eqref{eq:coeff_wise_27_equations_73_variables} along with the non-negativity conditions $\mb{p}^{ab},\mb{\gamma}^{ab} \ge \mb{0}$ are sufficient to ensure that $\mb{p}(\mb{\eta})\in \mathcal{F}^{\mathrm{P}}(\mb{x})$ and $
\mb{\gamma}(\mb{\eta})\in \mathbb{R}^{56}_+$. This ensures that the moment equations in \eqref{eq:pairwise_feasible_region} and the cone certificate equation
\eqref{eq:cone_certificate} hold for every
$\mb{\eta}$ and therefore for every marginal probability
vector $\mb{x}$ in the corresponding simplex $\Delta_V$ whose optimal numerator
vertex is $\mb{u}^\star$.
\end{proof}
The Bernstein coefficient system need not be solved separately for all
$3,008$ nonsingular five-subset bases, since many bases are equivalent
up to a permutation of the elements of $N_4$. We next introduce the
concept of an \emph{orbit} to capture this equivalence.
\subsection{Permutation Orbits}\label{subsec:permutation_orbits}
\begin{definition}
Let $\Pi_4$ denote the symmetric group of all $4!=24$ permutations
of the elements of $N_4$. An \emph{orbit} is a collection of nonsingular five-subset
bases that are equivalent under permutations $\pi \in \Pi_4$. Specifically, two bases
$
B=\{S_0,\ldots,S_4\}
\quad\text{and}\quad
B'=\{S'_0,\ldots,S'_4\}
$
belong to the same orbit if there exists a permutation $\pi$ of $N_4$
such that
$
B'=\{\pi(S_0),\ldots,\pi(S_4)\},
$
where $\pi(S)=\{\pi(i):i\in S\}$.
\end{definition}
\noindent For example, the bases $B=\{\emptyset,\{1\},\{1,2\},\{1,2,3\},N_4\}$ and $B'=\{\emptyset,\{2\},\{1,2\},\{1,2,3\},N_4\}$ are equivalent under the permutation $\pi\in\Pi_4$ defined by $\pi(1)=2,\ \pi(2)=1,\ \pi(3)=3,\ \pi(4)=4$, and hence belong to the same orbit.
The next lemma shows that the Bernstein coefficient system is invariant
under permutations in $\Pi_4$, so that a feasible coefficient system for
one basis induces a feasible coefficient system for every basis in the
same orbit.
\begin{lemma}[Permutation invariance of the Bernstein coefficient system]
\label{lem:symmetry_permutation_invariance}
Let $\mathcal{O}$ be an orbit of nonsingular five-subset bases, and let
$\Delta^{\mathcal{O}}$ denote a representative simplex of the orbit.
For any $\pi\in\Pi_4$, let $\Delta_{\pi}^{\mathcal{O}}$ denote the
simplex obtained from $\Delta^{\mathcal{O}}$ by applying $\pi$. Then the
Bernstein coefficient system
\eqref{eq:coeff_wise_27_equations_73_variables} is invariant under
permutation. In particular, if
$(\mb{p}^{ab},\mb{\gamma}^{ab},c^{ab})$ is a feasible solution
of the coefficient system for $\Delta^{\mathcal{O}}$, then the
appropriately permuted coefficients
$
\left(\mb{p}^{ab}_{\pi},\mb{\gamma}^{ab}_{\pi},c^{ab}\right)
$
provide a feasible solution for $\Delta_{\pi}^{\mathcal{O}}$.
\end{lemma}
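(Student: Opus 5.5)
The plan is to make precise how a permutation $\pi\in\Pi_4$ acts on every object appearing in \eqref{eq:coeff_wise_27_equations_73_variables}, and then check that each equation and each sign constraint is carried into the corresponding one for the permuted basis. Concretely, $\pi$ induces a permutation matrix $P_\pi\in\{0,1\}^{16\times16}$ on subset-indexed vectors, $(P_\pi \mb{z})_{\pi(S)}=z_S$. It also induces a $4\times4$ permutation $Q_\pi$ on marginal coordinates, $(Q_\pi\mb{x})_{\pi(i)}=x_i$, and a $6\times6$ permutation $R_\pi$ on pair coordinates, sending the pair $\{i,j\}$ to $\{\pi(i),\pi(j)\}$. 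Finally, it permutes the $56$ monotonicity and submodularity inequalities, giving a $56\times56$ permutation $T_\pi$. For the permuted basis $B_\pi=\{\pi(S_0),\ldots,\pi(S_4)\}$ I keep the same labelling $a=0,\ldots,4$ of the basis elements. Then the Bernstein indices $(a,b)$ match, and the convex coefficients $\mb{\eta}$ are unchanged. The candidate solution is $\mb{p}^{ab}_\pi=P_\pi\mb{p}^{ab}$, $\mb{\gamma}^{ab}_\pi=T_\pi\mb{\gamma}^{ab}$, and $c^{ab}$ unchanged.

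The first step is to establish three equivariance identities. First, $A^{\mathrm P}P_\pi=\operatorname{diag}(1,Q_\pi,R_\pi)A^{\mathrm P}$. This holds because $\mb{v}_{\pi(S)}=Q_\pi\mb{v}_S$ and $\mb{w}_{\pi(S)}=R_\pi\mb{w}_S$, which is immediate from the definitions of the incidence vectors. Second, $G P_\pi^{-1}=T_\pi^{-1} G$, or equivalently $P_\pi G^\top=G^\top T_\pi$. Each row of $G$ is a monotonicity difference $f(S\cup\{i\})-f(S)$ or a submodularity expression $f(S\cup\{i\})+f(S\cup\{j\})-f(S)-f(S\cup\{i,j\})$. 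Relabelling by $\pi$ maps each such row to another row of the same type, so the row set of $G$ is closed under $\pi$ and $T_\pi$ is a genuine permutation. Third, $P_\pi\mb{e}_\emptyset=\mb{e}_\emptyset$, since $\pi(\emptyset)=\emptyset$.

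The second step is to transport the Bernstein data using Lemma~\ref{lem:bernstein_coefficients}. For $B_\pi$ the incidence vectors are $Q_\pi\mb{v}_a$, so $\mb{x}^{ab}_\pi=Q_\pi\mb{x}^{ab}$. The pair products satisfy $(Q_\pi\mb{v}_a)_{\pi(i)}(Q_\pi\mb{v}_b)_{\pi(j)}=v_{ai}v_{bj}$, so $\mb{y}^{ab}_\pi=R_\pi\mb{y}^{ab}$. For the numerator vertex, $\mb{e}_{\pi(S_a)}=P_\pi\mb{e}_{S_a}$ gives $\mb{u}^{\star ab}_\pi=P_\pi\mb{u}^{\star ab}$. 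Hence $\mb{r}^{ab}_\pi=\operatorname{diag}(1,Q_\pi,R_\pi)\mb{r}^{ab}$.

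The third step substitutes into the system. Applying $\operatorname{diag}(1,Q_\pi,R_\pi)$ to $A^{\mathrm P}\mb{p}^{ab}=\mb{r}^{ab}$ yields $A^{\mathrm P}P_\pi\mb{p}^{ab}=\mb{r}^{ab}_\pi$. Applying $P_\pi$ to the cone equation yields $\tfrac43P_\pi\mb{p}^{ab}-G^\top T_\pi\mb{\gamma}^{ab}-c^{ab}\mb{e}_\emptyset=\mb{u}^{\star ab}_\pi$. Nonnegativity is preserved because $P_\pi$ and $T_\pi$ are permutation matrices. Together with Lemma~\ref{lem:bernstein_coefficient_system}, this shows that feasibility for the representative simplex $\Delta^{\mathcal O}$ implies feasibility for $\Delta^{\mathcal O}_\pi$ and every $\mb{x}$ in it. Nonsingularity of $B_\pi$ follows from $A_{B_\pi}=\operatorname{diag}(1,Q_\pi)A_B$, so $B_\pi$ is again in $\mathcal{B}_{\mathrm{NS}}$.

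The only step needing real care is the second identity, $P_\pi G^\top=G^\top T_\pi$, that is, closure of the row set of $G$ under relabelling. One subtlety is that submodularity rows are indexed by unordered pairs with the convention $i<j$. The image row should therefore be identified via the pair $\{\pi(i),\pi(j)\}$, and the expression is symmetric in $i,j$, so no sign issue arises. I would verify this by a direct case check on the two row types. The sign conventions for $P_\pi$ versus $P_\pi^{-1}$ need to be kept consistent throughout. Apart from that the argument is bookkeeping.
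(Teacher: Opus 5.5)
Your proposal is correct and follows essentially the same route as the paper. Both arguments induce permutation matrices on the $16$ subsets, the $56$ monotonicity/submodularity rows and the $11$ moment coordinates, establish the equivariance identities $A^{\mathrm P}P_\pi=\operatorname{diag}(1,Q_\pi,R_\pi)A^{\mathrm P}$, $P_\pi G^\top=G^\top T_\pi$ and $P_\pi\mb{e}_{\emptyset}=\mb{e}_{\emptyset}$, and then push the feasible coefficients through the system; your version is simply more explicit, deriving the transport of $\mb{r}^{ab}$ and $\mb{u}^{\star ab}$ from Lemma~\ref{lem:bernstein_coefficients} and checking nonnegativity, the $i<j$ pair-ordering convention, and nonsingularity of the permuted basis.
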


\begin{proof}
    The proof is relegated to Appendix~\ref{proof:lem_symmetry_permutation_invariance}.
\end{proof}
\begin{corollary}[Reduction to permutation orbits]
\label{cor:permutation_orbits}
The $3,008$ nonsingular five-subset bases in
$\mathcal{B}_{\mathrm{NS}}$ partition into $183$ representative orbits under
permutations $\pi\in\Pi_4$, as shown in Table~\ref{tab:orbit_subdivision_statistics} of Appendix \ref{appendix_sec:orbit_representatives_and_subdivision_stats}. Hence, for each coefficient block $(a,b)$, by
Lemma~\ref{lem:symmetry_permutation_invariance}, it suffices to
establish the Bernstein coefficient system
\eqref{eq:coeff_wise_27_equations_73_variables} for one representative
basis from each of the $183$ orbits.
\end{corollary}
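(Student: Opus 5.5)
The statement has two parts: an enumeration claim (the 3,008 nonsingular bases fall into 183 orbits) and a reduction claim (one representative per orbit suffices). I will prove them separately.

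\emph{The orbit count.} I would first check that $\Pi_4$ acts on $\mathcal{B}_{\mathrm{NS}}$ at all. A permutation $\pi$ acts on incidence vectors by a coordinate permutation $P_\pi$, so $\mb{v}_{\pi(S)}=P_\pi\mb{v}_S$. Then
\[
A_{\pi(B)}=\begin{bmatrix}1&\mb{0}^\top\\ \mb{0}&P_\pi\end{bmatrix}A_B
\]
up to a reordering of columns. Hence $\det A_{\pi(B)}=\pm\det A_B$, and nonsingularity is preserved. The 183 orbits partition $\mathcal{B}_{\mathrm{NS}}$ because orbits of a group action are equivalence classes. The count itself I would certify in two independent ways:
\begin{itemize}
\item by direct computation, enumerating the $\binom{16}{5}$ five-subsets, keeping those with $\det A_B\neq0$, and computing a canonical form, namely the lexicographically minimal image over the 24 permutations;
\item by Burnside's lemma, $|\mathcal{B}_{\mathrm{NS}}/\Pi_4|=\frac{1}{24}\sum_{\pi}|\mathrm{Fix}(\pi)|$, which reduces the count to five conjugacy-class computations of nonsingular bases fixed by $\pi$.
\end{itemize}
Both should give 183, matching Table~\ref{tab:orbit_subdivision_statistics}.

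\emph{The reduction.} Fix an orbit $\mathcal{O}$ with representative $B$, and let $B'=\pi(B)$ for some $\pi\in\Pi_4$. Label the subsets of $B'$ as $S'_i=\pi(S_i)$, so that the Bernstein block indices $(a,b)$ correspond. By Lemma~\ref{lem:symmetry_permutation_invariance}, any feasible solution of \eqref{eq:coeff_wise_27_equations_73_variables} for $(B,a,b)$ transforms into a feasible solution for $(B',a,b)$. The transformation reindexes $\mb{p}^{ab}$ over subsets by $S\mapsto\pi(S)$ and reindexes the 56 rows of $G$ (monotonicity and submodularity inequalities) by the induced permutation; $c^{ab}$ is unchanged.

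Feasibility is preserved for three reasons:
\begin{itemize}
\item The right-hand sides $\mb{r}^{ab}$ and $\mb{u}^{\star ab}$ from Lemma~\ref{lem:bernstein_coefficients} transform equivariantly: marginal and pairwise coordinates are permuted by $\pi$, and $\mb{e}_{S_a}\mapsto\mb{e}_{\pi(S_a)}$.
\item The constraint matrices $A_{\mathrm P}$ and $G$ are equivariant under the same row and column permutations.
\item $\mb{e}_\emptyset$ is fixed, since $\pi(\emptyset)=\emptyset$, and nonnegativity is preserved by any permutation.
\end{itemize}

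Each $B'\in\mathcal{O}$ is $\pi(B)$ for some $\pi$, so feasibility for all 15 blocks at $B$ gives feasibility for all 15 blocks at every basis in the orbit. By Corollary~\ref{cor:sufficient_certificates}, this covers every combination $(\mb{x},\mb{u}^\star)$ arising from that orbit. The same argument applies to subdivided leaf simplices: a subdivision of $\Delta_V(B)$ maps under $P_\pi$ to a subdivision of $\Delta_V(B')$, with barycentric coordinates preserved.

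\emph{Main obstacle.} The delicate point is bookkeeping the induced permutation on the rows of $G$. One must check that the monotonicity row $(i,S)$ maps to the row $(\pi(i),\pi(S))$. For the submodularity row $(i,j,S)$, if $\pi(i)>\pi(j)$ the image must be reordered to the pair $\{\pi(j),\pi(i)\}$; since the submodularity inequality is symmetric in $i$ and $j$, this reordering gives the same row. I would write this explicitly as $G\,Q_\pi=R_\pi G$ for permutation matrices $Q_\pi$ (on the 16 subsets) and $R_\pi$ (on the 56 rows). The full equivariance then follows by multiplying the cone certificate equation by $Q_\pi$.
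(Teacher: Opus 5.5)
Your proposal is correct and follows the same route as the paper. The $183$-orbit count is taken as a computational fact (Table~\ref{tab:orbit_subdivision_statistics}), and Lemma~\ref{lem:symmetry_permutation_invariance}, with the labeling $S'_i=\pi(S_i)$, transfers a feasible coefficient solution to every basis in the orbit; your checks that $\Pi_4$ preserves nonsingularity and that the argument carries over to leaf simplices make explicit what the paper leaves implicit. Your Burnside cross-check also goes through: the numbers of nonsingular bases fixed by a transposition, a double transposition, a $3$-cycle and a $4$-cycle are $176$, $48$, $20$ and $4$, so $\frac{1}{24}\left(3008+6\cdot176+3\cdot48+8\cdot20+6\cdot4\right)=183$.
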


\begin{proof}
Two bases belong to the same orbit if one can be obtained from the other
by a permutation of the four ground set elements in $N_4$. By
Lemma~\ref{lem:symmetry_permutation_invariance}, a feasible coefficient
system for one basis induces a feasible coefficient system for every
basis in its orbit. Since the $3,008$ nonsingular bases partition into
$183$ such orbits, it suffices to verify one representative from each
orbit. With $15$ Bernstein coefficient blocks $(a,b)$, this amounts to solving
$15\times183=2,745$ linear coefficient systems of the form \eqref{eq:coeff_wise_27_equations_73_variables}.
\end{proof}
\noindent Having reduced the problem to one representative basis from each of the
$183$ permutation orbits, we next address the possibility that the
Bernstein coefficient system is infeasible on the entire representative
simplex $\Delta^\mathcal{O}$ for a given coefficient block $(a,b)$ by
providing a recursive subdivision strategy in Algorithm~\ref{alg:bernstein_subdivision}.
\subsection{Infeasibility and recursive subdivision algorithm}
\begin{algorithm}[H]
\caption{Recursive Simplex subdivision for Bernstein coefficient feasibility}
\label{alg:bernstein_subdivision}
\begin{algorithmic}[1]
\REQUIRE Representative simplex $\Delta_{\mathcal O}$, coefficient
block $(a,b)$, and maximum depth $D_{\max}=10$
\ENSURE A collection of leaf simplices covering $\Delta_{\mathcal O}$,
together with, for each feasible leaf, its depth and a certificate
$(\mb{p}^{ab},\mb{\gamma}^{ab},c^{ab})$ satisfying
\eqref{eq:coeff_wise_27_equations_73_variables}

\STATE Initialize a queue with the root simplex $\Delta_{\mathcal O}$ at depth $0$.
\STATE Initialize the set of feasible leaf certificates as empty.
\WHILE{the queue is nonempty}
    \STATE Remove a simplex $\Delta$ and its depth $d$ from the queue.
    \STATE Solve the coefficient system
    \eqref{eq:coeff_wise_27_equations_73_variables} for $\Delta$ and block
    $(a,b)$.
    \IF{the coefficient system is feasible}
        \STATE Store the leaf simplex $\Delta$, its depth $d$, and the
        certificate $(\mb{p}^{ab},\mb{\gamma}^{ab},c^{ab})$.
    \ELSIF{$d < D_{\max}$}
        \STATE Identify the longest edge of $\Delta$.
        \STATE Bisect the longest edge of $\Delta$ to obtain two child
        simplices $\Delta_1$ and $\Delta_2$ that differ in exactly one
        vertex.
        \STATE Add $\Delta_1$ and $\Delta_2$ to the queue with depth $d+1$.
    \ELSE
        \STATE Declare $\Delta$ unresolved.
    \ENDIF
\ENDWHILE
\RETURN The feasible leaf simplices, their depths, and the associated
certificates $(\mb{p}^{ab},\mb{\gamma}^{ab},c^{ab})$.
\end{algorithmic}
\end{algorithm}
Algorithm \ref{alg:bernstein_subdivision} describes a recursive subdivision strategy that splits an infeasible simplex using its longest
edge. A child simplex for which the coefficient system is
feasible is retained as a leaf, while an infeasible child is further
subdivided. The procedure terminates when all resulting leaf simplices
admit a feasible certificate, thereby providing a certificate over the
entire original simplex, since the collection of leaf simplices cover $\Delta_{\mathcal O}$. We note that each subdivision in Algorithm~\ref{alg:bernstein_subdivision}
bisects an edge and produces two child simplices, each obtained by replacing
one of the two endpoints of the bisected edge by its midpoint. Hence every
child simplex has vertices that are affine combinations of the vertices of
the root simplex $\Delta^{\mathcal O}$. Table~\ref{tab:representative_orbit_subdivision_statistics} presents six
representative permutation orbits spanning the ranges of the number of
terminating leaves and subdivision depth across the $183$ orbits. Orbits $31$ and $164$ terminate with the smallest number of leaves $4$
with a maximum depth $2$, orbits $83$ and $50$ terminate with the largest
number of leaves $89$ and $103$ with depth $10$, and orbits $61$ and $109$
represent intermediate cases near the median, both with depth $6$. The
subdivision statistics for the remaining orbits are reported in
Table~\ref{tab:orbit_subdivision_statistics} of Appendix \ref{appendix_sec:orbit_representatives_and_subdivision_stats}.
\begin{table}[htbp]
\centering
\caption{Representative permutation orbits illustrating the range of subdivision}
\label{tab:representative_orbit_subdivision_statistics}
\small\setlength{\tabcolsep}{4pt}
\renewcommand{\arraystretch}{1.1}
\begin{tabular}{c p{0.53\linewidth} c c}
\hline
Orbit & Representative basis & Leaves & Depth \\
\hline
31
& $\{\emptyset,\{1\},\{1,2\},\{1,2,3\},\{1,2,3,4\}\}$
& 4 & 2 \\[0.3em]

164
& $\{\{1,2\},\{1,3\},\{2,3\},\{1,2,3\},\{1,2,3,4\}\}$
& 4 & 2 \\[0.3em]

61
& $\{\emptyset,\{1,2,3\},\{1,2,4\},\{1,3,4\},\{1,2,3,4\}\}$
& 20 & 6 \\[0.3em]

109
& $\{\{1\},\{2\},\{1,2,3\},\{1,2,4\},\{1,2,3,4\}\}$
& 25 & 6 \\[0.3em]

83
& $\{\{1\},\{2\},\{3\},\{4\},\{1,2,3,4\}\}$
& 89 & 10 \\[0.3em]

50
& $\{\emptyset,\{1,2\},\{1,3\},\{1,4\},\{2,3,4\}\}$
& 103 & 10 \\
\hline
\end{tabular}
\end{table}
\subsection{Computational experiments}

We implemented Algorithm~\ref{alg:bernstein_subdivision} in Python 3.10.0 and
used linear programming to test the feasibility of the $2,745$ Bernstein
coefficient systems.
All computational experiments were conducted on a
MacBook Pro equipped with an Apple M4 Pro processor and 24~GB of
memory, running macOS Sequoia 15.2.

All $2,745$ coefficient systems were found to be feasible, with a total of $4476$ successful leaf simplices, which proves that the $4/3$ bound holds for $n=4$ as claimed in Theorem \ref{thm:n=4_main_result}. Table~\ref{tab:orbit_subdivision_statistics} reports the
corresponding subdivision statistics, including the number of terminal
leaf simplices and the maximum subdivision depth.
Complete output details, including a JSON file for each of the $183$ permutation orbits containing the terminal leaf simplices, subdivision depths, and Bernstein coefficients for each coefficient block, together with the Python implementation of the subdivision procedure, are available at the accompanying GitHub repository\footnote{\href{https://github.com/ArjunKRamachandra/CorrelationGap_4dimensions}{github.com/ArjunKRamachandra/CorrelationGap\_\_4dimensions}}. 

To illustrate the complete recursive subdivision procedure, we next consider a representative permutation orbit, derive cone certificates for the resulting leaf simplices, and reconstruct the corresponding numerator and denominator distributions from their Bernstein coefficients.
\subsection{Illustrative subdivision example}\label{subsec:illustrative_subdivision_example}
By the symmetry established in Lemma~\ref{lem:symmetry_permutation_invariance},
it is sufficient to construct certificates for each coeficient block $(a,b)$ using one representative
simplex from each orbit. To illustrate the subdivision procedure, we
consider the representative simplex
for Orbit
164  from Table \ref{tab:orbit_subdivision_statistics}, denoted by $\Delta^{\mathcal O}=
\operatorname{conv}\{\mb{v}^*_0,\mb{v}^*_1,\mb{v}^*_2,\mb{v}^*_3,\mb{v}^*_4\}$,
where
\[
\mb{v}^*_0=\mb{e}_1+\mb{e}_2,\quad
\mb{v}^*_1=\mb{e}_1+\mb{e}_3,\quad
\mb{v}^*_2=\mb{e}_2+\mb{e}_3,\quad
\mb{v}^*_3=\mb{e}_1+\mb{e}_2+\mb{e}_3,\quad
\mb{v}^*_4=\mb{1}.
\]
 are the incidence vectors
corresponding to the five-subset basis
$
B^\star
=
\left\{
\{1,2\},\{1,3\},\{2,3\},\{1,2,3\},N_4
\right\}$. Since
$
\mb{x}=\sum_{i=0}^4\eta_i\mb{v}^*_i$,
we have
\[
x_1=\eta_0+\eta_1+\eta_3+\eta_4,\qquad
x_2=\eta_0+\eta_2+\eta_3+\eta_4,\qquad
x_3=\eta_1+\eta_2+\eta_3+\eta_4,\qquad
x_4=\eta_4,
\]
Since $\sum_{i=0}^4\eta_i=1$, we obtain $
x_1=1-\eta_2,\;
x_2=1-\eta_1,\;
x_3=1-\eta_0,\;
x_4=\eta_4$, 
and
\[
\begin{aligned}
x_1+x_2+x_3-x_4
&=3-(\eta_0+\eta_1+\eta_2)-\eta_4\\
&=3-(1-\eta_3)\\
&=2+\eta_3
\ge2,
\end{aligned}
\]
where the last equality follows from $\eta_3\ge0$. Hence the marginal vector $\mb{x}$ whose optimal numerator
vertex $\mb{u}^\star(\mb{x},\mb{f})$ is supported on $B^\star$ must satisfy
$
x_1+x_2+x_3-x_4\ge2.
$ throughout the simplex $\Delta^{\mathcal O}$. 

The computational results in Table~\ref{tab:orbit_subdivision_statistics} show that the
representative simplex for Orbit~164 is certified by four leaf simplices,
with a maximum subdivision depth two. Since subdivision is performed
only when the coefficient system is infeasible, the root simplex and both
of its depth-one children are infeasible. Following the recursive
subdivision procedure in Algorithm~\ref{alg:bernstein_subdivision}, the
first bisection is performed along the longest failed edge
$(0,1)$ of the root simplex, while the subsequent bisections are along
the longest failed edge $(1,2)$ producing $\Delta_1,\Delta_2$ and edge $(0,2)$ producing $\Delta_3,\Delta_4 $ at
depth two. Table~\ref{tab:sub_simplex_vertices} lists the four leaf
simplices together with their vertices $\mb{v}_i^j$, where
$\mb{v}_i^j$ denotes the $i$th vertex of the $j$th leaf simplex and their convex hull representations in
terms of the representative vertices
$\mb{v}_0^\star,\ldots,\mb{v}_4^\star$. Each leaf-simplex vertex is either
a representative vertex or the midpoint of an edge of the representative
simplex.
\begin{table}[H]
\centering
\caption{Vertices of the root, intermediate, and leaf simplices for Orbit~164.}
\label{tab:sub_simplex_vertices}
\begin{tabular}{c|ccccc|l}
\hline
Simplex
& $\mb{v}_0^j$ & $\mb{v}_1^j$ & $\mb{v}_2^j$ & $\mb{v}_3^j$ & $\mb{v}_4^j$
& convex hull representation
\\
\hline




$\Delta_1$
&
$\left(1,\frac12,\frac12,0\right)$
&
$\left(\frac12,\frac12,1,0\right)$
&
$(0,1,1,0)$
&
$(1,1,1,0)$
&
$(1,1,1,1)$
&
$\operatorname{conv}\left\{
\frac{\mb{v}_0^\star+\mb{v}_1^\star}{2},
\frac{\mb{v}_1^\star+\mb{v}_2^\star}{2},
\mb{v}_2^\star,\mb{v}_3^\star,\mb{v}_4^\star
\right\}
$
\\[0.8em]

$\Delta_2$
&
$\left(1,\frac12,\frac12,0\right)$
&
$(1,0,1,0)$
&
$\left(\frac12,\frac12,1,0\right)$
&
$(1,1,1,0)$
&
$(1,1,1,1)$
&
$\operatorname{conv}\left\{
\frac{\mb{v}_0^\star+\mb{v}_1^\star}{2},
\mb{v}_1^\star,
\frac{\mb{v}_1^\star+\mb{v}_2^\star}{2},
\mb{v}_3^\star,\mb{v}_4^\star
\right\}
$
\\[0.8em]

$\Delta_3$
&
$\left(\frac12,1,\frac12,0\right)$
&
$\left(1,\frac12,\frac12,0\right)$
&
$(0,1,1,0)$
&
$(1,1,1,0)$
&
$(1,1,1,1)$
&
$\operatorname{conv}\left\{
\frac{\mb{v}_0^\star+\mb{v}_2^\star}{2},
\frac{\mb{v}_0^\star+\mb{v}_1^\star}{2},
\mb{v}_2^\star,\mb{v}_3^\star,\mb{v}_4^\star
\right\}
$
\\[0.8em]

$\Delta_4$
&
$(1,1,0,0)$
&
$\left(1,\frac12,\frac12,0\right)$
&
$\left(\frac12,1,\frac12,0\right)$
&
$(1,1,1,0)$
&
$(1,1,1,1)$
&
$\operatorname{conv}\left\{
\mb{v}_0^\star,
\frac{\mb{v}_0^\star+\mb{v}_1^\star}{2},
\frac{\mb{v}_0^\star+\mb{v}_2^\star}{2},
\mb{v}_3^\star,\mb{v}_4^\star
\right\}
$
\\
\hline
\end{tabular}
\end{table}

The four leaf simplices can alternatively be described through the convex
coordinates of $\mb{x}$ with respect to their five vertices. Table~\ref{tab:sub_simplex_convex_coefficients} gives the corresponding
convex coefficients $\eta^j_i$, where $\eta^j_i$ denotes the convex
coefficient of the $i$th vertex of the $j$th leaf simplex and should be
distinguished from the coefficients $\eta_i$ associated with the vertices
of the representative simplex $\Delta^{\mathcal O}$. These coefficients
are obtained from
$
\mb{x}=\sum_{i=0}^4\eta^j_i\mb{v}_i^j
$ for each $j=1,2,3,4$
and are affine functions of the marginal probabilities.
The marginal probability
conditions in the last column are obtained by requiring all five convex
coefficients to be nonnegative, together with the root-simplex conditions.
These restrictions characterize the four regions in which the corresponding
optimal numerator vertex $\mb{u}^\star(\mb{x},\mb{f})$ is supported on the
five vertices of the respective leaf simplex. Each subdivision corresponds
to slicing the current parent simplex by a hyperplane. 
\begin{table}[htbp]
\centering
\caption{Convex coefficients and marginal conditions for the four
feasible leaf simplices of $\Delta^{\mathcal O}$}
\label{tab:sub_simplex_convex_coefficients}
\small
\setlength{\tabcolsep}{3pt}
\renewcommand{\arraystretch}{1.05}

\begin{tabular}{|c|c|c|c|c|c|c|}
\hline
\rule{0pt}{2.7ex}sub-simplex
& $\eta^j_0$ & $\eta^j_1$ & $\eta^j_2$ & $\eta^j_3$ & $\eta^j_4$
& Marginal conditions
\\
\hline
$\Delta_1$
&
$2(1-x_3)$
&
$2(x_3-x_2)$
&
$1-x_1+x_2-x_3$
&
$x_1+x_2+x_3-x_4-2$
&
$x_4$
&
$\begin{array}{c}
x_2\le x_3,\\
x_1+x_3\le1+x_2
\end{array}$
\\[1.0em]
\hline

$\Delta_2$
&
$2(1-x_3)$
&
$x_1-x_2+x_3-1$
&
$2(1-x_1)$
&
$x_1+x_2+x_3-x_4-2$
&
$x_4$
&
$\begin{array}{c}
x_2\le x_3,\\
x_1+x_3\ge1+x_2
\end{array}$
\\[1.0em]
\hline

$\Delta_3$
&
$2(x_2-x_3)$
&
$2(1-x_2)$
&
$1-x_1-x_2+x_3$
&
$x_1+x_2+x_3-x_4-2$
&
$x_4$
&
$\begin{array}{c}
x_2\ge x_3,\\
x_1+x_2\le1+x_3
\end{array}$
\\[1.0em]
\hline

$\Delta_4$
&
$x_1+x_2-x_3-1$
&
$2(1-x_2)$
&
$2(1-x_1)$
&
$x_1+x_2+x_3-x_4-2$
&
$x_4$
&
$\begin{array}{c}
x_2\ge x_3,\\
x_1+x_2\ge1+x_3
\end{array}$
\\

\hline
\end{tabular}
\end{table}
 Specifically, the
original representative simplex is first split by $x_2=x_3$. The branch
$x_2\le x_3$ is then split by $x_1+x_3=1+x_2$, yielding $\Delta_1$ and
$\Delta_2$, while the branch $x_2\ge x_3$ is split by
$x_1+x_2=1+x_3$, yielding $\Delta_3$ and $\Delta_4$. Equivalently, each
leaf is the intersection of the representative simplex with the two
corresponding half spaces. These four regions are mutually exclusive and
cover $\Delta^{\mathcal O}$ as shown in Figure \ref{fig:representative_simplex_subdivision}. Hence, every $\mb{x}\in\Delta^{\mathcal O}$
belongs to one of the four leaf simplices, and the corresponding Bernstein
certificates suffice to certify the entire representative simplex.
\begin{figure}[H]
\centering
\resizebox{0.85\textwidth}{!}{%
\begin{tikzpicture}[
    >=Stealth,
    every path/.style={line width=1.0pt},
    every node/.style={font=\bfseries\large}
]

\tikzset{
  box/.style={
    draw,
    rectangle,
    very thick,
    align=center,
    minimum width=0pt,
    minimum height=0pt,
    inner xsep=3mm,
    inner ysep=1.5mm
  },
  leaf/.style={
    draw,
    circle,
    very thick,
    minimum size=8mm,
    inner sep=0pt,
    font=\bfseries
  }
}

\node[box] (root) at (0,0)
{$\mb{x}\in\Delta^{\mathcal O}$\\
$x_1+x_2+x_3\ge 2+x_4$};

\node[box] (left) at (-4,-2.2)
{$x_2\le x_3$};

\node[box] (right) at (4,-2.2)
{$x_2\ge x_3$};

\draw (root) -- (left);
\draw (root) -- (right);

\node[box] (leftlow) at (-7,-4.4)
{$x_1+x_3\le1+x_2$};

\node[box] (lefthigh) at (-1,-4.4)
{$x_1+x_3\ge1+x_2$};

\draw (left) -- (leftlow);
\draw (left) -- (lefthigh);

\node[leaf] (L1) at (-7,-5.3) {$\Delta_1$};
\node[leaf] (L2) at (-1,-5.3) {$\Delta_2$};

\node[box] (rightlow) at (3,-4.4)
{$x_1+x_2\le1+x_3$};

\node[box] (righthigh) at (9,-4.4)
{$x_1+x_2\ge1+x_3$};

\draw (right) -- (rightlow);
\draw (right) -- (righthigh);

\node[leaf] (L3) at (3,-5.3) {$\Delta_3$};
\node[leaf] (L4) at (9,-5.3) {$\Delta_4$};

\end{tikzpicture}%
}
\caption{Subdivision of the representative simplex $\Delta^{\mathcal O}$
for Orbit~164 into four certified leaf simplices.}
\label{fig:representative_simplex_subdivision}
\end{figure}

To illustrate the construction of the cone certificate, we next highlight one of the
leaf sub-simplices, namely sub-simplex~4. Table~\ref{tab:our_leaf_4} reports, for each block $(a,b)$, the
Bernstein coefficient vectors $\mb{p}^{ab}$ and $\mb{\gamma}^{ab}$,
together with the associated constant $c^{ab}$ for sub-simplex~4. The corresponding Bernstein coefficient tables for the other three sub-simplices are provided in
Appendix~\ref{appendix_sec:sample_bernstein_tables}. Table~\ref{tab:leaf4_distributions} illustrates the reconstructed optimal
numerator and feasible denominator distributions for sub-simplex~4 in terms
of the marginal probabilities. The numerator distribution is optimal whenever
$\mb{u}^*(\mb{x},\mb{f})=\argmax_{\mb{u}\in\mathcal{F}_{\mathrm{U}}(\mb{x})}\mb{f}^{\top}\mb{u}$
is supported on
$B^\star=\left\{\{1,2\},\{1,3\},\{2,3\},\{1,2,3\},N_4\right\}$
for a given monotone submodular function vector $\mb{f}$ and a marginal probability vector $\mb{x} \in [0,1]^4$ satisfying
\begin{align}\label{al:marginals_sub_simplex_4}
x_1+x_2+x_3\ge 2+x_4,\quad
x_2\ge x_3,\quad
x_1+x_2\ge 1+x_3.
\end{align}
It is obtained directly from
\[
\mb{u}^\star(\mb{x})=\sum_{i=0}^4\eta^4_i(\mb{x})\mb{\rho}_i,
\qquad
\mb{\rho}_i=\left(\mb{e}_{S_0},\frac{\mb{e}_{S_0}+\mb{e}_{S_1}}{2},\frac{\mb{e}_{S_0}+\mb{e}_{S_2}}{2},\mb{e}_{S_3},\mb{e}_{S_4}\right)_i,
\]
where $\mb{e}_{S_i}$ is the $16$-dimensional unit vector corresponding to the incidence vector of the $i$th vertex of the original representative simplex $\Delta^{\mathcal O}$, and the convex coefficients $\eta^4_i(\mb{x})$ are obtained from the last row of Table~\ref{tab:sub_simplex_convex_coefficients}. 

\begin{table}[H]
\centering
\caption{Bernstein coefficients for sub-simplex 4.}
\label{tab:our_leaf_4}
\footnotesize
\setlength{\tabcolsep}{2pt}
\renewcommand{\arraystretch}{0.82}

\begin{tabularx}{\textwidth}{
>{\raggedright\arraybackslash}p{0.08\textwidth}
>{\raggedright\arraybackslash}p{0.25\textwidth}
>{\raggedright\arraybackslash}X
>{\centering\arraybackslash}p{0.07\textwidth}}
\toprule
Block & $\mb{p}^{ab}$ & $\mb{\gamma}^{ab}$ & $c^{ab}$ \\
\midrule
$(0,0)$ & $\{\,3:1\,\}$ & $\{\,0:1/3\; 4:1/3\,\}$ & $1/3$ \\
$(0,1)$ & $\{\,1:1/4\; 3:1/2\; 7:1/4\,\}$ & $\{\,0:1/3\; 10:1/12\; 15:1/4\,\}$ & $1/3$ \\
$(0,2)$ & $\{\,2:1/4\; 3:1/2\; 7:1/4\,\}$ & $\{\,1:1/3\; 10:1/12\; 17:1/4\,\}$ & $1/3$ \\
$(0,3)$ & $\{\,3:1/2\; 7:1/2\,\}$ & $\{\,0:1/3\; 4:1/3\; 10:1/6\,\}$ & $1/3$ \\
$(0,4)$ & $\{\,7:1/2\; 11:1/2\,\}$ & $\{\,0:1/3\; 4:1/3\; 10:1/6\; 11:1/6\; 44:1/2\,\}$ & $1/3$ \\
$(1,1)$ & $\{\,1:1/4\; 3:1/4\; 5:1/4\; 7:1/4\,\}$ & $\{\,0:1/3\; 10:1/6\; 15:1/6\,\}$ & $1/3$ \\
$(1,2)$ & $\{\,0:1/8\; 3:3/8\; 5:1/8\; 6:1/8\; 7:1/4\,\}$ & $\{\,0:1/6\; 4:1/6\; 10:1/6\; 15:1/12\; 17:1/12\,\}$ & $1/3$ \\
$(1,3)$ & $\{\,3:1/4\; 5:1/4\; 7:1/2\,\}$ & $\{\,0:1/3\; 4:1/4\; 5:1/12\; 10:1/6\,\}$ & $1/3$ \\
$(1,4)$ & $\{\,7:1/2\; 11:1/4\; 13:1/4\,\}$ & $\{\,1:1/12\; 2:1/4\; 14:1/12\; 15:1/6\; 16:1/12\; 17:1/4\; 25:1/12\; 43:1/12\; 44:1/4\; 47:1/6\; 55:1/4\,\}$ & $1/3$ \\
$(2,2)$ & $\{\,2:1/4\; 3:1/4\; 6:1/4\; 7:1/4\,\}$ & $\{\,1:1/3\; 10:1/6\; 17:1/6\,\}$ & $1/3$ \\
$(2,3)$ & $\{\,3:1/4\; 6:1/4\; 7:1/2\,\}$ & $\{\,0:1/4\; 1:1/12\; 4:1/4\; 8:1/12\; 10:1/6\,\}$ & $1/3$ \\
$(2,4)$ & $\{\,7:1/2\; 11:1/4\; 14:1/4\,\}$ & $\{\,0:1/12\; 2:1/12\; 14:1/12\; 15:1/4\; 17:1/6\; 18:1/12\; 22:1/6\; 23:1/12\; 34:1/6\; 40:1/4\; 44:1/4\; 55:1/4\,\}$ & $1/3$ \\
$(3,3)$ & $\{\,7:1\,\}$ & $\{\,0:1/3\; 4:1/3\; 10:1/3\,\}$ & $1/3$ \\
$(3,4)$ & $\{\,7:1/2\; 15:1/2\,\}$ & $\{\,0:1/3\; 4:1/3\; 10:1/3\; 19:1/6\,\}$ & $1/3$ \\
$(4,4)$ & $\{\,15:1\,\}$ & $\{\,0:1/3\; 4:1/3\; 10:1/3\; 19:1/3\,\}$ & $1/3$ \\
\bottomrule
\end{tabularx}
\end{table}
The pairwise distribution, on the other hand, is not necessarily optimal for the given $\mb{x}$ and is reconstructed from
\[
\mb{p}(\mb{x})=\sum_{0\le a\le b\le4}\mb{p}^{ab}B_{ab}(\mb{\eta}(\mb{x})),
\]
using the Bernstein coefficient distributions $\mb{p}^{ab}$ reported in Table~\ref{tab:our_leaf_4} for each block $(a,b)$. By Lemma \ref{lem:cone_certificate}, the distributions in Table \ref{tab:leaf4_distributions} ensure that the cone certificate equation in \eqref{eq:cone_certificate} is satisfied for any monotone submodular function vector $\mb{f}$ and any $\mb{x}$ satisfying \eqref{al:marginals_sub_simplex_4}. It can be observed that the numerator distribution is supported on five points, precisely corresponding to the five vertices of the sub-simplex, while the pairwise distribution is supported on eleven points.
\begin{table}[H]
\centering
\caption{Optimal numerator and feasible denominator distributions for sub-simplex~4.}
\label{tab:leaf4_distributions}
\begin{tabular}{|c|c@{\qquad}|c|c|}
\toprule
Subset $S$ & $\mb{u}^*_S(\mb{x})$ & Subset $S$ & $\mb{p}_S(\mb{x})$ \\
\midrule

$\{1,2\}$
&
$\displaystyle 1-x_3$
&
$\emptyset$
&
$\displaystyle
(1-x_1)(1-x_2)
$
\\[1em]

$\{1,3\}$
&
$\displaystyle 1-x_2$
&
$\{1\}$
&
$\displaystyle
(x_1-x_3)(1-x_2)
$
\\[1em]

$\{2,3\}$
&
$\displaystyle 1-x_1$
&
$\{2\}$
&
$\displaystyle
(1-x_1)(x_2-x_3)
$
\\[1em]

$\{1,2,3\}$
&
$\displaystyle x_1+x_2+x_3-x_4-2$
&
$\{1,2\}$
&
$\displaystyle
x_1(x_2-x_3)+x_3(1-x_2)-x_4(1-x_3)
$
\\[1em]

$N_4$
&
$\displaystyle x_4$
&
$\{1,3\}$
&
$\displaystyle
(1-x_2)(x_3-x_4)
$
\\[1em]

&
&
$\{2,3\}$
&
$\displaystyle
(1-x_1)(x_3-x_4)
$
\\[1em]

&
&
$\{1,2,3\}$
&
$\displaystyle
(x_1+x_2-1)x_3-x_4(x_1+x_2+x_3-2)
$
\\[1em]

&
&
$\{1,2,4\}$
&
$\displaystyle
x_4(1-x_3)
$
\\[1em]

&
&
$\{1,3,4\}$
&
$\displaystyle
x_4(1-x_2)
$
\\[1em]

&
&
$\{2,3,4\}$
&
$\displaystyle
x_4(1-x_1)
$
\\[1em]

&
&
$N_4$
&
$\displaystyle
x_4(x_1+x_2+x_3-2)
$
\\
\bottomrule
\end{tabular}
\end{table}

\section{Worst case asymptotic pairwise independent correlation gap}\label{sec:asymptotic_pw_correlation_gap}
In this section, we prove that the worst case
pairwise independent correlation gap  attains
$e/(e-1)$ asymptotically as $n \to \infty$, the same worst case bound established in \cite{Agrawal2012} under mutual independence. As an immediate consequence, since every $t$-wise independent distribution ($t\ge 2$) is necessarily pairwise independent, our result implies that the worst case $t$-wise independent correlation gap is also exactly $e/(e-1)$. We first define a specific class of coverage functions that are monotone and submodular by definition that will be used to prove the main result. 
\subsection*{Coverage functions}
Let $\mathcal H_m=\{H\subseteq 2^N: |H|=m\}$ be a family of $m$-subsets of $N$, called
\emph{features}. Each feature $H\in\mathcal H$ is assigned a nonnegative
weight $w_H\ge0$. Define a corresponding nonnegative set function $g_m: 2^{N} \rightarrow \mathbb{R}_+$ as:
\[
g_m(S)
=
\sum_{H\in\mathcal H_m}
w_H\,
\mb 1\{S\cap H\neq\varnothing\},
\qquad S\subseteq N.
\]
The function $g_m$ is a weighted coverage function and for a given $S$, equals the total weight of the features covered by $S$. Such a weighted coverage function is known to be nonnegative, nondecreasing and submodular (see \cite{bachsub}). 

\begin{theorem}\label{thm:main_result_asymptotic}[Asymptotically tight bound]
For any $n$, any nonnegative monotone submodular function $f: 2^{N} \rightarrow \mathbb{R}_+$ and any $\mb{x} \in [0,1]^n$: 
$$\frac{f^{+}(\mb{x})}{f^{++}(\mb{x})} \leq \frac{e}{e-1}$$
Moreover, the bound is asymptotically attained when:
\begin{enumerate}[label=\roman*)]
\item
The ground set is partitioned into $m$ blocks as
$
N=A_1\cup A_2\cup\cdots\cup A_{m-1}\cup A_m,
$ where \newline
$$|A_r|=\ell=\left\lfloor\dfrac{n}{m}\right\rfloor,\; \forall r\in M \setminus \{m\},\quad
|A_m|=\ell_{m}=n-(m-1)\ell,$$
and the feature family is defined as
$
\mathcal H_m=A_1\times A_2\times\cdots\times A_{m-1}\times A_m$, 
consisting of all $m$-tuples obtained by selecting one element from each set of 
the partition.
\item
The asymptotic regime considers the number of partition blocks $m$ as a
function of $n$, denoted by $m=m(n)$, with both $n$ and $m(n)$ tending to
infinity and $m(n)$ growing sublinearly in $n$, \emph{i.e.},
\[
n\to\infty,\qquad m(n)\to\infty,\qquad m(n)=o(n).
\]
\item
The marginal probabilities are set to be identical with
$x_i=\frac1m,\; \forall i\in N$.
\item
The function $
f(S)$ is the coverage function $g_m(S)$ with unit feature weights $w_H=1$ for every $H\in\mathcal H_m$ and counts the number of features covered by at least one element of $S$.
\end{enumerate}
\end{theorem}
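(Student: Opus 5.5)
The upper bound needs no new work. The mutually independent distribution is pairwise independent, so $F(\mb{x})\le f^{++}(\mb{x})$. Hence $f^+(\mb{x})/f^{++}(\mb{x})\le f^+(\mb{x})/F(\mb{x})\le e/(e-1)$ by \cite{Agrawal2012}. All the effort goes into the attainment part, where I will show two things. First, $f^+(\mb{x})$ is essentially the total weight $|\mathcal H_m|$. Second, every pairwise independent distribution covers at most a fraction $1-e^{-1}+o(1)$ of the features.

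\emph{Numerator.} Write $z_r(S)=|S\cap A_r|/|A_r|$. A feature is uncovered exactly when it avoids $S$ in every block, so
\[
f(S)=|\mathcal H_m|\Bigl(1-\prod_{r=1}^m\bigl(1-z_r(S)\bigr)\Bigr).
\]
Choose a random block $R$ and set $S=A_R$. If the blocks were equal, taking $R$ uniform on the $m$ blocks would give every element probability exactly $1/m$, and each such $S$ covers all features. Since $A_m$ differs in size, I will split mass slightly: take $S=A_r$ with probability $1/m$ for $r<m$, and put the remaining mass on subsets of $A_m$ of size about $\ell$. This gives $f^+(\mb{x})\ge(1-o(1))|\mathcal H_m|$. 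The correction is controlled because $\ell_m/\ell\to1$ when $m=o(n)$; even in the worst case, a constant fraction of $A_m$ already covers everything only if we allow a lower bound of $|\mathcal H_m|\,(1-o(1))$, which I will check directly.

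\emph{Denominator via a quadratic dual.} The dual of \eqref{eq:pairwise_concave_closure} says the following. Suppose $q(S)=\alpha+\sum_i\beta_i\mathbbm 1_{i\in S}+\sum_{i<j}\gamma_{ij}\mathbbm 1_{i\in S}\mathbbm 1_{j\in S}$ satisfies $q(S)\ge f(S)$ for every $S$. Then $f^{++}(\mb{x})\le\mathbb E_{\rm ind}[q]$, because the expectation of a quadratic is fixed by the first and second moments. By symmetry I restrict to $q$ that are quadratic in the block fractions: $q/|\mathcal H_m|=1-\psi(\mb z)$ with $\psi$ a quadratic polynomial in $z_1,\dots,z_m$. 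This is the \emph{reduced} dual, scaled by $|\mathcal H_m|$. Feasibility becomes the pointwise inequality $\prod_r(1-z_r)\ge\psi(\mb z)$ on the grid of attainable $\mb z$.

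My candidate is
\[
\psi(\mb z)=e^{-1}(2-T)-C\sum_r z_r^2,\qquad T=\sum_r z_r,
\]
for a suitable absolute constant $C$. The heuristic for feasibility has three parts.
\begin{itemize}[label=--]
\item When all $z_r\le\tfrac12$, use $1-z\ge e^{-z-z^2}$, which gives $\prod_r(1-z_r)\ge e^{-T}e^{-\sum z_r^2}\ge e^{-T}(1-\sum z_r^2)$.
\item Convexity gives $e^{-T}\ge e^{-1}(2-T)$, and $e^{-T}\le1$ lets the $\sum z_r^2$ term be absorbed by $C\sum z_r^2$ once $C\ge1$. The case $T>2$ is harmless because then $\psi<0\le\prod_r(1-z_r)$.
\item If some $z_r>\tfrac12$, then $C z_r^2\ge C/4$, which exceeds $2e^{-1}\ge e^{-1}(2-T)$ when $C\ge 3$. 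Again $\psi<0$.
\end{itemize}

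Next I evaluate $\mathbb E_{\rm ind}[\psi]$ with $x_i=1/m$. We have $\mathbb E[T]=1+o(1)$, the $o(1)$ coming from the unequal last block. Also
\[
\mathbb E[z_r^2]=\mathrm{Var}(z_r)+m^{-2}=O\bigl(1/(m\ell)\bigr)+m^{-2},\qquad\text{so}\qquad \sum_r\mathbb E[z_r^2]=O(1/\ell+1/m)\to0.
\]
This uses both $m\to\infty$ and $\ell\to\infty$, the latter because $m=o(n)$. Therefore $\mathbb E[\psi]\to e^{-1}$, giving $f^{++}(\mb{x})\le(1-e^{-1}+o(1))|\mathcal H_m|$. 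Combined with the numerator bound, the ratio tends to $1/(1-e^{-1})=e/(e-1)$.

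\emph{Main obstacle.} The step I expect to be delicate is the pointwise feasibility $\prod_r(1-z_r)\ge\psi(\mb z)$ uniformly over all $\mb z\in[0,1]^m$. This means checking the regime boundaries carefully, in particular $T$ near $2$ with several moderately large $z_r$, and choosing $C$ explicitly. If the single constant $C$ fails, I would first try a $\psi$ that also includes a small cross term $-\delta(T-1)^2$. Its expectation is likewise $O(1/\ell)\to 0$ under pairwise independence, and it gives extra slack away from $T=1$.
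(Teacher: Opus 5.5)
Your proof is correct, and for the attainment part it takes a genuinely different route from the paper. The upper bound, via $F\le f^{++}$ and the Agrawal et al.\ bound, is the same as in the paper.

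\emph{How the paper argues.} It restricts the pairwise dual to four multipliers $(A,B,D,E)$ that are identical across blocks and rescales them. It then passes to a continuous ``scaled asymptotic reduced dual'' on $[0,1]^m$. There it builds a quadratic $Q$ that agrees with $1-\prod_r(1-u_r)$ in value, gradient and mixed second derivatives at $\mb{u}^\star=(1/m,\ldots,1/m)$. The diagonal coefficient $D$ is left free, and a Taylor/compactness argument shows that some finite $D^\star(m)$ makes $Q\ge F$. Because $D$ cancels in the objective, this gives the value $1-(1-1/m)^m$ for each $m$.

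\emph{How you argue.} You write down an explicit weak-duality certificate $|\mathcal H_m|\bigl(1-\psi(\mb{z}(S))\bigr)$ in block-normalized fractions. The tangent line $e^{-1}(2-T)$ to $e^{-T}$ at $T=1$ plays the role of the paper's contact at $\mb{u}^\star$. The explicit penalty $C\sum_r z_r^2$ replaces the free parameter $D$.

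\emph{What each route buys.} Yours gives an exact, non-asymptotic bound. The identities $\mathbb E[z_r]=1/m$ and $\mathbb E[z_r^2]=m^{-2}+\frac1m(1-\frac1m)/|A_r|$ hold exactly under every pairwise independent law with these marginals. Hence $f^{++}(\mb{x})/|\mathcal H_m|\le 1-e^{-1}+C(1/m+1/\ell)$ for all $n,m$. The unequal last block is absorbed exactly by normalizing each $z_r$ by $|A_r|$. You never need the continuum limit or the accompanying $o(1)$ bookkeeping, which the paper handles only asymptotically (its error terms such as $O(m^2/n)$ do not vanish under $m=o(n)$ alone). Nor do you need any control on how $D^\star$ grows with $m$. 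The paper's route gives the sharper finite-$m$ constant $1-(1-1/m)^m$ and exhibits the second-order structure of the extremal dual. Neither matters for the limit.

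\emph{Three points to tidy up.}

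First, your ``main obstacle'' is already settled by your own bullets. Put $\Sigma=\sum_r z_r^2$. On the region where all $z_r\le\frac12$, the chain $\prod_r(1-z_r)\ge e^{-T-\Sigma}\ge e^{-T}-\Sigma\ge e^{-1}(2-T)-\Sigma$ holds for every value of $T$. The first step uses $1-z\ge e^{-z-z^2}$ on $[0,\frac12]$, which holds because $\log(1-z)+z+z^2$ vanishes at $0$ and has derivative $z(1-2z)/(1-z)\ge0$. If some $z_r>\frac12$, then $\psi<2e^{-1}-C/4\le 0$ once $C\ge 8/e$. So $C=3$ works, and no cross term is needed.

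Second, the numerator needs no correction for the unequal last block. Since the blocks partition $N$, the uniform mixture over $A_1,\ldots,A_m$ gives every element marginal exactly $1/m$, whatever the block sizes. Each $A_r$ meets every feature, so $f^+(\mb{x})=|\mathcal H_m|$ exactly. Your proposed modification would put the last $1/m$ of mass on $\ell$-subsets of $A_m$. That violates the marginal constraint on $A_m$ whenever $\ell_m>\ell$. The claim $\ell_m/\ell\to1$ is also false under $m=o(n)$ alone: $\ell_m$ can be as large as $\ell+m-1$, while $m/\ell\approx m^2/n$ need not vanish.

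Third, $\mathbb E[T]=1$ exactly, not $1+o(1)$. For the second moments, all you need is $|A_r|\ge\ell$ for every $r$.
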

\begin{proof}
We describe the key idea first. Let $\mb{x}^{\mathrm{id}}=\left(\frac1m,\frac1m,\ldots,\frac1m\right)$ denote the identical marginal probability vector. For any upper bound
$
\overline{f}^{++}(\mb{x}^{\mathrm{id}})
\ge
f^{++}(\mb{x}^{\mathrm{id}})
$,
we have
\[
\frac{f^{+}(\mb{x}^{\mathrm{id}})}
{\overline{f}^{++}(\mb{x}^{\mathrm{id}})}
\le
\frac{f^{+}(\mb{x}^{\mathrm{id}})}
{f^{++}(\mb{x}^{\mathrm{id}})}.
\]
Since~\cite{Agrawal2012} proved that
$$
\frac{f^{+}(\mb{x})}{f^{++}(\mb{x})}\le\frac{e}{e-1},
$$
for every marginal probability vector $\mb{x}$, it is sufficient to construct an upper bound
\(\overline{f}^{++}(\mb{x}^{\mathrm{id}})\) satisfying
\begin{equation}
\label{eq:sufficient_ratio}
\frac{f^{+}(\mb{x}^{\mathrm{id}})}
{\overline{f}^{++}(\mb{x}^{\mathrm{id}})}
\longrightarrow
\frac{e}{e-1},
\end{equation}
under the conditions $(i)-(iv)$, forcing both ratios to converge to the common limit $e/(e-1)$.
   For the rest of the proof, we proceed step by step, first establishing the necessary preliminaries, beginning with the setup and definitions in Section~\ref{subsec:setup_and_definitions}. This is followed by the optimal concave closure value with identical marginals in Section~\ref{subsec:concave_closure_identical_marginals}, and an upper bound on the pairwise extension with identical marginals through a scaled asymptotic reduced dual formulation in Section~\ref{subsec:pairwise_scaled_asymptotic_dual}, culminating in the detailed proof 
   in Section~\ref{sec:thm_main_result_asymptotic}.
\end{proof}

\begin{remark}
It was proved in~\cite{ramachandra_special_cases_OR_letters} that the correlation gap is upper bounded by $4/3$ for identical marginal probabilities satisfying
\[
x\le \frac{1}{n-1}
\quad\text{or}\quad
x\ge \frac{n-2}{n-1}.
\]
In the present construction, $
x_i=\frac1m$, where \(m=m(n)\) satisfies
$
m(n)=o(n),\; m(n)\to\infty
\mbox{ as } n\to\infty.$
Consequently, for all sufficiently large \(n\),
$
2\le m \le n-2
$ and therefore
\[
\frac1{n-1}<\frac1m<\frac{n-2}{n-1}.
\]
Therefore, the regime considered in Theorem~\ref{thm:main_result_asymptotic} lies strictly outside the parameter ranges covered in \cite{ramachandra_special_cases_OR_letters}, and leads to a different upper bound of $e/(e-1)$. 
\end{remark}

\begin{remark}
Under the assumption $
m=m(n)\to\infty,\;
m=o(n)$,
both the number of partition blocks and the size of each partition block diverge as
\(n\to\infty\). Indeed,
\begin{align}\label{al:T_and_Q_size_infty}
m\to\infty,
\quad
\ell=\left\lfloor\frac{n}{m}\right\rfloor\to\infty,
\quad
\ell_{m}=n-(m-1)\ell\ge\frac{n}{m}\to\infty
\quad \mbox{   as   } n\to\infty
\end{align}
\end{remark}

\begin{figure}[H]
\centering
\begin{tikzpicture}[x=1.25cm,y=1cm]
\draw (0,0) rectangle ++(1.05,0.7);
\draw (1.05,0) rectangle ++(1.05,0.7);
\draw (2.10,0) rectangle ++(1.05,0.7);
\draw (3.15,0) rectangle ++(1.05,0.7);

\draw (4.75,0) rectangle ++(1.35,0.7);

\node at (0.525,0.35) {$A_1$};
\node at (1.575,0.35) {$A_2$};
\node at (2.625,0.35) {$\cdots$};
\node at (3.675,0.35) {$A_{m-1}$};
\node at (5.425,0.35) {$A_m$};

\draw[decorate,decoration={brace,amplitude=5pt}]
    (0,1.02) -- (6.10,1.02)
    node[midway,yshift=0.45cm] {$n$};

\draw[decorate,decoration={brace,amplitude=5pt,mirror}]
    (0,-0.12) -- (4.20,-0.12)
    node[midway,yshift=-0.42cm]
    {$m-1\text{ blocks},\quad |A_r|=\ell$};

\draw[decorate,decoration={brace,amplitude=5pt,mirror}]
    (4.75,-0.12) -- (6.10,-0.12)
    node[midway,yshift=-0.42cm]
    {$|A_m|=n-(m-1)\ell$};

\node at (3.05,2.05)
    {$n\to\infty,\qquad m\to\infty,\qquad \ell\to\infty$};
\end{tikzpicture}
\caption{Schematic representation of the partition of the ground set $N$
into $m-1$ blocks of cardinality $\ell$ and a final block $A_m$ of
cardinality $n-(m-1)\ell$ in the asymptotic regime.}
\label{fig:asymptotic_partition}
\end{figure}
The next result shows that the asymptotic result in Theorem \ref{thm:main_result_asymptotic} immediately extends to $t$-wise independent random elements
($t\ge2$), since $t$-wise independence implies pairwise independence.
\begin{corollary}[Extension to $t$-wise independence]
Let $f^{(t)}(\mb{x})$, for $t>2$, denote the optimal value of the
$t$-wise independence formulation obtained by replacing the pairwise
independence constraints in \eqref{eq:pairwise_concave_closure} with
\[
\sum_{S\supseteq\{i_1,\ldots,i_r\}} p_S
=
\prod_{j=1}^r x_{i_j},
\qquad
\forall\, i_1<\cdots<i_r,\quad i_1,\ldots,i_r\in N,\quad
\forall\, 1\le r\le t.\] Then
\[
\frac{f^+(\mb x)}{f^{(t)}(\mb x)}
\le
\frac{e}{e-1}.
\] and the bound is attained under the the exact same conditions $(i)-(iv)$ in Theorem \ref{thm:main_result_asymptotic}.
\end{corollary}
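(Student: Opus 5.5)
The plan is a sandwich argument. The upper bound comes from the feasible-set inclusions, and the attainment comes from Theorem~\ref{thm:main_result_asymptotic} together with the inequality $f^{(t)}\le f^{++}$.

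\textbf{Upper bound.} Fix $t>2$ and let $\mathcal F^{(t)}(\mb x)$ denote the feasible set of the $t$-wise formulation. Its constraints with $r=1$ are exactly the marginal constraints of \eqref{eq:pairwise_concave_closure}, and those with $r=2$ are exactly the pairwise constraints. Hence
\[
\mathcal F^{(t)}(\mb x)\subseteq \mathcal F_{\mathrm P}(\mb x).
\]
The mutually independent distribution $\mb u_{\mathrm{ind}}$ satisfies every product constraint for every $r\le t$. Therefore $\mathcal F^{(t)}(\mb x)$ is nonempty and
\[
F(\mb x)\le f^{(t)}(\mb x)\le f^{++}(\mb x).
\]
Since $f\ge 0$, we may assume $F(\mb x)>0$; otherwise the ratio is degenerate in the same way as in \cite{Agrawal2012}. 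Dividing, we get
\[
\frac{f^+(\mb x)}{f^{(t)}(\mb x)}\le \frac{f^+(\mb x)}{F(\mb x)}\le \frac{e}{e-1},
\]
where the last inequality is the correlation gap bound of \cite{Agrawal2012} for monotone submodular $f$.

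\textbf{Attainment.} Take the instance in conditions (i)--(iv): the block partition, the product feature family $\mathcal H_m$, $x_i=1/m$, and $f=g_m$ with unit weights. Using $f^{(t)}\le f^{++}$, we obtain for every $n$
\[
\frac{f^+(\mb x^{\mathrm{id}})}{f^{++}(\mb x^{\mathrm{id}})}\le \frac{f^+(\mb x^{\mathrm{id}})}{f^{(t)}(\mb x^{\mathrm{id}})}\le \frac{e}{e-1}.
\]
By Theorem~\ref{thm:main_result_asymptotic}, the left-hand side tends to $e/(e-1)$ as $n\to\infty$, $m(n)\to\infty$, $m(n)=o(n)$. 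The middle ratio is squeezed between the left and right sides, so it converges to $e/(e-1)$ as well. This holds for each fixed $t$.

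If $t$ is allowed to grow with $n$, the same squeeze still works, because the inequality $f^{(t)}\le f^{++}$ needs no uniformity in $t$. The only care required is to take $t\le n$, so that the $r$-wise constraints are well defined.

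\textbf{Main obstacle.} There is no real obstacle here, since all the asymptotic work lives in Theorem~\ref{thm:main_result_asymptotic}. The one point to check is the direction of the inequality: a more restrictive feasible set lowers the denominator and so raises the ratio. This is why attainment for pairwise independence transfers to $t$-wise independence, while the upper bound transfers through the independent distribution $\mb u_{\mathrm{ind}}$.
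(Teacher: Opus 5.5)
Your proposal is correct and follows essentially the same route as the paper. The inclusion of the $t$-wise feasible set in $\mathcal{F}_{\mathrm P}(\mb x)$ gives $f^{(t)}\le f^{++}$, and feasibility of the mutually independent distribution gives $F\le f^{(t)}$ (the paper phrases this as $f^{(t)}\ge f^{(n)}$), hence the $e/(e-1)$ upper bound via \cite{Agrawal2012}; attainment then follows from Theorem~\ref{thm:main_result_asymptotic} by squeezing the $t$-wise ratio between the pairwise ratio and $e/(e-1)$.
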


\begin{proof}
Every $t$-wise independent distribution is, in particular, pairwise
independent. Hence the feasible region defining $f^{(t)}$ is contained
in that defining $f^{++}$ in formulation \eqref{eq:pairwise_concave_closure}, implying $
f^{(t)}(\mb{x}^{\mathrm{id}})\le f^{++}(\mb{x}^{\mathrm{id}})$. Therefore,
\[
\frac{f^+(\mb{x}^{\mathrm{id}})}{f^{++}(\mb{x}^{\mathrm{id}})}
\le
\frac{f^+(\mb{x}^{\mathrm{id}})}{f^{(t)}(\mb{x}^{\mathrm{id}})} \le \frac{f^+(\mb{x}^{\mathrm{id}})}{f^{(n)}(\mb{x}^{\mathrm{id}})}\le
\frac{e}{e-1},
\]
where the last inequality follows from the $e/(e-1)$ upper bound with mutual independence in \cite{Agrawal2012}. The result then follows from Theorem \ref{thm:main_result_asymptotic}, since $
\frac{f^+(\mb{x}^{\mathrm{id}})}{f^{++}(\mb{x}^{\mathrm{id}})}$ attains the bound
$e/(e-1)$ asymptotically, forcing all ratios to converge to the common limit $e/(e-1)$.
\end{proof}
\subsection{Preliminaries leading to proof of Theorem \ref{thm:main_result_asymptotic}}\label{subsec:preliminaries_main_theorem}
We now proceed to prove tightness of the $e/(e-1)$ bound in Theorem \ref{thm:main_result_asymptotic}. In preparation, we next reiterate the tightness conditions $(i)-(iv)$ in Theroem \ref{thm:main_result_asymptotic} along with some additional definitions needed to set up the tight instance.
\subsubsection{Setup and definitions}\label{subsec:setup_and_definitions}
In line with the tightness conditions $(i)-(iv)$ in Theroem \ref{thm:main_result_asymptotic}, we setup the problem instance as follows: 
\begin{enumerate}[label=\roman*)]
\item
The ground set is partitioned into $m$ blocks as $N=A_1\cup A_2\cup\cdots\cup A_{m-1}\cup A_m$ 
where the first $m-1$ blocks 
have equal cardinality $ 
|A_r|=\ell=\lfloor \frac nm\rfloor,\; r \in M \setminus \{m\},
$  and the last block consisting of the remaining elements has cardinality $|A_m|=\ell_{m}=n-(m-1)\ell$. 
\item 
Further, let $M=\{1,2,\ldots,m\}$. Then for every subset \(S\subseteq N\), define the block count $
a_r=|S\cap A_r|,\;r \in M$ as the number of elements from block $A_r$ contained in S. Then
$
\sum_{r=1}^m a_r=|S|,
$
since $\{A_r:r\in M\}$  partitions the ground set $N$.
\item 
The feature family $
\mathcal H_m=A_1\times A_2\times\cdots\times A_{m-1}\times A_m$, 
consists of all $m$-tuples obtained by selecting one element from each set of 
the partition.
\item
 Define the set function $f: 2^{N} \rightarrow \mathbb{R}_+$ as the monotone submodular union coverage function
\begin{equation}\label{eq:union_coverage_set_function_formula}
\begin{array}{ll}
f(S)&=g_m(S)\vspace{0.2cm}\\
&=
\sum_{H\in\mathcal H_m}
\mb 1\{S\cap H\neq\varnothing\},\vspace{0.2cm} \\
 &=\ell^{m-1}\ell_{m}-\prod_{r=1}^{m-1}(\ell-a_r)(\ell_{m}-a_m).
\end{array}
\end{equation}
for each $S \subseteq N$ where $\ell^{m-1}\ell_{m}$ is the total number of features in the feature family $
\mathcal H_m=A_1\times A_2\times\cdots\times A_{m-1}\times A_m$ and the subtracted term  
$
\prod_{r=1}^{m-1}(\ell-a_r)(\ell_{m}-a_m)$ represents the number of uncovered features. Thus, $f(S)$ counts the total number of features covered by at least one element of $S$.
\item For a subset \(S\subseteq N\), define  $\mb{u}(S)=(u_1(S),\ldots,u_m(S)) \in [0,1]^m$ where 
$$
u_r(S)=\dfrac{a_r}{\ell},\; r\in M \setminus \{m\},\;
u_m(S)=\dfrac{a_m}{\ell_{m}}$$ 
denote the normalized block counts of \(S\), and satisfy $
\ell\sum_{r=1}^{m-1}u_r(S)+Qu_m(S)=|S|
$. Then the set function $f(S)$ in \eqref{eq:union_coverage_set_function_formula}
can be transformed into a normalized coverage polynomial $F: [0,1]^m \rightarrow [0,1]$ as
\begin{align}\label{al:normalized_set_function}
    F(\mb{u}(S))=\frac{f(S)}{\ell^{m-1}\ell_{m}}
    =
    1-\prod_{r=1}^m(1-u_r(S)), \quad \forall S \subseteq N
\end{align}
We assume that all expectations in the definitions of $f^+(\mb{x})$ and $f^{++}(\mb{x})$ in \eqref{eq:concave_closure} and \eqref{eq:pairwise_concave_closure} are henceforth 
computed with respect to $F(\mb{u}(S))$ instead
of $f(S)$. Additionally, for notational convenience, we drop the dependence on $S$ 
 (unless necessary in the context of a statement) 
 and write $F(u)=
1-\prod_{r=1}^m(1-u_r)$.
\item Set the marginal probabilities to
$x_i=\dfrac1m$ for every $i\in N$.
\end{enumerate}
 We next derive the optimal solution with identical marginals to the concave closure in \eqref{eq:concave_closure} by identifying a worst case distribution. 
\subsubsection{Concave closure with identical probabilities and normalized coverage function}\label{subsec:concave_closure_identical_marginals}
 Since the normalized block counts satisfy $u_r(A_r)=1$ for every $r \in M$, the normalized coverage function defined in \eqref{al:normalized_set_function} satisfies $F(u(A_r))=1$ for every $r \in M$. Consequently, the concave closure in \eqref{eq:concave_closure} satisfies $f^{+}(\mb{x}^{\mathrm{id}})=1$, attained by the distribution
$$
\mb{p}^*(S)=
\begin{cases}
\dfrac1m, & S\in\{A_1,A_2,\ldots,A_m\},\\[0.2cm]
0, & \text{otherwise}.
\end{cases}
$$
Thus, it suffices to show that
\[
f^{++}(\mb{x}^{\mathrm{id}})
\longrightarrow
1-\frac1e
\]
as \(n\to\infty\) with \(m=m(n)\to\infty\) and \(m=o(n)\), so that
$$
\frac{f^{+}(\mb{x}^{\mathrm{id}})}
     {f^{++}(\mb{x}^{\mathrm{id}})}
\longrightarrow
\frac{e}{e-1}.
$$
In the next section, we construct an asymptotic reduced dual formulation with identical marginals whose optimal value upper bounds $f^{++}(\mb{x}^{\mathrm{id}})$ and asymptotically converges to $1-\frac1e$.

\subsubsection{Pairwise independent asymptotic reduced dual formulation with identical marginals}\label{subsec:pairwise_scaled_asymptotic_dual}

The dual of the pairwise independent extension in
\eqref{eq:pairwise_concave_closure} with the normalized submodular function in \eqref{al:normalized_set_function} can be written as:

\begin{equation}
\begin{array}{ll}
\displaystyle
f^{++}(\mb{x})
=
\min
&
\displaystyle
\lambda_0
+
\sum_{i\in N}\lambda_i x_i
+
\sum_{i<j}\lambda_{ij}x_ix_j
\\[2ex]
\mathrm{s.t.}
&
\displaystyle
\lambda_0
+
\sum_{i\in S}\lambda_i
+
\sum_{\{i,j\}\subseteq S}\lambda_{ij}
\ge
F(u(S)),
\qquad
\forall S\subseteq N.
\end{array}
\label{eq:pairwise_dual}
\end{equation}
where all dual variables are unrestricted due to the equality constraints in \eqref{eq:pairwise_concave_closure}. We next derive a reduced dual formulation of \eqref{eq:pairwise_dual} for identical marginals $\mb{x}=\mb{x}^{\mathrm{id}}$ by imposing symmetry restrictions on the dual variables $\lambda_i,\lambda_{ij}$, followed by an asymptotic reduction after appropriate scaling of the dual variables .
\begin{lemma}[Reduced dual formulation]
\label{lem:reduced_dual}
 The dual formulation in \eqref{eq:pairwise_dual} reduces to
\begin{equation}\label{eq:reduced_dual}
\begin{array}{ll}
\displaystyle
f^{++}_{\mathrm{RD}}(\mb{x}^{\mathrm{id}})
=
\underset{\footnotesize A,B,C,D}{\scalebox{1.25}{$\min$}}
&
\displaystyle
A
+B\frac{n}{m}
+\frac{D}{m^{2}}
\left((m-1)\binom{\ell}{2}+\binom{\ell_{m}}{2}\right)
+\frac{E}{m^{2}}
\left(\binom{m-1}{2}\ell^{2}+(m-1)\ell\ell_{m}\right)
\\[3ex]
\mathrm{s.t.}
&
\displaystyle
A
+B\sum_{r=1}^{m}a_r
+D\sum_{r=1}^{m}\binom{a_r}{2}
+E\sum_{1\le r<s\le m}a_ra_s
\ge
F(u(S)),
\qquad
\forall S\subseteq N.
\end{array}
\end{equation}
which provides an upper bound on $f^{++}(\mb{x}^{\mathrm{id}})$ \emph{i.e.},
\[
f^{++}(\mb{x}^{\mathrm{id}})
\le
f^{++}_{\mathrm{RD}}(\mb{x}^{\mathrm{id}}).
\]
\end{lemma}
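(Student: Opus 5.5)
The plan is to obtain \eqref{eq:reduced_dual} by restricting the dual \eqref{eq:pairwise_dual} to a symmetric subfamily of dual variables, and then to invoke weak duality. Restricting the feasible set of a minimization problem can only increase its optimal value, so any feasible point of \eqref{eq:reduced_dual} should correspond to a feasible point of \eqref{eq:pairwise_dual} with the same objective value. Concretely, for scalars $A,B,D,E$ I set
\[
\lambda_0=A,\qquad \lambda_i=B\ \ \forall i\in N,\qquad
\lambda_{ij}=\begin{cases} D, & i,j\in A_r \text{ for some } r\in M,\\ E, & i\in A_r,\ j\in A_s,\ r\neq s.\end{cases}
\]
This choice respects the symmetry of the instance, because $F(u(S))$ depends on $S$ only through the block counts $(a_1,\ldots,a_m)$, and all marginals are identical.

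The first step is to rewrite the dual constraints under this substitution. For a fixed $S\subseteq N$ with block counts $a_r=|S\cap A_r|$, I have $\sum_{i\in S}\lambda_i=B|S|=B\sum_r a_r$. The pairs $\{i,j\}\subseteq S$ split into two kinds:
\begin{itemize}
\item $\sum_r\binom{a_r}{2}$ pairs lying inside a common block, each contributing $D$;
\item $\sum_{r<s}a_ra_s$ pairs spanning two different blocks, each contributing $E$.
\end{itemize}
The constraint of \eqref{eq:pairwise_dual} for $S$ therefore becomes exactly the constraint of \eqref{eq:reduced_dual} for $S$. Hence every feasible $(A,B,D,E)$ yields a feasible dual vector $(\lambda_0,\lambda_i,\lambda_{ij})$.

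The second step is to evaluate the dual objective at $\mb{x}^{\mathrm{id}}$:
\begin{itemize}
\item $\sum_i\lambda_i x_i=B\,n/m$.
\item Every pair has $x_ix_j=1/m^2$, so $\sum_{i<j}\lambda_{ij}x_ix_j$ equals $1/m^2$ times ($D$ times the number of within-block pairs plus $E$ times the number of cross-block pairs).
\item There are $m-1$ blocks of size $\ell$ and one of size $\ell_m$, so the number of within-block pairs is $(m-1)\binom{\ell}{2}+\binom{\ell_m}{2}$.
\item Counting cross-block pairs among the first $m-1$ blocks gives $\binom{m-1}{2}\ell^2$, and pairs involving $A_m$ give $(m-1)\ell\ell_m$.
\end{itemize}
This reproduces the objective of \eqref{eq:reduced_dual}.

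The final step is the duality argument. Problem \eqref{eq:pairwise_concave_closure} is feasible, since the independent distribution is feasible, and its objective is bounded, since $0\le F\le 1$. By weak duality, every feasible dual vector has objective value at least $f^{++}(\mb{x}^{\mathrm{id}})$. Taking the minimum over the restricted family gives $f^{++}(\mb{x}^{\mathrm{id}})\le f^{++}_{\mathrm{RD}}(\mb{x}^{\mathrm{id}})$. The reduced problem is also feasible, for example $A=1$ and $B=D=E=0$ works because $F\le1$, so its value is well defined and finite.

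I do not expect a genuine obstacle here. The only care needed is in the pair counting, where the unequal last block $A_m$ must be treated separately in both the within-block and the cross-block terms.
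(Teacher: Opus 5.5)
Your proposal is correct and matches the paper's proof: the same symmetric restriction $\lambda_0=A$, $\lambda_i=B$, $\lambda_{ij}\in\{D,E\}$ according to block membership, the same within-block and cross-block pair counts (with $A_m$ handled separately), and the same observation that restricting a minimization problem can only raise its optimal value. Your explicit appeal to weak duality and your feasibility check with $A=1$, $B=D=E=0$ are small additions that the paper leaves implicit.
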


\begin{proof}
Let $\lambda_0=A$ and impose the following symmetry restrictions on the remaining dual variables:
\begin{align}
\label{al:symmetry_dual}
\lambda_i=B,\quad \forall i\in N,
\qquad
\lambda_{ij}=
\begin{cases}
D,& i,j\in A_r,\ \text{for some }r\in M,\\
E,& i\in A_r,\ j\in A_s,\ r\neq s.
\end{cases}
\end{align}
Substituting these restricted dual variables into the objective of
\eqref{eq:pairwise_dual} along with $x_i=\frac1m$ for each $i\in N$, we obtain
\[
\lambda_0
+
\sum_{i\in N}\lambda_i x_i
+
\sum_{i<j}\lambda_{ij}x_ix_j
=
A+\frac{Bn}{m}
+\frac{D}{m^{2}}
\left((m-1)\binom{\ell}{2}+\binom{\ell_{m}}{2}\right)
+\frac{E}{m^{2}}
\left(\binom{m-1}{2}\ell^{2}+(m-1)\ell\ell_{m}\right)
\]
since there are
$(m-1)\binom{\ell}{2}+\binom{\ell_{m}}{2}$ pairs of elements belonging to the same partition block and
$\binom{m-1}{2}\ell^{2}+(m-1)\ell\ell_{m}$ pairs belonging to different partition blocks.

Similarly, for the constraints, let $S \subseteq N$ be an arbitrary subset. Recalling that
$a_r=|S\cap A_r|$ and $\{A_r:r\in M\}$ forms a partition of $N$, we have
\[
\sum_{i\in S}\lambda_i
=
B\sum_{r=1}^{m}a_r,
\]
and
\[
\sum_{\{i,j\}\subseteq S}\lambda_{ij}
=
D\sum_{r=1}^{m}\binom{a_r}{2}
+
E\sum_{1\le r<s\le m}a_ra_s,
\]
since every pair of elements of \(S\) either belongs to the same partition block or to two distinct partition blocks. Substituting these expressions into the constraints of
\eqref{eq:pairwise_dual} yields \eqref{eq:reduced_dual}. Since the reduced dual is obtained from the original dual by imposing the symmetry restrictions in \eqref{al:symmetry_dual}, the feasible region of the reduced dual is a subset of that of the original dual, implying $
f^{++}(\mb{x}^{\mathrm{id}})
\le
f^{++}_{\mathrm{RD}}(\mb{x}^{\mathrm{id}})$.
\end{proof}
We next scale the reduced dual variables $B,D,E$ by $n,n^2,n^2$ respectively and derive an asymptotic reduced dual as $n \to \infty$ with the number of partition blocks $m$ growing sublinearly with \(n\) \emph{i.e.}, $m(n)=o(n)\to\infty
$. 
\begin{lemma}[Scaled asymptotic reduced dual (SARD) formulation]
\label{lem:reduced_dual_scaled_asymptotic}
\noindent Under the asymptotic regime $
n\to\infty,\; m(n)=o(n)\to\infty
$, introduce the scaled dual variables
\begin{equation}\label{eq:scaled_variables}
\widetilde{B}=\frac{B}{n},
\qquad
\widetilde{D}=\frac{D}{n^2},
\qquad
\widetilde{E}=\frac{E}{n^2},
\end{equation}
and, for notational convenience, subsequently relabel
$\widetilde{B},\widetilde{D},\widetilde{E}$ as $B,D,E$. Then the reduced dual \eqref{eq:reduced_dual} is asymptotically equivalent to
\begin{equation}\label{eq:reduced_dual_scaled_asymptotic}
\begin{array}{lll}
f^{++}_{\footnotesize\mathrm{SARD}}(\mb{x}^{\mathrm{id}})
=\displaystyle
\underset{\footnotesize A,B,C,D}{\scalebox{1.1}{$\min$}}
&
\displaystyle
A
+\frac{B}{m}
+\frac{D}{2m^3}
+\frac{1}{2m^2}\left( 1-\frac1m \right)E
\\[4ex]
\mathrm{s.t.}
&
\displaystyle
A
+\frac{B}{m}\sum_{r=1}^{m}u_r
+\frac{D}{2m^2}\sum_{r=1}^{m}u_r^2
+\frac{E}{m^2}
\sum_{1\le r<s\le m}u_ru_s
\ge
1-\prod_{r=1}^{m}(1-u_r),&\forall\,\mb{u}\in[0,1]^m.
\\[2ex]
\end{array}
\end{equation}
\end{lemma}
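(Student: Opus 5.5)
The plan is to pass from the reduced dual \eqref{eq:reduced_dual} to \eqref{eq:reduced_dual_scaled_asymptotic} in three moves. First rewrite the objective and the constraints of \eqref{eq:reduced_dual} in terms of the normalized block counts $u_r(S)$ and the rescaled multipliers. Then show that every coefficient agrees with its SARD counterpart up to a factor $1+o(1)$ or an additive $o(1)$. Finally compare the constraint sets: the finite set of realizable $\mb u(S)$ against the continuum $[0,1]^m$. Since the statement is an asymptotic \emph{equivalence}, both directions must be shown. A point feasible for one problem must yield, after an $o(1)$ correction, a point feasible for the other with objective value changed by $o(1)$. Throughout I use only $m\to\infty$, $m=o(n)$, and the consequence \eqref{al:T_and_Q_size_infty} that $\ell,\ell_m\to\infty$ with $\ell=\frac nm(1+o(1))$, $\ell_m=\frac nm(1+o(1))$.

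\emph{Step 1 (coefficient matching).} Substitute $a_r=\ell u_r$ for $r<m$ and $a_m=\ell_m u_m$. Use $\binom{a_r}{2}=\tfrac12(a_r^2-a_r)$, $n/m=\ell(1+o(1))$, and the rescaling \eqref{eq:scaled_variables}. The rescaling has to be read so that the original multipliers are of order $1/n$ and $1/n^2$, in order that $B_{\rm orig}a_r\approx \frac Bm u_r$; I will fix this normalization explicitly. Then the linear term becomes $\frac Bm\sum_r u_r(1+o(1))$. The same-block quadratic term becomes $\frac{D}{2m^2}\sum_r u_r^2(1+o(1))-O\!\left(\frac{|D|}{nm}\right)\sum_r u_r$, and the cross term becomes $\frac{E}{m^2}\sum_{r<s}u_ru_s(1+o(1))$. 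For the objective, $(m-1)\binom{\ell}{2}+\binom{\ell_m}{2}=\frac{n^2}{2m}(1+o(1))$ and $\binom{m-1}{2}\ell^2+(m-1)\ell\ell_m=\frac{n^2}{2}(1-\frac1m)(1+o(1))$. These produce exactly $\frac{D}{2m^3}$ and $\frac{E}{2m^2}(1-\frac1m)$ after scaling. The leftover $-a_r/2$ pieces are absorbed into the $B$-term, since they are linear in $u_r$ and of relative size $O(m/n)=o(1)$.

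\emph{Step 2 (a priori bounds on multipliers).} The $(1+o(1))$ factors only yield $o(1)$ additive errors if the scaled multipliers, multiplied by the sizes of the terms they weight ($\frac1m\sum u_r\le1$, $\frac1{m^2}\sum u_r^2\le \frac1m$, $\frac1{m^2}\sum_{r<s}u_ru_s\le\frac12$), stay bounded. I would restrict both problems to near-optimal points, with objective at most $1$; this is harmless because $A=1$, $B=D=E=0$ is feasible for both. Evaluating the constraints at $\mb u=\mb 0$ gives $A\ge0$. Evaluating at $\mb u=\mb 1$ and at the vertices $\mb e_r$ and their averages then bounds $B$, $D/m$ and $E$ in terms of the objective value. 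If such bounds fail in the raw variables, I would first symmetrize (average over permutations of blocks) so that only these four scalars matter.

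\emph{Step 3 (grid versus continuum).} The vectors $\mb u(S)$ range exactly over the product grid $\prod_{r<m}\{0,\frac1\ell,\dots,1\}\times\{0,\frac1{\ell_m},\dots,1\}$, so SARD feasibility implies feasibility at all grid points. For the converse, given $\mb u\in[0,1]^m$, round each coordinate down to the grid. By monotonicity $F$ can only decrease, and the left-hand side changes by at most a Lipschitz error. Per coordinate, $\frac{|B|}{m}\cdot\frac1\ell+\frac{|D|}{m^2}\cdot\frac1\ell+\frac{|E|}{m^2}\cdot\frac{m}{\ell}$ is the bound, so the total over $m$ coordinates is $O\!\left(\frac{|B|+|E|}{\ell}+\frac{|D|}{m\ell}\right)=o(1)$ by Step 2. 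Raising $A$ by this $o(1)$ restores feasibility on all of $[0,1]^m$ while changing the objective by $o(1)$. Conversely, starting from SARD-feasible points, the Step 1 errors are uniformly $o(1)$ on the grid, and the same $A$-shift gives feasibility for \eqref{eq:reduced_dual}. Together the two directions give $f^{++}_{\mathrm{RD}}(\mb x^{\mathrm{id}})-f^{++}_{\mathrm{SARD}}(\mb x^{\mathrm{id}})\to0$.

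\emph{Main obstacle.} The hard part is Step 2 combined with the uniformity of the Step 1 errors. The sums contain $m\to\infty$ terms and $\binom m2$ cross terms, so a naive $O(1/\ell)$ error per term accumulates to $O(m^2/\ell)$. That quantity need not vanish, because only $m=o(n)$ is assumed. My first attempt would exploit that all errors are \emph{relative}: each is a $(1+o(1))$ multiple of a term already normalized by $1/m$ or $1/m^2$, with sums bounded by $1$. This uniformity must be justified carefully. If it fails, I would fall back on a bounded reparametrization, writing everything in terms of $s_1=\frac1m\sum u_r$ and $s_2=\frac1m\sum u_r^2$, both in $[0,1]$, and proving the error estimates in these aggregate quantities.
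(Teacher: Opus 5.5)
\noindent The step that fails is Step~2, and Steps~1 and~3 both rely on it.

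\textbf{Step~2 is false.} The objective of \eqref{eq:reduced_dual_scaled_asymptotic} equals $Q(u^\star)$ with $u^\star=\frac1m\mathbf{1}$. The constraint at a unit vector $u=e_r$ reads $A+\frac Bm+\frac{D}{2m^2}\ge 1$. Subtracting it from the objective gives
\[
\text{objective}\;\ge\; 1-\frac{(m-1)(D-E)}{2m^3}.
\]
So any point whose objective is near the target $1-(1-\frac1m)^m\approx 1-\frac1e$ has $D-E\gtrsim 2m^2/e$. Hence either $|D|/m$ is at least of order $m$, or $|E|$ is at least of order $m^2$. Near-optimality therefore forces exactly the growth you wanted to rule out. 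Worse, the map $(A,B,D,E)\mapsto(A+\frac{t}{2m^3},\,B-\frac{t}{m^2},\,D+t,\,E)$ adds $\frac{t}{2m^2}\|u-u^\star\|^2\ge 0$ to every left-hand side and leaves the objective unchanged. So the objective gives no upper bound on $D$ at all.

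\textbf{The true magnitudes break Steps~1 and~3.} The paper's own feasible point has $E=-m^2(1-\frac1m)^{m-2}\approx -m^2/e$, and its constraint at $u=\mathbf{1}$ forces $D\gtrsim m^3/e$. With these sizes:
\begin{itemize}
\item your Step~3 estimate $O\big(\frac{|B|+|E|}{\ell}+\frac{|D|}{m\ell}\big)$ is at least of order $m^3/n$;
\item the $-a_r/2$ leftover in Step~1 reaches $\frac{D}{2n}$ at $S=N$;
\item absorbing that leftover into $B$ instead costs about $\frac{D}{2nm}$ in the objective.
\end{itemize}
None of these is $o(1)$ when only $m=o(n)$ is assumed. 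So the error accumulation you flagged as the main obstacle is real, and Step~2 does not remove it.

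\textbf{Two smaller problems.}
\begin{itemize}
\item The premise $\ell_m=\frac nm(1+o(1))$ is false. Write $\ell=\frac nm-\theta$ with $\theta\in[0,1)$. Then $\ell_m=\frac nm+(m-1)\theta$, so $\frac{m\ell_m}{n}=1+\frac{m(m-1)\theta}{n}$. This is unbounded when $m^2/n\to\infty$ and $\theta$ stays away from $0$. Any relative $(1+o(1))$ bookkeeping for terms involving the last block, including your fallback, therefore needs $m^2=o(n)$.
\item In Step~3, rounding down gives $F(\lfloor u\rfloor)\le F(u)$, which is the wrong inequality. You must round up.
\end{itemize}

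\textbf{Comparison with the paper.} The paper's proof uses the same expansion-plus-density argument. It treats $A,B,D,E$ as fixed constants and declares its $O(m^2/n)$ constraint error to be $o(1)$, so it does not supply the control you were after either.

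A rigorous version would need three things:
\begin{itemize}
\item It would prove only $f^{++}_{\mathrm{RD}}\le f^{++}_{\mathrm{SARD}}+o(1)$. This is the direction used in Theorem~\ref{thm:main_result_asymptotic}, and it needs no grid-to-continuum step, since SARD feasibility on $[0,1]^m$ already covers every realizable $u(S)$.
\item It would fix one explicit SARD-feasible point with a quantitative bound on $D$ in terms of $m$. The compactness argument giving $D^\star<\infty$ provides no rate.
\item It would impose a correspondingly stronger restriction on the growth of $m(n)$.
\end{itemize}
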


\begin{proof}
The objective in \eqref{eq:reduced_dual} can be written in terms of the scaled variables in \ref{eq:scaled_variables} as:
\begin{align}
\label{al:objective_asymptotic_dual}
\Phi_n
=
A
+\frac{B}{n}\frac{n}{m}
+\frac{D}{n^{2}m^{2}}
\left((m-1)\binom{\ell}{2}+\binom{\ell_{m}}{2}\right)
+\frac{E}{n^{2}m^{2}}
\left(\binom{m-1}{2}\ell^{2}+(m-1)\ell\ell_{m}\right).
\end{align}
\noindent Since 
$
\ell=\left\lfloor\dfrac{n}{m}\right\rfloor
=\dfrac{n}{m}+O(1),
$
it follows that
$
\ell_{m}
=
n-(m-1)\ell
=
\dfrac{n}{m}+O(m),
$
and hence, under the condition
$ m(n)=o(n)$, we have :
\[
\frac1{n^{2}}\binom{\ell}{2}
=
\frac1{2m^{2}}+O\!\left(\frac1{mn}\right),
\qquad
\frac1{n^2}\binom{\ell_{m}}{2}
=
\frac1{2m^2}
+
O\!\left(\frac1n\right)
\]
\vskip 5pt
\[
\frac{\ell^2}{n^{2}}
=\frac{1}{m^{2}}
+
O\!\left(\frac{1}{mn}\right),\quad  
\frac{\ell\ell_{m}}{n^{2}}=
\frac1{m^2}
+
O\!\left(\frac1{n}\right)
\]
Substituting these  expansions into \eqref{al:objective_asymptotic_dual} under the asymptotic regime $
n\to\infty,\; m(n)=o(n)\to\infty
$ yields:
\[
\Phi_n
=A
+\frac{B}{m}
+\frac{D}{2m^{3}}
+\left( 1-\frac1m \right)\frac{E}{2m^2}
+O\!\left(\frac1{mn}\right)=
A
+\frac{B}{m}
+\frac{D}{2m^{3}}
+\left( 1-\frac1m \right)\frac{E}{2m^2}
+o(1).
\]

\noindent For the constraints, substituting the scaled variables into the reduced dual
constraint \eqref{eq:reduced_dual} gives
\begin{align}\label{al:constraint_asymptotic_dual}
A
+\frac{B}{n}\sum_{r=1}^{m}a_r
+\frac{D}{n^{2}}
\sum_{r=1}^{m}\binom{a_r}{2}
+\frac{E}{n^{2}}
\sum_{1\le r<s\le m}a_ra_s
\ge
\frac{f(S)}{\ell^{m-1}\ell_{m}}, \qquad \forall S \subseteq N.
\end{align}

\noindent 
Since $
|A_r|=\ell=\dfrac{n}{m}+O(1)$, for $r\in M \setminus \{m\}$ and 
$|A_m|=\ell_{m}=\dfrac{n}{m}+O(m)$, under the condition $
m(n)=o(n)
$, it follows that for any $r \in M$,
\[
\frac{|A_r|}{n}
=
\frac1m+O\!\left(\frac1n\right) \qquad \mbox{ and }\qquad  \frac{u_r|A_r|}{n}
=
\frac {u_r}m+O\!\left(\frac1n\right),\; \mbox{ since $u_r \in [0,1]$}
\]

\noindent Therefore, since $
a_r=|A_r|u_r$ and $u_r \in [0,1]$, we have 
\[
\frac{a_r}{n}
=
\frac{u_r}{m}
+
O\!\left(\frac1n\right),
\qquad
\frac1{n^2}\binom{a_r}{2}
=
\frac{|A_r|^2u_r^2}{2n^2}+O\left(\frac1{mn}\right)
\]
and \[
\frac1{n^2}a_ra_s
=
\frac1{n^2}|A_r||A_s|u_ru_s
=
\frac{1}{m^2}u_ru_s+O\left(\frac1n\right).
\]

\noindent Substituting these expansions into \eqref{al:constraint_asymptotic_dual}yields:
\[
A
+\frac{B}{m}\sum_{r=1}^{m}u_r
+\frac{D}{2m^{2}}
\sum_{r=1}^{m}u_r^{2}
+\frac{E}{m^{2}}
\sum_{1\le r<s\le m}u_ru_s
+O\left(\frac{m^2}{n}\right)\ge
\frac{f(S)}{\ell^{m-1}\ell_{m}}, \qquad \forall S \subseteq N.
\] and hence under the asymptotic regime 
$
n\to\infty,\; m(n)=o(n)\to\infty
$, we have
\begin{align}\label{al:constraint_discrete_u_r}
=A
+\frac{B}{m}\sum_{r=1}^{m}u_r
+\frac{D}{2m^{2}}
\sum_{r=1}^{m}u_r^{2}
+\frac{E}{m^{2}}
\sum_{1\le r<s\le m}u_ru_s
+o(1)
\ge
1-\prod_{r=1}^{m}(1-u_r)
\end{align}
where the inequality in \eqref{al:constraint_discrete_u_r} is required to hold for all  subsets $S\subseteq N$.
The family of occupancy vectors $
\mb{u}(S)=(u_1(S),\ldots,u_m(S))$ induced by all subsets \(S\subseteq N\) becomes \textit{dense} as \(n\to\infty\). Indeed, for any \(\mb{u}\in[0,1]^m\), choosing
\[
a_r=\lfloor u_r|A_r|\rfloor,\qquad r\in M,
\]
it is always possible to assemble a subset \(S\) by choosing any $a_r$ elements from each set $A_r$ and taking their union so that the normalized occupancy vector $\left(\dfrac{a_r}{|A_r|}:r \in M\right)$ approximates $\mb{u}$ arbitrarily closely. Hence, for all $r \in M$, we have
\[
0 \le u_r-\dfrac{a_r}{|A_r|}
\le
\dfrac1{|A_r|}, \qquad 
\]
where the difference converges to zero since \(|A_r|\to\infty\) for all $r \in M$ from \eqref{al:T_and_Q_size_infty}. 
Consequently, the set of constraints in \eqref{al:constraint_discrete_u_r} can be equivalently expressed as 
\begin{align}\label{al:constraint_continuous_u_r_hypercube}
=A
+\frac{B}{m}\sum_{r=1}^{m}u_r
+\frac{D}{2m^{2}}
\sum_{r=1}^{m}u_r^{2}
+\frac{E}{m^{2}}
\sum_{1\le r<s\le m}u_ru_s
\ge
1-\prod_{r=1}^{m}(1-u_r),\;\;\forall\,\mb{u}\in[0,1]^m.
\end{align}
For a fixed $S \subseteq N$, the RHS term in \eqref{al:constraint_continuous_u_r_hypercube} is the normalized coverage polynomial defined in \eqref{al:normalized_set_function} and can be viewed as the multilinear extension (defined earlier) of the submodular set function 
$f(R)=\min\{|R|,1\}$ with $R \subseteq M$ and marginal probabilities $u_r(S)$.
This completes the proof.
\end{proof}

\subsection{Proof of Theorem \ref{thm:main_result_asymptotic}}\label{sec:thm_main_result_asymptotic}
We are now ready to prove the main result of this section in Theorem \ref{thm:main_result_asymptotic} by utilizing an upper bound on $f^{++}(\mb{x})$ provided by the scaled asymptotic reduced dual (SARD) formulation in Lemma \ref{lem:reduced_dual_scaled_asymptotic}.
\begin{proof}
By Lemma's \ref{lem:reduced_dual} and \ref{lem:reduced_dual_scaled_asymptotic}, as
\(n\to\infty\), we have:
\[
f^{++}(\mb{x}^{\mathrm{id}})
\le
f^{++}_{\mathrm{SARD}}(\mb{x}^{\mathrm{id}}).
\]
From \eqref{eq:sufficient_ratio}, it is thus sufficient to construct a feasible solution for the scaled asymptotic reduced dual
in \eqref{eq:reduced_dual_scaled_asymptotic}
such that 
\begin{align*} 
f^{++}_{\footnotesize\mathrm{SARD}}(\mb{x}^{\mathrm{id}})
\longrightarrow
1-\frac1e.
\end{align*}

We express the (uncountably infinite) set of constraints in \eqref{eq:reduced_dual_scaled_asymptotic} as:
\begin{equation}
\label{eq:Q_F_feasibility}
\begin{gathered}
Q(\mb{u})\ge F(\mb{u}),
\qquad \forall\,\mb{u}\in[0,1]^m,
\\[5pt]
\mbox{where }\quad
Q(\mb{u})
=
A
+\frac{B}{m}\sum_{i=1}^{m}u_i
+\frac{D}{2m^2}\sum_{i=1}^{m}u_i^2
+\frac{E}{m^2}\sum_{1\le i<j\le m}u_i u_j,
\;\mbox{ and }\;
F(\mb{u})=1-\prod_{r=1}^{m}(1-u_r)
\end{gathered}
\end{equation}
are quadratic and $m^{\text{th}}$ degree polynomials in $\mb{u}$ respectively. We next construct an explicit feasible solution that satisfy these constraints by imposing the following conditions at the symmetric point of
the normalized occupancy vector $
\mb{u}^\star
=
\left(
\frac1m,\ldots,\frac1m
\right)$
 by:
 \begin{equation}
\begin{aligned}\label{al:second_order_conditions_Q_F}
Q(\mb{u}^\star)&=F(\mb{u}^\star),\\[3pt]
\nabla Q(\mb{u}^\star)&=\nabla F(\mb{u}^\star),\\[3pt]
\frac{\partial^2Q}{\partial u_i\partial u_j}(\mb{u}^\star)
&=
\frac{\partial^2F}{\partial u_i\partial u_j}(\mb{u}^\star),
\qquad
i\neq j.
\end{aligned}
\end{equation}
These conditions require $Q$ and $F$ to agree in their value, first
derivatives, and mixed second derivatives at $\mb{u}^\star$. Solving these equations yields expressions for the dual variables $A,B,E$ in terms of $D$ as follows:
\begin{equation}
\begin{array}{lll}\label{eq:asymptotic_dual_feasible_solution}
E
&=
-m^2\left(1-\dfrac1m\right)^{m-2},\\[10pt]
A
&=
\dfrac{D}{2m^3}
+
\left[1-
\left(1-\dfrac1m\right)^m
\right]-
\frac32
\left(1-\dfrac1m\right)^{m-1},\\[10pt]
B
&=
-
\dfrac{D}{m^2}
+
2m
\left(1-\dfrac1m\right)^{m-1}.
\end{array}
\end{equation}
It now remains to prove that the resulting quadratic polynomial $Q(\mb{u},D)$ satisfies \eqref{eq:Q_F_feasibility} for a finite free parameter \(D\), \emph{i.e.},
\[
\exists D \in \mathbb{R}:\;G(\mb{u},D)=Q(\mb{u},D)-F(\mb{u}) \ge 0, \;\;\forall \mb{u}\in[0,1]^m
\]
Since $A,B$ are affine in $D$ while $F$ is independent of $D$, we have: 
\begin{align*}
\frac{\partial G}{\partial D}
=
\frac{\partial Q}{\partial A}\frac{\partial A}{\partial D}+\frac{\partial Q}{\partial B}\frac{\partial B}{\partial D}+\frac{\partial Q}{\partial D}
&=
\frac1{2m^3}-\frac1{m^3}\sum_{i=1}^m u_i+\frac1{2m^2}\sum_{i=1}^m u_i^2
=
\frac1{2m^2}\sum_{i=1}^m\left(u_i-\frac1m\right)^2.
\end{align*}
Hence $G(\mb{u},D)$ is bi-affine in $\mb{u},D$ and can be expressed as:
\begin{align}\label{al:G_linear_LU}
G(\mb{u},D)=DL(\mb{u})+R(\mb{u}), \mbox{  where  }
L(\mb{u})=\frac1{2m^2}\sum_{i=1}^m\left(u_i-\frac1m\right)^2  
\end{align}

\noindent since $R$ is independent of $D$ and $D$ is independent of $\mb{u}$. The dual feasibility conditions in \eqref{eq:Q_F_feasibility} can now be expressed equivalently as
\begin{align}\label{al:D>=D_star}
G(\mb{u},D)=DL(\mb{u})+R(\mb{u})\ge0
\quad \forall\,\mb{u}\in[0,1]^m
\qquad\Longleftrightarrow\qquad
D\ge D^\star=\sup_{\mb{u}\in[0,1]^m}
\left(-\frac{R(\mb{u})}{L(\mb{u})}\right)
\end{align}
where the existence of $D^\star <\infty$ is sufficient to ensure feasibility since the dual variable $D$ is unrestricted in \eqref{eq:reduced_dual_scaled_asymptotic}. The conditions imposed in \eqref{al:second_order_conditions_Q_F} imply that $G(\mb{u},D)$ must satisfy:
\begin{align}\label{al:G_stationary_conditions}
G(\mb{u}^\star,D)=0,\qquad \nabla_u G(\mb{u}^\star,D)=0,\qquad
\frac{\partial^2G(\mb{u}^\star,D)}{\partial u_i\partial u_j}=0,
\qquad
i\neq j
\end{align}
From \eqref{al:G_linear_LU}, we have $
L(\mb{u}^\star)=0,\; \nabla L(\mb{u}^\star)=0,\; \dfrac{\partial^2L}{\partial u_i\partial u_j}(\mb{u}^\star)=0,
\;
i\neq j$, which along with \eqref{al:G_stationary_conditions} implies
\begin{align}
\label{al:R_conditions_1}
R(\mb{u}^\star)=0,\qquad \nabla R(\mb{u}^\star)=0, \qquad \dfrac{\partial^2R(\mb{u}^\star)}{\partial u_i\partial u_j}=0,
\;
i\neq j.
\end{align}
For the diagonal second derivatives, $
\dfrac{\partial^2L}{\partial u_i^2}=\dfrac1{m^2}
$ and hence for each $i \in M$, we have:
\begin{equation}\label{eq:G_pure_derivatives1}
    \begin{array}{lll}
\dfrac{\partial^2G(\mb{u},D)}{\partial u_i^2}=
D\dfrac{\partial^2L(\mb{u})}{\partial u_i^2}+\dfrac{\partial^2R(\mb{u})}{\partial u_i^2}
=\dfrac{D}{m^2}+\dfrac{\partial^2R(\mb{u})}{\partial u_i^2}.
\end{array}
\end{equation}
Further, from \eqref{eq:Q_F_feasibility}, we have $
\dfrac{\partial^2Q}{\partial u_i^2}=\dfrac{D}{m^2}$ and $\dfrac{\partial^2F}{\partial u_i^2}=0$ leading to
\begin{equation}
\label{eq:G_pure_derivatives2}
    \begin{array}{lll}
\dfrac{\partial^2G(\mb{u},D)}{\partial u_i^2}=
\dfrac{\partial^2Q}{\partial u_i^2}-\dfrac{\partial^2F}{\partial u_i^2}
=\dfrac{D}{m^2}.
\end{array}
\end{equation}
From \eqref{eq:G_pure_derivatives1} and \eqref{eq:G_pure_derivatives2}, we have $\dfrac{\partial^2R(\mb{u})}{\partial u_i^2}=0$, which together with \eqref{al:R_conditions_1} implies:
\begin{align}
\label{al:R_conditions_2}
R(\mb{u}^\star)=0,\qquad \nabla R(\mb{u}^\star)=0,\qquad \nabla^2R(\mb{u}^\star)=0.
\end{align}
Since $L(\mb{u})$ and $R(\mb{u})$ both vanish at $\mb{u}=\mb{u}^\star$ and $L(\mb{u})>0$ otherwise, we need to show that the ratio $-R(\mb{u})/L(\mb{u})$ in \eqref{al:D>=D_star}, although not defined at  $\mb{u}=\mb{u}^\star$, remains bounded both near and away from $\mb{u}^\star$. In particular, it suffices to establish the following two properties of the absolute ratio:
\begin{enumerate}
\item
\[
\lim_{\mb{u}\to\mb{u}^\star}
\left|
\frac{R(\mb{u})}{L(\mb{u})}
\right|<\infty,
\]
\item
\[
\sup_{\mb{u}\in K_{\epsilon}}
\left|
\frac{R(\mb{u})}{L(\mb{u})}
\right|<\infty,
\qquad \mbox{where }
\;K_\epsilon:=\left\{\mb{u}\in[0,1]^m:\|\mb{u}-\mb{u}^\star\|\ge\epsilon\right\},
\; \mbox{for any }\epsilon>0.
\]
\end{enumerate}
The first property follows from \eqref{al:R_conditions_2}
since it implies that the Taylor's expansion of $R$ about $\mb{u}=\mb{u}^\star$
satisfies
\[
R(\mb{u}^\star+\mb{v})=O(\|\mb{v}\|^3).
\] where $\mb{v}=\mb{u}-\mb{u}^\star$ denotes the perturbation around $\mb{u}^\star$. From \eqref{al:G_linear_LU}, we have $
L(\mb{u}^\star+\mb{v})=\Theta(\|\mb{v}\|^2)
$
and thus
\[
\dfrac{R(\mb{u}^\star+\mb{v})}{L(\mb{u}^\star+\mb{v})}
=O(\|\mb{v}\|)
\longrightarrow 0
\qquad\text{as }\mb{v}\to\mb{0}.
\]
To establish the second property, we note that
\[
L(\mb{u})
=
\frac{1}{2m^2}\|\mb{u}-\mb{u}^\star\|^2
\ge
\frac{\epsilon^2}{2m^2},
\qquad \forall\,\mb{u}\in K_\epsilon.
\]
Moreover, $R(\mb{u})=G(\mb{u},D)-D L(\mb{u})$ is a polynomial in $\mb{u}$ and hence is continuous on $[0,1]^m$. By the Extreme Value Theorem, there exists a finite constant $M$ such that
$
|R(\mb{u})|\le M,\;\forall\,\mb{u}\in[0,1]^m.
$
Therefore, for all $\mb{u}\in K_\epsilon$,
\[
\left|
\frac{R(\mb{u})}{L(\mb{u})}
\right|
\le
\frac{2m^2M}{\epsilon^2}<\infty.
\]
Combining the two properties, we obtain
\begin{align}\label{al:D_star_finite}
D^\star=\sup_{\mb{u}\in[0,1]^m\setminus\{\mb{u}^\star\}}
\left(-\dfrac{R(\mb{u})}{L(\mb{u})}\right)<\infty.
\end{align}

Substituting the expressions for $A,B,E$ from \eqref{eq:asymptotic_dual_feasible_solution} into the objective yields:
\begin{equation}\label{eq:f_++=zenclusen}
    \begin{array}{ll}
f^{++}_{\mathrm{SARD}}(\mb{x}^{\mathrm{id}})
& \le A
+\dfrac{B}{m}
+\dfrac{D}{2m^3}
+\dfrac{1}{2m^2}\left( 1-\dfrac1m \right)E\\[16pt]
&=1-
\left(
1-\dfrac1m
\right)^m
    \end{array}
\end{equation}
where the free parameter \(D\) cancels identically. For the $m$-partition instance with identity marginals $\mb{x}^{\mathrm{id}}$, the feasible solution constructed above gives
\[
\frac{f^+(\mb{x}^{\mathrm{id}})}{f^{++}(\mb{x}^{\mathrm{id}})}
\ge
\frac{1}{1-\left(1-\frac1m\right)^m}
\longrightarrow
\frac{e}{e-1}
\qquad\text{as }m\to\infty.
\]
Since $f^+(\mb{x}^{\mathrm{id}})=1$, it consequently follows that
$
f^{++}(\mb{x}^{\mathrm{id}})
\longrightarrow
1-\frac1e
$
in the asymptotic regime $
n\to\infty,\;m(n)\to\infty,\; m(n)=o(n)
$ and the proof is completed.
\end{proof}
\subsection{Structure of the Asymptotic worst case Pairwise Independent Distribution}\label{sec:boros_prekopa_asymptotic}
The asymptotic dual construction in Section \eqref{subsec:pairwise_scaled_asymptotic_dual} was sufficient to establish
Theorem~\ref{thm:main_result_asymptotic}. However, it does not reveal the structure of the
pairwise independent distribution attaining the asymptotic worst case. The following result, based on the construction of Boros and Pr\'ekopa in~\cite{boros_prekopa89}, shows that the extremal distribution that attains the limit
\[
f^{++}(\mb{x}^{\mathrm{id}})
\longrightarrow
1-\frac{1}{e}
\]
is asymptotically supported on subsets of two adjacent cardinalities, both of the form $n/m+O(1)$. Moreover, this distribution induces an expected occupancy of $1/m$ in each partition block $A_r,\;r \in M$, matching the identical marginal probabilities $x_i=1/m$.
\begin{lemma}[Asymptotic worst case distribution for $f^{++}(\mb{x}^{\mathrm{id}})$]\label{lem:pairwise_primal_bp_two_support}

\[
\textrm{Let }\; x=\frac1m,
\qquad
j=\left\lceil (n-1)x\right\rceil
=\left\lceil \frac{n-1}{m}\right\rceil.
\]
Then, under the asymptotic regime 
$
n\to\infty,\; m(n)=o(n)\to\infty
$, there exists a pairwise independent distribution \(\mb{p}_n^\star\) supported on subsets of size \(j-1\) and \(j\), \emph{i.e.},
\[
\mb{p}_n^\star(S)=
\begin{cases}
\alpha_n, & |S|=j-1,\\
1-\alpha_n, & |S|=j,\\
0, & \text{otherwise},
\end{cases}
\]
\vskip 10 pt
where \(\alpha_n=\dfrac{\left(1-\frac1m\right)\left(j-\frac{(n-1)}{m}\right)}{\binom{n-1}{j-1}}\in[0,1]\), such that
\[
f^{++}(\mb{x}^{\mathrm{id}})=\mathbb E_{\mb{p}_n^\star}[f(S)/|\mathcal H_m|]
=
1-\left(1-\frac1m\right)^m+o(1).
\]
\end{lemma}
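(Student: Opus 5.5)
The plan is to work entirely within exchangeable distributions, where pairwise independence reduces to two scalar moment conditions. Let $\mb{p}$ assign total mass $q_k$ to cardinality level $k$, spread uniformly over the $\binom{n}{k}$ subsets of size $k$. Then $\sum_{S\ni i}p_S=\mathbb{E}[|S|]/n$ and $\sum_{S\supseteq\{i,j\}}p_S=\mathbb{E}[|S|(|S|-1)]/(n(n-1))$ for every $i$ and every pair $i<j$. So $\mb{p}\in$ the feasible set of \eqref{eq:pairwise_concave_closure} at $\mb{x}^{\mathrm{id}}$ if and only if
\begin{equation*}
\sum_k q_k=1,\qquad \sum_k k\,q_k=\frac{n}{m},\qquad \sum_k k(k-1)\,q_k=\frac{n(n-1)}{m^2}.
\end{equation*}

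\textbf{Step 1: feasibility of $\mb{p}_n^\star$.} Following \cite{boros_prekopa89}, I would substitute the two-level support $\{j-1,j\}$ with $j=\lceil (n-1)/m\rceil$ and check that the stated $\alpha_n$ satisfies these conditions. The quantity $j-(n-1)/m\in[0,1)$ is exactly the rounding gap between the level $j$ and the continuous optimum $(n-1)x$. The factor $\binom{n-1}{j-1}$ comes from the Boros--Pr\'ekopa binomial-moment normalisation, in which the conditions are written in terms of $S_1=\mathbb{E}\binom{|S|}{1}$ and $S_2=\mathbb{E}\binom{|S|}{2}$. I would also verify $\alpha_n\in[0,1]$ directly from $0\le j-(n-1)/m<1$ and $\binom{n-1}{j-1}\ge1$.

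This verification is the step I expect to be the main obstacle. A two-point law has only one free weight but must match two moments, so both equations can hold only because $j$ is tied to $(n-1)/m$. I would first reduce the conditions to the mean identity together with the identity $\mathbb{E}[(|S|-(j-1))(|S|-j)]=0$, which holds automatically on the support. Then I would check that the resulting linear condition on $\alpha_n$ is consistent with the stated formula. If the match holds only up to lower-order terms, I would fall back on the statement's asymptotic wording. That means using the fact that the moment equations hold up to $O(1/\binom{n-1}{j-1})=o(1)$, and absorbing the discrepancy into the $o(1)$ term, since $F\in[0,1]$ is bounded.

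\textbf{Step 2: evaluating the expectation.} For $|S|=k$ uniform, the block counts $(a_1,\dots,a_m)$ are multivariate hypergeometric, with $\mathbb{E}[a_r]=k|A_r|/n$ and variance at most $k|A_r|/n$. Hence each normalized occupancy satisfies $u_r(S)=a_r/|A_r|=k/n+O_P\bigl((k|A_r|/n)^{-1/2}\bigr)$, where the relative fluctuation is $O(\sqrt{m/n})\to0$ because $m=o(n)$ and $k\approx n/m\to\infty$.

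The product $\prod_r(1-u_r)$ has expectation that factors to leading order. Under sampling without replacement, $\mathbb{E}\prod_r(1-u_r)$ equals the probability that a fixed feature $H$ is uncovered, which is $\binom{n-m}{k}/\binom{n}{k}$. This is the key simplification: it is exact and requires no concentration argument. For $k=j$ or $j-1$, it gives $\prod_{t=0}^{m-1}\bigl(1-k/(n-t)\bigr)=(1-1/m)^m\,(1+O(m^2/n))$. Since $k=n/m+O(1)$ and $\log$-expansion is applied term by term, the error is $o(1)$ provided $m^2=o(n)$. Otherwise I would bound the error more carefully by $\sum_t|\log(1-k/(n-t))-\log(1-1/m)|=O(m\cdot(1/n+m/n\cdot 1/m))$, which is still $o(1)$ after multiplying by $(1-1/m)^m\le1$.

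Averaging over the two levels with weights $\alpha_n$ and $1-\alpha_n$ then gives $\mathbb{E}_{\mb{p}_n^\star}[f(S)/|\mathcal H_m|]=1-(1-1/m)^m+o(1)$.

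\textbf{Step 3: identifying $f^{++}(\mb{x}^{\mathrm{id}})$.} Feasibility from Step 1 yields $f^{++}(\mb{x}^{\mathrm{id}})\ge\mathbb{E}_{\mb{p}_n^\star}[F]=1-(1-1/m)^m+o(1)$. The matching upper bound $f^{++}(\mb{x}^{\mathrm{id}})\le 1-(1-1/m)^m+o(1)$ is exactly \eqref{eq:f_++=zenclusen}, from the proof of Theorem~\ref{thm:main_result_asymptotic} via Lemmas~\ref{lem:reduced_dual} and~\ref{lem:reduced_dual_scaled_asymptotic}. Combining the two bounds gives the claimed equality and shows that $\mb{p}_n^\star$ is asymptotically optimal.
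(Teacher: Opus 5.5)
Step~1 cannot succeed, and the fallback does not repair it. Pairwise independence by itself forces $\mathrm{Var}(|S|)=\sum_{i\in N}x_i(1-x_i)=\frac nm\bigl(1-\frac1m\bigr)$, because all pairwise covariances vanish. This variance tends to infinity since $m=o(n)$. Any law supported on the two adjacent levels $\{j-1,j\}$, however, has $\mathrm{Var}(|S|)\le\frac14$. So for large $n$, no choice of $\alpha_n$, exchangeable or not, makes a two-level law pairwise independent. The two-level support in the statement is only an asymptotic description.

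Concretely, the second factorial moment falls short of $n(n-1)/m^2$ by $\frac nm(1-\frac1m)-v$ with $v\le\frac14$, even where the mean can be matched. This shortfall is unbounded, not $O\bigl(1/\binom{n-1}{j-1}\bigr)$. The mean itself cannot be matched when $(n-1)/m$ is an integer, since then $n/m=j+\frac1m>j$. Even a genuinely small violation would not help. $f^{++}$ is a maximum over the exactly feasible set, so an infeasible $\mb{p}$ gives no lower bound on $f^{++}(\mb{x}^{\mathrm{id}})$. Without an explicit repair to a feasible point, the lower-bound half of your Step~3 has no support.

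The missing idea is exactly that repair, and it is what the paper uses. The Boros--Pr\'ekopa law \eqref{eq:pn_star} is exactly pairwise independent: it is Theorem~4.1 of the cited SIDMA paper, and you can also check your three moment equations directly. It carries a third atom at $S=N$ with mass $\frac{n(n-1)x^2+(j-1)(j-2nx)}{(n-j)^2+(n-j)}=O(1/n)$, because the quadratic terms in the numerator cancel. Placed at distance of order $n$ from the mean, this small atom supplies the $\Theta(n/m)$ variance that the two adjacent levels cannot.

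With this law, everything else in your plan works:
\begin{itemize}
\item feasibility is exact;
\item the atom at $N$ contributes at most $O(1/n)$ to $\mathbb{E}[F]$, since $F\le 1$;
\item on levels $j-1$ and $j$, your Step~2 is the paper's argument: the exact formula $\binom{n-m}{k}/\binom{n}{k}$ plus a log-expansion with $O(m/n)$ error (your first $O(m^2/n)$ estimate was loose, but your refined bound is right);
\item your Step~3 sandwich against the dual upper bound then goes through.
\end{itemize}
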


\begin{proof}[Proof of Lemma~\ref{lem:pairwise_primal_bp_two_support}]\label{proof:lem:pairwise_primal_bp_two_support}
The worst case distribution $\mb{p}_n^\star$ is derived from an asymptotic reduction of the optimal distribution of an aggregated linear program with given first and second moments in \cite{boros_prekopa89}. This distribution was shown to be feasible for pairwise independent identical random elements in \cite{ramachandra_natarajan_SIDMA} (see Theorem $4.1$ therein). In other words,
\begin{equation}
\label{eq:pn_star}
\mb{p}_n^\star(S)=
\begin{cases}
\dfrac{(1-x)\bigl(j-(n-1)x\bigr)}{\binom{n-1}{j-1}},
& |S|=j-1,\\[2.5ex]
\dfrac{(1-x)\bigl(1+(n-1)x-j\bigr)}{\binom{n-1}{j}},
& |S|=j,\\[2.5ex]
\dfrac{n(n-1)x^2+(j-1)(j-2nx)}{(n-j)^2+(n-j)},
& |S|=n,\\[2.5ex]
0, & \text{otherwise}.
\end{cases}
\end{equation}
satisfies pairwise independence with identical marginals $x=1/m$ when
$j=\left\lceil (n-1)/m\right\rceil$.

For a fixed feature $H\in\mathcal H_m$ with $|H|=m$, the distribution in \eqref{eq:pn_star} is symmetric within each cardinality layer. Hence, conditional on $|S|=k$, it is uniform over all $k$-subsets of $N$, and
\begin{align}
\label{al:null_intersection}
\Pr\nolimits_{\mb{p}_n^\star}(H\cap S=\emptyset\mid |S|=k)
=
\frac{\binom{n-m}{k}}{\binom{n}{k}}
=
\prod_{r=0}^{m-1}\left(1-\frac{k}{n-r}\right).
\end{align}
Since
\[
\frac{n-1}{m}\le j<\frac{n-1}{m}+1,
\]
we have
\[
j=\frac{n}{m}+O(1),\qquad
\frac{j}{n}=\frac1m+O\!\left(\frac1n\right),\qquad
\frac{j-1}{n}=\frac1m+O\!\left(\frac1n\right).
\]
Thus, for $k\in\{j-1,j\}$ and uniformly for $0\le r\le m-1$,
\[
\frac{k}{n-r}
=
\frac1m+O\!\left(\frac1n\right).
\]
Therefore,
\[
\begin{aligned}
\log\left(
\prod_{r=0}^{m-1}\left(1-\frac{k}{n-r}\right)
\right)
&=
\sum_{r=0}^{m-1}
\log\left(1-\frac1m+O\!\left(\frac1n\right)\right)\\
&=
m\log\left(1-\frac1m\right)
+O\!\left(\frac{m}{n}\right)\\
&=
m\log\left(1-\frac1m\right)+o(1),
\end{aligned}
\]
where the second equality follows from a first-order Taylor expansion of $\log$ around $1-\frac1m$ and the third equality follows from the asymptotic regime $
n\to\infty,\; m(n)=o(n)
$.
Exponentiating,
\[
\begin{aligned}
\prod_{r=0}^{m-1}\left(1-\frac{k}{n-r}\right)
&=
\left(1-\frac1m\right)^m e^{o(1)}
=
\left(1-\frac1m\right)^m\bigl(1+o(1)\bigr)\\
&=
\left(1-\frac1m\right)^m+o(1),
\end{aligned}
\]
since $\left(1-\frac1m\right)^m \to 1/e$. Hence,
\[
\Pr\nolimits_{\mb{p}_n^\star}(H\cap S=\emptyset\mid |S|=k)
=
\left(1-\frac1m\right)^m+o(1),
\qquad k\in\{j-1,j\}.
\]

We next show that the mass on $|S|=n$ vanishes. Using $x=1/m$ and
$j=(n-1)x+O(1)$, we can write $j=(n-1)x+\delta$ with $0\le\delta<1$. Then
\[
\begin{aligned}
n(n-1)x^2+(j-1)(j-2nx)
&=
n(n-1)x^2+\bigl((n-1)x+\delta-1\bigr)\bigl(\delta-(n+1)x\bigr)\\
&=(n-1)x^2+(n+1-2\delta)x+\delta(\delta-1)
=O(n).
\end{aligned}
\]
Thus, the quadratic terms cancel. Moreover, since $m\ge2$,
$j=\left\lceil(n-1)/m\right\rceil\le n/2+1$, so $n-j=\Theta(n)$ and
\[
(n-j)^2+(n-j)=\Theta(n^2).
\]
Therefore,
\[
\Pr\nolimits_{\mb{p}_n^\star}(|S|=n)
=
\frac{O(n)}{\Theta(n^2)}
=
O\!\left(\frac1n\right)
=o(1).
\]

Consequently, asymptotically all probability mass is concentrated on subsets
of sizes $j-1$ and $j$. Hence, for any fixed feature $H$,
\[
\Pr\nolimits_{\mb{p}_n^\star}(H\cap S=\emptyset)
=
\left(1-\frac1m\right)^m+o(1),
\]
and therefore
\[
\Pr\nolimits_{\mb{p}_n^\star}(H\cap S\neq\emptyset)
=
1-\left(1-\frac1m\right)^m+o(1).
\]
Since every feature in $\mathcal H_m$ is statistically identical, the expected
contribution of each feature to the objective is
\[
1-\left(1-\frac1m\right)^m+o(1).
\]
By linearity of expectation, summing over
$|\mathcal H_m|=\ell^{m-1}\ell_{m}$ features yields
\[
\mathbb E_{\mb{p}_n^\star}[f(S)]
=
\ell^{m-1}\ell_{m}\left(1-\left(1-\frac1m\right)^m\right)
+o(\ell^{m-1}\ell_{m}),
\]
and hence
$
\mathbb E_{\mb{p}_n^\star}[f(S)/|\mathcal H_m|]
=
1-\left(1-\frac1m\right)^m+o(1),
$
as claimed.
\end{proof}
 We next show that the asymptotically extremal pairwise independent distribution in Lemma \ref{lem:pairwise_primal_bp_two_support} induces an expected occupancy of $1/m$ in each partition block, matching the identity marginals $x_i=1/m$.
\begin{corollary}[Asymptotic occupancy of the partition blocks]\label{cor:asymptotic_occupancy_partition_blocks}
Under the asymptotic regime
\[
n\to\infty,\qquad m=m(n)\to\infty,\qquad m(n)=o(n),
\]
the distribution $\mb{p}_n^\star$ in \eqref{eq:pn_star} induces an asymptotic occupancy
of $1/m$ in each partition block $A_r$, $r\in M$, matching the identical marginal probabilities $x_i=1/m$, \emph{i.e.},
\[
\mathbb{E}_{\mb{p}_n^\star}\!\left[u_r(S)\mid |S|=k\right]
\longrightarrow \frac1m,
\qquad k\in\{j-1,j\},
\]
where $u_r(S)=\dfrac{|S\cap A_r|}{|A_r|}$ denotes the normalized occupancy of block $A_r$ defined earlier.
\end{corollary}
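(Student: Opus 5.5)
Conditional on $|S|=k$, the distribution $\mb{p}_n^\star$ in \eqref{eq:pn_star} is uniform over all $k$-subsets of $N$. This holds because each cardinality layer receives a total mass that is spread equally over its subsets, as already used in \eqref{al:null_intersection}. The plan is therefore to compute $\mathbb{E}[|S\cap A_r|\mid |S|=k]$ exactly by linearity of expectation, normalize by $|A_r|$, and pass to the limit using the asymptotic expansions of $j$ from the proof of Lemma~\ref{lem:pairwise_primal_bp_two_support}.

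\textbf{Step 1 (exact conditional occupancy).} Fix $r\in M$ and $k\in\{j-1,j\}$. Write $|S\cap A_r|=\sum_{i\in A_r}\mathbbm{1}_{\{i\in S\}}$. Under the uniform law on $k$-subsets, each element $i$ lies in $S$ with probability $\binom{n-1}{k-1}/\binom{n}{k}=k/n$. Hence
\[
\mathbb{E}_{\mb{p}_n^\star}\!\left[|S\cap A_r|\mid |S|=k\right]=\frac{k\,|A_r|}{n},
\qquad
\mathbb{E}_{\mb{p}_n^\star}\!\left[u_r(S)\mid |S|=k\right]=\frac{k}{n}.
\]
The block size $|A_r|$ cancels exactly, so the computation is the same for the $m-1$ blocks of size $\ell$ and for the last block of size $\ell_m$. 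No separate treatment of the irregular block $A_m$ is needed, which is the reason for normalizing by $|A_r|$ rather than by $\ell$.

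\textbf{Step 2 (asymptotics).} From $\frac{n-1}{m}\le j<\frac{n-1}{m}+1$ we get, for $k\in\{j-1,j\}$,
\[
\frac{k}{n}=\frac1m+O\!\left(\frac1n\right),
\]
exactly as derived in the proof of Lemma~\ref{lem:pairwise_primal_bp_two_support}. Since $n\to\infty$, we have $k/n-1/m\to0$, and this is the claimed convergence. Because $1/m$ itself tends to zero, the statement is only meaningful in the sharper relative form $m\cdot k/n\to1$. This also holds, since $m\cdot O(1/n)=O(m/n)=o(1)$ by the sublinear growth $m(n)=o(n)$. I will state the result in this relative form, which is where the hypothesis $m=o(n)$ is genuinely used.

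\textbf{Step 3 (remark on the unconditional occupancy).} Optionally, I will also note the unconditional expectation $\mathbb{E}[u_r(S)]$. By the same symmetry argument applied to the full distribution, it equals $\mathbb{E}[|S|]/n$, and $\mathbb{E}[|S|]=\sum_i x_i=n/m$ because $\mathbb{P}(i\in S)=x_i$. This gives exactly $\mathbb{E}[u_r(S)]=1/m$, and the small $O(1/n)$ mass on $|S|=n$ does not affect the identity. The corollary itself, however, concerns only the two conditional layers.

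\textbf{Main obstacle.} The main difficulty is justifying the layer-uniformity of $\mb{p}_n^\star$: the mass assigned to a layer must be split equally among all subsets of that size, not merely add up to the stated total. I will take this from the construction cited from \cite{boros_prekopa89} and \cite{ramachandra_natarajan_SIDMA}, as the proof of Lemma~\ref{lem:pairwise_primal_bp_two_support} already does. Once this is granted, the rest is the elementary linearity computation above.
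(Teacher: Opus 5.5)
Your proposal is correct and follows the paper's proof essentially verbatim: layer-uniformity of $\mb{p}_n^\star$, then $\Pr(i\in S\mid |S|=k)=\binom{n-1}{k-1}/\binom{n}{k}=k/n$, then linearity over $A_r$ so that $|A_r|$ cancels, then $k/n=1/m+O(1/n)$. Two small additions are worthwhile: your relative form $m\,k/n=1+O(m/n)\to1$ is a genuine sharpening that the paper omits, since the additive statement $k/n\to 1/m$ holds trivially when both sides tend to $0$; and the ``main obstacle'' you flag is not really one, because \eqref{eq:pn_star} already assigns each $S$ a mass that depends only on $|S|$.
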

\begin{proof}
    The proof is relegated to Appendix~\ref{proof_cor:asymptotic_occupancy_partition_blocks}. 
\end{proof}

\section{Conclusion}\label{sec:conclusion}

This paper resolves two open questions concerning the worst case pairwise
independent correlation gap for monotone submodular functions.
First, we establish that the $4/3$ bound indeed holds for $n=4$, thereby
confirming the conjecture of \cite{ramachandra_special_cases_OR_letters}
for the only dimension left unresolved after the counterexample for
$n\geq5$ in \cite{ramachandra_natarajan_counter_2026}. Using an AI-assisted proof combining theoretical analysis and computational verification, we develop a primal based approach that combines a structural characterization of
optimal numerator vertices with permutation symmetry, cone certificate
systems, Bernstein polynomial representations, recursive simplex
subdivision, and computational verification. By treating both the
numerator and denominator through their primal
formulations, our approach allows a common
cone certificate system to be constructed. In particular, $2,745$ Bernstein coefficient systems were computationally verified to be  feasible, yielding $4,476$ successful leaf certificates
and establishing the $4/3$ bound universally for $n=4$, and the bound is
tight.

Second, we complement the counterexample in
\cite{ramachandra_natarajan_counter_2026} by showing that although the
$4/3$ bound fails for $n\geq5$, the worst case pairwise independent
correlation gap fails to improve upon the classical
$e/(e-1)$ bound established in \cite{Agrawal2012} under mutual independence.
We do so by partitioning the ground set into $m$ blocks and defining
features that select one element from each block, together with a
monotone submodular union coverage function that counts the features
covered by a given set $S\subseteq N$. The key condition is that the number of
blocks grows sublinearly with the ground set size, so that both the number
of blocks and the size of each block tend to infinity. The proof technique uses a scaled asymptotic reduced dual (SARD) of the
pairwise independent extension and constructs a feasible solution that
ensures that the pairwise independent extension is asymptotically no
larger than $1-1/e$. 
On the primal side, we characterize the corresponding asymptotically extremal pairwise independent distribution, showing that it is supported on subsets of two adjacent
cardinalities, both of order $n/m$, and induces an expected occupancy of
$1/m$ in each partition block, matching the identity marginals
$x_i=1/m$. In particular, the key insight from this result is that, despite constraining
only pairwise marginals and allowing arbitrary higher order dependencies,
pairwise independence can be \emph{as restrictive} as mutual independence in the
worst case. 
The asymptotic result immediately extends to $t$-wise independent random elements
($t\ge2$), since $t$-wise independence implies pairwise independence. 

These two results together add to the theory of the correlation gap by
showing that the pairwise independent correlation gap exhibits a dimension dependent worst case
bound, with $n=4$ being the largest dimension for which the $4/3$ bound
holds, while the worst case gap asymptotically attains the $e/(e-1)$ bound. For dimensions $5\le n<\infty$, the precise worst
case pairwise independent correlation gap remains unknown and is bounded
below by the current best known lower bound $640/479>4/3$ and above by $e/(e-1)$. Another natural direction for future work is to consider positive correlations,
which, being less restrictive, allow a larger set of feasible distributions
thus increasing the denominator, suggesting that the $4/3$ bound may
continue to hold under positive correlations. 
\paragraph{Acknowledgment:} The author would like to thank Gustav Malmqvist\footnote{Gustav Malmqvist, AI enthusiast. LinkedIn: \url{https://se.linkedin.com/in/gmalmqvist}.}, an AI enthusiast, who, inspired by  our explicit identification of the $n=4$ case as an open problem in \cite{ramachandra_natarajan_counter_2026}, attempted to solve it using a human-AI workflow. Using GPT 5.6 and Fable 5, he developed a high level proof idea that ultimately led to the approach pursued in this paper.

\newpage
\begin{appendices}
\section{Proofs of Results}\label{appendix:proof_of_results}

\begin{appproof}[Proof of Lemma~\ref{lem:vertex_support_characterization}.]\label{proof:lem_vertex_support_characterization}

First suppose that $\mb{u}$ is a vertex with $J=\operatorname{supp}(\mb{u})$. Suppose, to the contrary that the columns of $A$ (in $\mathbb{R}^5$) corresponding to the subsets in $J$ are 
linearly dependent. Then, there exists a vector $\mb{d} \in \mathbb{R}^{16}$ with $\;\mb{d} \neq \mb{0}$ and $\operatorname{supp}(\mb{d})=J$, such that
$
A\mb{d}=\mb{0}.
$
Since $u_j>0$ for every $j\in J$, choose
\[
0<\epsilon<
\min_{j\in J:\,d_j\neq0}\frac{u_j}{|d_j|}.
\]
Then $
\mb{u}+\epsilon\mb{d}\ge0,\;
\mb{u}-\epsilon\mb{d}\ge0,
$
and
\[
A(\mb{u}\pm\epsilon\mb{d})
=
A\mb{u}
=
\begin{pmatrix}
1\\
\mb{x}
\end{pmatrix}.
\]
Thus both perturbed points belong to $\mathcal{F}_{\mathrm{U}}(\mb{x})$,
with
\[
\mb{u}
=
\frac12(\mb{u}+\epsilon\mb{d})
+
\frac12(\mb{u}-\epsilon\mb{d}),
\]
contradicting the extremality of $\mb{u}$. Hence the corresponding columns of $A$ (in $\mathbb{R}^5$) are 
linearly independent and hence
$|J|\le5$. Thus every vertex of $\mb{u} \in \mathcal{F}_{\mathrm U}(\mb{x})$ is supported on at most
five subsets of $N_4$.

Conversely, suppose that the columns of $A$ corresponding to $J$ are
linearly independent. Then $|J| \le 5$. If $\mb{u}$ were not a vertex, there would exist $\mb{u}^+,\mb{u}^-\in\mathcal{F}_{\mathrm{U}}(\mb{x})$ such that
\[
\mb{u}
=
\frac12(\mb{u}^++\mb{u}^-),\quad \mb{u}^+ \neq \mb{u}^-
\]
Since $\mb{u}$ is zero outside $J$ and $\mb{u}^+,\mb{u}^-\ge0$, both
$\mb{u}^+$ and $\mb{u}^-$ must also be zero outside $J$. Hence there exists a vector $\mb{d}$ such that:
\[
\begin{aligned}
\mb{d}
&=
\mb{u}^+-\mb{u}^-
\neq \mb{0},
\quad
\operatorname{supp}(\mb{d}) \subseteq J,\\
\mbox{and }\quad A\mb{d}
&=
A\mb{u}^+-A\mb{u}^-
=
\mb{0}.
\end{aligned}\]
which contradicts the linear independence of the columns indexed by
$J$. Therefore $\mb{u}$ is a vertex. 
\end{appproof}

\begin{appproof}[Proof of Corollary \ref{cor:nonsingular_simplices_cover}\label{proof:cor_nonsingular_simplices_cover}]
The covering property follows immediately from
Corollary~\ref{cor:common_convex_coefficients}, since every
$\mb{x}\in[0,1]^4$ belongs to the incidence simplex corresponding to
the nonsingular five-subset basis induced by an optimal numerator
vertex. To see that the covering is overlapping, consider the two
nonsingular bases
$$B_1=\{\emptyset,\{4\},\{3,4\},\{2,3,4\},\{1,2,3,4\}\},\quad B_2=\{\emptyset,\{1\},\{2\},\{3\},\{4\}\}$$. whose corresponding incidence
simplices are
\[
\Delta_V(B_1)
=
\operatorname{conv}\{\mb{0},\mb{e}_4,\mb{e}_3+\mb{e}_4,
\mb{e}_2+\mb{e}_3+\mb{e}_4,\mb{1}\},
\qquad
\Delta_V(B_2)
=
\operatorname{conv}\{\mb{0},\mb{e}_1,\mb{e}_2,\mb{e}_3,\mb{e}_4\}.
\]
The first simplex is characterized by
$x_1\leq x_2\leq x_3\leq x_4$, while the second is characterized by
$x_1+x_2+x_3+x_4\leq1$. Hence their intersection contains all
$\mb{x}$ satisfying both conditions and is therefore non-empty containing a continuum of points. Thus, the set of nonsingular five-subset simplices cover
$[0,1]^4$ but do not form a partition.
\end{appproof}

\begin{appproof}
[Proof of Lemma \ref{lem:symmetry_permutation_invariance}\label{proof:lem_symmetry_permutation_invariance}]
A permutation $\pi$ of the four ground set elements induces permutations
of the $16$ subsets of $N_4$, the $56$ submodularity constraints, and the
$11$ moment equations. Let
\[
R_\pi\in\mathbb{R}^{16\times16},\qquad
L_\pi\in\mathbb{R}^{56\times56},\qquad
T_\pi\in\mathbb{R}^{11\times11}
\]
denote the corresponding permutation matrices that act on the Bernstein coefficient vectors
according to
\begin{equation}\label{eq:permuted_matrices_acting_Bernstein}
R_\pi\mb{p}^{ab}=\mb{p}^{ab}_{\pi},
\qquad
R_\pi\mb{u}^{{\star}^{ab}}
=\mb{u}^{{\star}^{ab}}_{\pi},
\qquad
R_\pi\mb{e}_{\emptyset}=\mb{e}_{\emptyset},\qquad
L_\pi\mb{\gamma}^{ab}
=\mb{\gamma}^{ab}_{\pi},
\qquad
T_\pi\mb{r}^{ab}
=\mb{r}^{ab}_{\pi}.
\end{equation}
where, $R_\pi\mb{e}_{\emptyset}=\mb{e}_{\emptyset}$ follows from the first row and first column of $R_\pi$ being $(1,0,\ldots,0)$, since the empty set is fixed under every permutation. Thus $R_\pi$ acts on the
$16$-dimensional distribution vectors $\mb{u}^{{\star}^{ab}}$ and
$\mb{p}^{ab}$, $L_\pi$ acts on the $56$-dimensional vector
$\mb{\gamma}^{ab}$, and $T_\pi$ acts on the $11$-dimensional
right-hand side $\mb{r}^{ab}$ in
\eqref{eq:coeff_wise_27_equations_73_variables}. Since relabeling the
ground set elements only relabels the marginal and pairwise moment
equations, the moment matrix satisfies
\begin{equation}\label{eq:permuted_matrices_A_P}
A_{\mathrm P}R_\pi=T_\pi A_{\mathrm P}.
\end{equation}
Likewise, relabeling the ground set elements only relabels the
submodularity constraints. Hence the rows and columns of $G^\top$ are
permuted consistently, giving
\begin{equation}\label{eq:permuted_matrices_G}
R_\pi G^\top=G^\top L_\pi.
\end{equation}
 From \eqref{eq:permuted_matrices_acting_Bernstein}, \eqref{eq:permuted_matrices_A_P}, \eqref{eq:permuted_matrices_G}, the moment equation for the permuted simplex is satisfied since
\[
A_{\mathrm P}\mb{p}^{ab}_{\pi}
=
A_{\mathrm P}(R_\pi\mb{p}^{ab})
=
T_\pi A_{\mathrm P}\mb{p}^{ab}
=
T_\pi\mb{r}^{ab}=
\mb{r}^{ab}_{\pi}
.\]
and the cone certificate equation for the permuted simplex is
satisfied since
\[
\begin{array}{rcl}
\displaystyle
\frac{4}{3}\mb{p}^{ab}_{\pi}
-
G^\top\mb{\gamma}^{ab}_{\pi}
-
c^{ab}\mb{e}_{\emptyset}
&=&
\displaystyle
\frac{4}{3}R_\pi\mb{p}^{ab}
-
G^\top L_\pi\mb{\gamma}^{ab}
-
c^{ab}\mb{e}_{\emptyset}
\\[1ex]
&=&
\displaystyle
\frac{4}{3}R_\pi\mb{p}^{ab}
-
R_\pi G^\top\mb{\gamma}^{ab}
-
c^{ab}R_\pi\mb{e}_{\emptyset}
\\[1ex]
&=&
\displaystyle
R_\pi\left(
\frac{4}{3}\mb{p}^{ab}
-
G^\top\mb{\gamma}^{ab}
-
c^{ab}\mb{e}_{\emptyset}
\right)
\\[1ex]
&=&
R_\pi\mb{u}^{{\star}^{ab}}
=
\mb{u}^{{\star}^{ab}}_{\pi}.
\end{array}
\]
Thus, permutation of a feasible coefficient solution for the
representative simplex yields a feasible coefficient solution for every
simplex in the same orbit.
\end{appproof}

\begin{appproof}[Proof of Corollary~\ref{cor:asymptotic_occupancy_partition_blocks}]\label{proof_cor:asymptotic_occupancy_partition_blocks}
For $k\in\{j-1,j\}$, the distribution in \eqref{eq:pn_star} is uniform within the
$k$-layer. Hence, for any $i\in N$,
\[
\Pr\nolimits_{\mb{p}_n^\star}(i\in S\mid |S|=k)
=
\frac{\binom{n-1}{k-1}}{\binom{n}{k}}
=
\frac{k}{n}
=
\frac1m+O\!\left(\frac1n\right).
\]
For each partition block $A_r$, $r\in M$, we have
$|S\cap A_r|=\sum_{i\in A_r}\mathbbm{1}_{\{i\in S\}}$, and hence, by linearity of expectation,
\[
\begin{aligned}
\mathbb{E}_{\mb{p}_n^\star}\!\left[u_r(S)\mid |S|=k\right]
&=
\frac{1}{|A_r|}\sum_{i\in A_r}
\Pr\nolimits_{\mb{p}_n^\star}(i\in S\mid |S|=k)
=
\frac{k}{n}
\longrightarrow\frac1m
\end{aligned} 
\] under the asymptotic regime considered. Thus, the expected occupancy of each partition block $A_r$, $r\in M$, which is the average probability that an element of $A_r$ is included in $S$, equals the identical marginal probability $\Pr(i\in S)=1/m$.
\end{appproof}

\section{Orbit representatives and subdivision statistics from Section \ref{subsec:permutation_orbits}}
\label{appendix_sec:orbit_representatives_and_subdivision_stats}
\newgeometry{top=0.6cm,bottom=0.8cm,left=1cm,right=1cm}
\begin{table}[htbp]
\centering
\caption{Subdivision statistics for the $183$ permutation orbit representatives.}
\label{tab:orbit_subdivision_statistics}
\scriptsize
\setlength{\tabcolsep}{3pt}
\renewcommand{\arraystretch}{0.7}
\begin{tabular}{c p{0.30\linewidth} c c @{\hspace{0.35cm}} c p{0.30\linewidth} c c}
\hline
Orbit & Representative basis & Leaves & Depth & Orbit & Representative basis & Leaves & Depth \\
\hline
1 & $\{\emptyset, \{1\}, \{2\}, \{3\}, \{4\}\}$ & 32 & 7 & 93 & $\{\{1\}, \{2\}, \{1,3\}, \{1,2,3\}, \{2,3,4\}\}$ & 15 & 6 \\
2 & $\{\emptyset, \{1\}, \{2\}, \{3\}, \{1,4\}\}$ & 33 & 6 & 94 & $\{\{1\}, \{2\}, \{1,3\}, \{1,2,3\}, \{1,2,3,4\}\}$ & 13 & 5 \\
3 & $\{\emptyset, \{1\}, \{2\}, \{3\}, \{1,2,4\}\}$ & 10 & 4 & 95 & $\{\{1\}, \{2\}, \{1,3\}, \{1,4\}, \{3,4\}\}$ & 37 & 8 \\
4 & $\{\emptyset, \{1\}, \{2\}, \{3\}, \{1,2,3,4\}\}$ & 24 & 6 & 96 & $\{\{1\}, \{2\}, \{1,3\}, \{1,4\}, \{1,2,3,4\}\}$ & 25 & 6 \\
5 & $\{\emptyset, \{1\}, \{2\}, \{1,3\}, \{1,4\}\}$ & 29 & 7 & 97 & $\{\{1\}, \{2\}, \{1,3\}, \{2,4\}, \{3,4\}\}$ & 36 & 8 \\
6 & $\{\emptyset, \{1\}, \{2\}, \{1,3\}, \{2,4\}\}$ & 6 & 3 & 98 & $\{\{1\}, \{2\}, \{1,3\}, \{2,4\}, \{1,2,3,4\}\}$ & 22 & 5 \\
7 & $\{\emptyset, \{1\}, \{2\}, \{1,3\}, \{1,2,4\}\}$ & 16 & 5 & 99 & $\{\{1\}, \{2\}, \{1,3\}, \{1,2,4\}, \{3,4\}\}$ & 54 & 8 \\
8 & $\{\emptyset, \{1\}, \{2\}, \{1,3\}, \{3,4\}\}$ & 42 & 8 & 100 & $\{\{1\}, \{2\}, \{1,3\}, \{1,2,4\}, \{1,3,4\}\}$ & 17 & 5 \\
9 & $\{\emptyset, \{1\}, \{2\}, \{1,3\}, \{1,3,4\}\}$ & 5 & 3 & 101 & $\{\{1\}, \{2\}, \{1,3\}, \{1,2,4\}, \{2,3,4\}\}$ & 28 & 6 \\
10 & $\{\emptyset, \{1\}, \{2\}, \{1,3\}, \{2,3,4\}\}$ & 14 & 5 & 102 & $\{\{1\}, \{2\}, \{1,3\}, \{3,4\}, \{1,3,4\}\}$ & 21 & 8 \\
11 & $\{\emptyset, \{1\}, \{2\}, \{1,3\}, \{1,2,3,4\}\}$ & 14 & 5 & 103 & $\{\{1\}, \{2\}, \{1,3\}, \{3,4\}, \{2,3,4\}\}$ & 30 & 8 \\
12 & $\{\emptyset, \{1\}, \{2\}, \{1,2,3\}, \{1,2,4\}\}$ & 27 & 7 & 104 & $\{\{1\}, \{2\}, \{1,3\}, \{3,4\}, \{1,2,3,4\}\}$ & 41 & 7 \\
13 & $\{\emptyset, \{1\}, \{2\}, \{1,2,3\}, \{3,4\}\}$ & 50 & 7 & 105 & $\{\{1\}, \{2\}, \{1,3\}, \{1,3,4\}, \{1,2,3,4\}\}$ & 13 & 5 \\
14 & $\{\emptyset, \{1\}, \{2\}, \{1,2,3\}, \{1,3,4\}\}$ & 16 & 6 & 106 & $\{\{1\}, \{2\}, \{1,3\}, \{2,3,4\}, \{1,2,3,4\}\}$ & 23 & 5 \\
15 & $\{\emptyset, \{1\}, \{2\}, \{1,2,3\}, \{1,2,3,4\}\}$ & 12 & 5 & 107 & $\{\{1\}, \{2\}, \{1,2,3\}, \{1,2,4\}, \{3,4\}\}$ & 75 & 8 \\
16 & $\{\emptyset, \{1\}, \{1,2\}, \{1,3\}, \{1,4\}\}$ & 32 & 7 & 108 & $\{\{1\}, \{2\}, \{1,2,3\}, \{1,2,4\}, \{1,3,4\}\}$ & 31 & 6 \\
17 & $\{\emptyset, \{1\}, \{1,2\}, \{1,3\}, \{2,4\}\}$ & 13 & 5 & 109 & $\{\{1\}, \{2\}, \{1,2,3\}, \{1,2,4\}, \{1,2,3,4\}\}$ & 25 & 6 \\
18 & $\{\emptyset, \{1\}, \{1,2\}, \{1,3\}, \{1,2,4\}\}$ & 12 & 5 & 110 & $\{\{1\}, \{2\}, \{1,2,3\}, \{3,4\}, \{1,3,4\}\}$ & 39 & 7 \\
19 & $\{\emptyset, \{1\}, \{1,2\}, \{1,3\}, \{2,3,4\}\}$ & 28 & 6 & 111 & $\{\{1\}, \{2\}, \{1,2,3\}, \{3,4\}, \{1,2,3,4\}\}$ & 50 & 7 \\
20 & $\{\emptyset, \{1\}, \{1,2\}, \{1,3\}, \{1,2,3,4\}\}$ & 10 & 4 & 112 & $\{\{1\}, \{2\}, \{1,2,3\}, \{1,3,4\}, \{1,2,3,4\}\}$ & 16 & 5 \\
21 & $\{\emptyset, \{1\}, \{1,2\}, \{2,3\}, \{2,4\}\}$ & 55 & 9 & 113 & $\{\{1\}, \{1,2\}, \{1,3\}, \{2,3\}, \{1,4\}\}$ & 12 & 4 \\
22 & $\{\emptyset, \{1\}, \{1,2\}, \{2,3\}, \{1,2,4\}\}$ & 17 & 6 & 114 & $\{\{1\}, \{1,2\}, \{1,3\}, \{2,3\}, \{2,4\}\}$ & 18 & 6 \\
23 & $\{\emptyset, \{1\}, \{1,2\}, \{2,3\}, \{3,4\}\}$ & 31 & 8 & 115 & $\{\{1\}, \{1,2\}, \{1,3\}, \{2,3\}, \{1,2,4\}\}$ & 16 & 5 \\
24 & $\{\emptyset, \{1\}, \{1,2\}, \{2,3\}, \{1,3,4\}\}$ & 31 & 6 & 116 & $\{\{1\}, \{1,2\}, \{1,3\}, \{2,3\}, \{2,3,4\}\}$ & 15 & 5 \\
25 & $\{\emptyset, \{1\}, \{1,2\}, \{2,3\}, \{2,3,4\}\}$ & 18 & 5 & 117 & $\{\{1\}, \{1,2\}, \{1,3\}, \{2,3\}, \{1,2,3,4\}\}$ & 20 & 5 \\
26 & $\{\emptyset, \{1\}, \{1,2\}, \{2,3\}, \{1,2,3,4\}\}$ & 17 & 5 & 118 & $\{\{1\}, \{1,2\}, \{1,3\}, \{1,4\}, \{2,3,4\}\}$ & 24 & 6 \\
27 & $\{\emptyset, \{1\}, \{1,2\}, \{1,2,3\}, \{1,2,4\}\}$ & 12 & 5 & 119 & $\{\{1\}, \{1,2\}, \{1,3\}, \{2,4\}, \{1,2,4\}\}$ & 6 & 3 \\
28 & $\{\emptyset, \{1\}, \{1,2\}, \{1,2,3\}, \{3,4\}\}$ & 13 & 5 & 120 & $\{\{1\}, \{1,2\}, \{1,3\}, \{2,4\}, \{1,3,4\}\}$ & 14 & 5 \\
29 & $\{\emptyset, \{1\}, \{1,2\}, \{1,2,3\}, \{1,3,4\}\}$ & 17 & 6 & 121 & $\{\{1\}, \{1,2\}, \{1,3\}, \{2,4\}, \{1,2,3,4\}\}$ & 14 & 5 \\
30 & $\{\emptyset, \{1\}, \{1,2\}, \{1,2,3\}, \{2,3,4\}\}$ & 13 & 5 & 122 & $\{\{1\}, \{1,2\}, \{1,3\}, \{1,2,4\}, \{2,3,4\}\}$ & 14 & 5 \\
31 & $\{\emptyset, \{1\}, \{1,2\}, \{1,2,3\}, \{1,2,3,4\}\}$ & 4 & 2 & 123 & $\{\{1\}, \{1,2\}, \{1,3\}, \{2,3,4\}, \{1,2,3,4\}\}$ & 12 & 5 \\
32 & $\{\emptyset, \{1\}, \{2,3\}, \{2,4\}, \{3,4\}\}$ & 80 & 8 & 124 & $\{\{1\}, \{1,2\}, \{2,3\}, \{1,2,3\}, \{2,4\}\}$ & 18 & 6 \\
33 & $\{\emptyset, \{1\}, \{2,3\}, \{2,4\}, \{1,3,4\}\}$ & 51 & 7 & 125 & $\{\{1\}, \{1,2\}, \{2,3\}, \{1,2,3\}, \{1,2,4\}\}$ & 13 & 5 \\
34 & $\{\emptyset, \{1\}, \{2,3\}, \{2,4\}, \{2,3,4\}\}$ & 32 & 8 & 126 & $\{\{1\}, \{1,2\}, \{2,3\}, \{1,2,3\}, \{3,4\}\}$ & 14 & 5 \\
35 & $\{\emptyset, \{1\}, \{2,3\}, \{2,4\}, \{1,2,3,4\}\}$ & 40 & 8 & 127 & $\{\{1\}, \{1,2\}, \{2,3\}, \{1,2,3\}, \{1,3,4\}\}$ & 17 & 6 \\
36 & $\{\emptyset, \{1\}, \{2,3\}, \{1,2,4\}, \{1,3,4\}\}$ & 38 & 7 & 128 & $\{\{1\}, \{1,2\}, \{2,3\}, \{1,2,3\}, \{2,3,4\}\}$ & 7 & 3 \\
37 & $\{\emptyset, \{1\}, \{2,3\}, \{1,2,4\}, \{2,3,4\}\}$ & 23 & 5 & 129 & $\{\{1\}, \{1,2\}, \{2,3\}, \{1,2,3\}, \{1,2,3,4\}\}$ & 10 & 4 \\
38 & $\{\emptyset, \{1\}, \{2,3\}, \{1,2,4\}, \{1,2,3,4\}\}$ & 23 & 5 & 130 & $\{\{1\}, \{1,2\}, \{2,3\}, \{2,4\}, \{3,4\}\}$ & 33 & 7 \\
39 & $\{\emptyset, \{1\}, \{1,2,3\}, \{1,2,4\}, \{1,3,4\}\}$ & 38 & 7 & 131 & $\{\{1\}, \{1,2\}, \{2,3\}, \{2,4\}, \{1,3,4\}\}$ & 38 & 7 \\
40 & $\{\emptyset, \{1\}, \{1,2,3\}, \{1,2,4\}, \{2,3,4\}\}$ & 28 & 6 & 132 & $\{\{1\}, \{1,2\}, \{2,3\}, \{2,4\}, \{2,3,4\}\}$ & 20 & 6 \\
41 & $\{\emptyset, \{1\}, \{1,2,3\}, \{1,2,4\}, \{1,2,3,4\}\}$ & 20 & 6 & 133 & $\{\{1\}, \{1,2\}, \{2,3\}, \{2,4\}, \{1,2,3,4\}\}$ & 30 & 6 \\
42 & $\{\emptyset, \{1,2\}, \{1,3\}, \{2,3\}, \{1,4\}\}$ & 48 & 8 & 134 & $\{\{1\}, \{1,2\}, \{2,3\}, \{1,2,4\}, \{1,3,4\}\}$ & 27 & 6 \\
43 & $\{\emptyset, \{1,2\}, \{1,3\}, \{2,3\}, \{1,2,4\}\}$ & 42 & 7 & 135 & $\{\{1\}, \{1,2\}, \{2,3\}, \{1,2,4\}, \{1,2,3,4\}\}$ & 19 & 5 \\
44 & $\{\emptyset, \{1,2\}, \{1,3\}, \{2,3\}, \{1,2,3,4\}\}$ & 20 & 6 & 136 & $\{\{1\}, \{1,2\}, \{2,3\}, \{3,4\}, \{1,3,4\}\}$ & 23 & 5 \\
45 & $\{\emptyset, \{1,2\}, \{1,3\}, \{1,2,3\}, \{1,4\}\}$ & 48 & 9 & 137 & $\{\{1\}, \{1,2\}, \{2,3\}, \{3,4\}, \{1,2,3,4\}\}$ & 26 & 6 \\
46 & $\{\emptyset, \{1,2\}, \{1,3\}, \{1,2,3\}, \{2,4\}\}$ & 20 & 6 & 138 & $\{\{1\}, \{1,2\}, \{2,3\}, \{1,3,4\}, \{2,3,4\}\}$ & 23 & 5 \\
47 & $\{\emptyset, \{1,2\}, \{1,3\}, \{1,2,3\}, \{1,2,4\}\}$ & 18 & 6 & 139 & $\{\{1\}, \{1,2\}, \{2,3\}, \{2,3,4\}, \{1,2,3,4\}\}$ & 13 & 5 \\
48 & $\{\emptyset, \{1,2\}, \{1,3\}, \{1,2,3\}, \{2,3,4\}\}$ & 31 & 6 & 140 & $\{\{1\}, \{1,2\}, \{1,2,3\}, \{1,2,4\}, \{3,4\}\}$ & 24 & 6 \\
49 & $\{\emptyset, \{1,2\}, \{1,3\}, \{1,2,3\}, \{1,2,3,4\}\}$ & 10 & 5 & 141 & $\{\{1\}, \{1,2\}, \{1,2,3\}, \{1,2,4\}, \{2,3,4\}\}$ & 12 & 4 \\
50 & $\{\emptyset, \{1,2\}, \{1,3\}, \{1,4\}, \{2,3,4\}\}$ & 103 & 10 & 142 & $\{\{1\}, \{1,2\}, \{1,2,3\}, \{3,4\}, \{1,3,4\}\}$ & 18 & 5 \\
51 & $\{\emptyset, \{1,2\}, \{1,3\}, \{1,4\}, \{1,2,3,4\}\}$ & 40 & 8 & 143 & $\{\{1\}, \{1,2\}, \{1,2,3\}, \{3,4\}, \{1,2,3,4\}\}$ & 14 & 5 \\
52 & $\{\emptyset, \{1,2\}, \{1,3\}, \{2,4\}, \{1,3,4\}\}$ & 33 & 6 & 144 & $\{\{1\}, \{1,2\}, \{1,2,3\}, \{1,3,4\}, \{2,3,4\}\}$ & 17 & 5 \\
53 & $\{\emptyset, \{1,2\}, \{1,3\}, \{1,2,4\}, \{2,3,4\}\}$ & 43 & 7 & 145 & $\{\{1\}, \{1,2\}, \{1,2,3\}, \{2,3,4\}, \{1,2,3,4\}\}$ & 5 & 3 \\
54 & $\{\emptyset, \{1,2\}, \{1,3\}, \{1,2,4\}, \{1,2,3,4\}\}$ & 15 & 6 & 146 & $\{\{1\}, \{2,3\}, \{1,2,3\}, \{2,4\}, \{3,4\}\}$ & 41 & 7 \\
55 & $\{\emptyset, \{1,2\}, \{1,3\}, \{2,3,4\}, \{1,2,3,4\}\}$ & 30 & 6 & 147 & $\{\{1\}, \{2,3\}, \{1,2,3\}, \{2,4\}, \{1,3,4\}\}$ & 38 & 7 \\
56 & $\{\emptyset, \{1,2\}, \{1,2,3\}, \{1,2,4\}, \{1,3,4\}\}$ & 25 & 6 & 148 & $\{\{1\}, \{2,3\}, \{1,2,3\}, \{2,4\}, \{2,3,4\}\}$ & 15 & 5 \\
57 & $\{\emptyset, \{1,2\}, \{1,2,3\}, \{3,4\}, \{1,3,4\}\}$ & 23 & 5 & 149 & $\{\{1\}, \{2,3\}, \{1,2,3\}, \{2,4\}, \{1,2,3,4\}\}$ & 28 & 6 \\
58 & $\{\emptyset, \{1,2\}, \{1,2,3\}, \{1,3,4\}, \{2,3,4\}\}$ & 36 & 7 & 150 & $\{\{1\}, \{2,3\}, \{1,2,3\}, \{1,2,4\}, \{1,3,4\}\}$ & 36 & 7 \\
59 & $\{\emptyset, \{1,2\}, \{1,2,3\}, \{1,3,4\}, \{1,2,3,4\}\}$ & 14 & 5 & 151 & $\{\{1\}, \{2,3\}, \{1,2,3\}, \{1,2,4\}, \{2,3,4\}\}$ & 18 & 6 \\
60 & $\{\emptyset, \{1,2,3\}, \{1,2,4\}, \{1,3,4\}, \{2,3,4\}\}$ & 38 & 7 & 152 & $\{\{1\}, \{2,3\}, \{1,2,3\}, \{1,2,4\}, \{1,2,3,4\}\}$ & 21 & 6 \\
61 & $\{\emptyset, \{1,2,3\}, \{1,2,4\}, \{1,3,4\}, \{1,2,3,4\}\}$ & 20 & 6 & 153 & $\{\{1\}, \{2,3\}, \{2,4\}, \{3,4\}, \{2,3,4\}\}$ & 25 & 7 \\
62 & $\{\{1\}, \{2\}, \{1,2\}, \{3\}, \{4\}\}$ & 72 & 9 & 154 & $\{\{1\}, \{2,3\}, \{2,4\}, \{3,4\}, \{1,2,3,4\}\}$ & 56 & 8 \\
63 & $\{\{1\}, \{2\}, \{1,2\}, \{3\}, \{1,4\}\}$ & 31 & 7 & 155 & $\{\{1\}, \{2,3\}, \{2,4\}, \{1,3,4\}, \{1,2,3,4\}\}$ & 37 & 6 \\
64 & $\{\{1\}, \{2\}, \{1,2\}, \{3\}, \{1,2,4\}\}$ & 10 & 5 & 156 & $\{\{1\}, \{2,3\}, \{2,4\}, \{2,3,4\}, \{1,2,3,4\}\}$ & 18 & 6 \\
65 & $\{\{1\}, \{2\}, \{1,2\}, \{3\}, \{3,4\}\}$ & 35 & 7 & 157 & $\{\{1\}, \{2,3\}, \{1,2,4\}, \{1,3,4\}, \{2,3,4\}\}$ & 27 & 6 \\
66 & $\{\{1\}, \{2\}, \{1,2\}, \{3\}, \{1,3,4\}\}$ & 18 & 5 & 158 & $\{\{1\}, \{2,3\}, \{1,2,4\}, \{1,3,4\}, \{1,2,3,4\}\}$ & 29 & 6 \\
67 & $\{\{1\}, \{2\}, \{1,2\}, \{3\}, \{1,2,3,4\}\}$ & 27 & 6 & 159 & $\{\{1\}, \{2,3\}, \{1,2,4\}, \{2,3,4\}, \{1,2,3,4\}\}$ & 15 & 5 \\
68 & $\{\{1\}, \{2\}, \{1,2\}, \{1,3\}, \{1,4\}\}$ & 13 & 5 & 160 & $\{\{1\}, \{1,2,3\}, \{1,2,4\}, \{1,3,4\}, \{2,3,4\}\}$ & 24 & 6 \\
69 & $\{\{1\}, \{2\}, \{1,2\}, \{1,3\}, \{2,4\}\}$ & 8 & 3 & 161 & $\{\{1\}, \{1,2,3\}, \{1,2,4\}, \{2,3,4\}, \{1,2,3,4\}\}$ & 12 & 5 \\
70 & $\{\{1\}, \{2\}, \{1,2\}, \{1,3\}, \{1,2,4\}\}$ & 11 & 4 & 162 & $\{\{1,2\}, \{1,3\}, \{2,3\}, \{1,2,3\}, \{1,4\}\}$ & 12 & 5 \\
71 & $\{\{1\}, \{2\}, \{1,2\}, \{1,3\}, \{3,4\}\}$ & 22 & 8 & 163 & $\{\{1,2\}, \{1,3\}, \{2,3\}, \{1,2,3\}, \{1,2,4\}\}$ & 11 & 4 \\
72 & $\{\{1\}, \{2\}, \{1,2\}, \{1,3\}, \{1,3,4\}\}$ & 7 & 3 & 164 & $\{\{1,2\}, \{1,3\}, \{2,3\}, \{1,2,3\}, \{1,2,3,4\}\}$ & 4 & 2 \\
73 & $\{\{1\}, \{2\}, \{1,2\}, \{1,3\}, \{2,3,4\}\}$ & 15 & 5 & 165 & $\{\{1,2\}, \{1,3\}, \{2,3\}, \{1,4\}, \{1,2,4\}\}$ & 18 & 5 \\
74 & $\{\{1\}, \{2\}, \{1,2\}, \{1,3\}, \{1,2,3,4\}\}$ & 16 & 5 & 166 & $\{\{1,2\}, \{1,3\}, \{2,3\}, \{1,4\}, \{2,3,4\}\}$ & 27 & 6 \\
75 & $\{\{1\}, \{2\}, \{1,2\}, \{1,2,3\}, \{1,2,4\}\}$ & 15 & 5 & 167 & $\{\{1,2\}, \{1,3\}, \{2,3\}, \{1,4\}, \{1,2,3,4\}\}$ & 12 & 5 \\
76 & $\{\{1\}, \{2\}, \{1,2\}, \{1,2,3\}, \{3,4\}\}$ & 30 & 7 & 168 & $\{\{1,2\}, \{1,3\}, \{2,3\}, \{1,2,4\}, \{1,2,3,4\}\}$ & 13 & 5 \\
77 & $\{\{1\}, \{2\}, \{1,2\}, \{1,2,3\}, \{1,3,4\}\}$ & 9 & 5 & 169 & $\{\{1,2\}, \{1,3\}, \{1,2,3\}, \{1,4\}, \{2,3,4\}\}$ & 27 & 6 \\
78 & $\{\{1\}, \{2\}, \{1,2\}, \{1,2,3\}, \{1,2,3,4\}\}$ & 8 & 4 & 170 & $\{\{1,2\}, \{1,3\}, \{1,2,3\}, \{2,4\}, \{1,2,4\}\}$ & 8 & 3 \\
79 & $\{\{1\}, \{2\}, \{3\}, \{1,2,3\}, \{4\}\}$ & 40 & 8 & 171 & $\{\{1,2\}, \{1,3\}, \{1,2,3\}, \{2,4\}, \{1,3,4\}\}$ & 15 & 5 \\
80 & $\{\{1\}, \{2\}, \{3\}, \{1,2,3\}, \{1,4\}\}$ & 54 & 7 & 172 & $\{\{1,2\}, \{1,3\}, \{1,2,3\}, \{2,4\}, \{1,2,3,4\}\}$ & 5 & 3 \\
81 & $\{\{1\}, \{2\}, \{3\}, \{1,2,3\}, \{1,2,4\}\}$ & 23 & 6 & 173 & $\{\{1,2\}, \{1,3\}, \{1,2,3\}, \{1,2,4\}, \{2,3,4\}\}$ & 16 & 5 \\
82 & $\{\{1\}, \{2\}, \{3\}, \{1,2,3\}, \{1,2,3,4\}\}$ & 24 & 6 & 174 & $\{\{1,2\}, \{1,3\}, \{1,2,3\}, \{2,3,4\}, \{1,2,3,4\}\}$ & 9 & 4 \\
83 & $\{\{1\}, \{2\}, \{3\}, \{4\}, \{1,2,3,4\}\}$ & 89 & 10 & 175 & $\{\{1,2\}, \{1,3\}, \{1,4\}, \{2,3,4\}, \{1,2,3,4\}\}$ & 24 & 6 \\
84 & $\{\{1\}, \{2\}, \{3\}, \{1,4\}, \{1,2,4\}\}$ & 30 & 7 & 176 & $\{\{1,2\}, \{1,3\}, \{2,4\}, \{1,3,4\}, \{2,3,4\}\}$ & 23 & 5 \\
85 & $\{\{1\}, \{2\}, \{3\}, \{1,4\}, \{2,3,4\}\}$ & 55 & 7 & 177 & $\{\{1,2\}, \{1,3\}, \{2,4\}, \{1,3,4\}, \{1,2,3,4\}\}$ & 11 & 5 \\
86 & $\{\{1\}, \{2\}, \{3\}, \{1,4\}, \{1,2,3,4\}\}$ & 59 & 7 & 178 & $\{\{1,2\}, \{1,3\}, \{1,2,4\}, \{2,3,4\}, \{1,2,3,4\}\}$ & 13 & 5 \\
87 & $\{\{1\}, \{2\}, \{3\}, \{1,2,4\}, \{1,2,3,4\}\}$ & 26 & 6 & 179 & $\{\{1,2\}, \{1,2,3\}, \{1,2,4\}, \{3,4\}, \{1,3,4\}\}$ & 15 & 5 \\
88 & $\{\{1\}, \{2\}, \{1,3\}, \{1,2,3\}, \{1,4\}\}$ & 15 & 5 & 180 & $\{\{1,2\}, \{1,2,3\}, \{1,2,4\}, \{1,3,4\}, \{2,3,4\}\}$ & 20 & 5 \\
89 & $\{\{1\}, \{2\}, \{1,3\}, \{1,2,3\}, \{2,4\}\}$ & 13 & 5 & 181 & $\{\{1,2\}, \{1,2,3\}, \{3,4\}, \{1,3,4\}, \{1,2,3,4\}\}$ & 6 & 3 \\
90 & $\{\{1\}, \{2\}, \{1,3\}, \{1,2,3\}, \{1,2,4\}\}$ & 28 & 7 & 182 & $\{\{1,2\}, \{1,2,3\}, \{1,3,4\}, \{2,3,4\}, \{1,2,3,4\}\}$ & 10 & 4 \\
91 & $\{\{1\}, \{2\}, \{1,3\}, \{1,2,3\}, \{3,4\}\}$ & 37 & 7 & 183 & $\{\{1,2,3\}, \{1,2,4\}, \{1,3,4\}, \{2,3,4\}, \{1,2,3,4\}\}$ & 8 & 3 \\
92 & $\{\{1\}, \{2\}, \{1,3\}, \{1,2,3\}, \{1,3,4\}\}$ & 9 & 4 & & & & \\
\hline
\end{tabular}
\end{table}
\FloatBarrier

\section{Sample Bernstein Coefficients for Orbit $164$ in Section \ref{subsec:illustrative_subdivision_example}}
\label{appendix_sec:sample_bernstein_tables}
\thispagestyle{empty}
\footnotesize
\setlength{\tabcolsep}{3pt}
\renewcommand{\arraystretch}{0.88}


\begin{minipage}{\textwidth}
\captionsetup[table]{skip=2pt}
\captionof{table}{Bernstein coefficients for sub-simplex 1.}
\label{tab:our_leaf_1}

\begin{center}
\noindent\begin{tabularx}{\linewidth}{
>{\raggedright\arraybackslash}p{0.08\linewidth}
>{\raggedright\arraybackslash}p{0.25\linewidth}
>{\raggedright\arraybackslash}X
>{\centering\arraybackslash}p{0.07\linewidth}}
\toprule
Block & $\mb{p}^{ab}$ & $\mb{\gamma}^{ab}$ & $c^{ab}$ \\
\midrule
$(0,0)$ & $\{\,1:1/4\; 3:1/4\; 5:1/4\; 7:1/4\,\}$ & $\{\,0:1/3\; 10:1/6\; 15:1/6\,\}$ & $1/3$ \\
$(0,1)$ & $\{\,0:1/8\; 3:1/8\; 5:3/8\; 6:1/8\; 7:1/4\,\}$ & $\{\,0:1/6\; 4:1/6\; 10:1/4\; 17:1/12\,\}$ & $1/3$ \\
$(0,2)$ & $\{\,2:1/4\; 4:1/4\; 7:1/2\,\}$ & $\{\,10:1/4\; 15:1/4\; 17:1/6\; 35:1/3\,\}$ & $1/3$ \\
$(0,3)$ & $\{\,3:1/4\; 5:1/4\; 7:1/2\,\}$ & $\{\,0:1/3\; 4:1/4\; 5:1/12\; 10:1/6\,\}$ & $1/3$ \\
$(0,4)$ & $\{\,7:1/2\; 11:1/4\; 13:1/4\,\}$ & $\{\,1:1/12\; 2:1/4\; 14:1/12\; 15:1/6\; 16:1/12\; 17:1/4\; 25:1/12\; 43:1/12\; 44:1/4\; 47:1/6\; 55:1/4\,\}$ & $1/3$ \\
$(1,1)$ & $\{\,4:1/4\; 5:1/4\; 6:1/4\; 7:1/4\,\}$ & $\{\,2:1/3\; 15:1/6\; 17:1/6\,\}$ & $1/3$ \\
$(1,2)$ & $\{\,4:1/4\; 6:1/2\; 7:1/4\,\}$ & $\{\,2:1/3\; 15:1/4\; 17:1/12\,\}$ & $1/3$ \\
$(1,3)$ & $\{\,5:1/4\; 6:1/4\; 7:1/2\,\}$ & $\{\,0:1/4\; 1:1/12\; 4:1/6\; 5:1/12\; 8:1/12\; 10:1/6\,\}$ & $1/3$ \\
$(1,4)$ & $\{\,7:1/2\; 13:1/4\; 14:1/4\,\}$ & $\{\,2:1/6\; 14:1/6\; 15:1/4\; 17:1/4\; 22:1/6\; 34:1/6\; 40:1/6\; 48:1/6\; 55:1/3\,\}$ & $1/3$ \\
$(2,2)$ & $\{\,6:1\,\}$ & $\{\,1:1/3\; 8:1/3\,\}$ & $1/3$ \\
$(2,3)$ & $\{\,6:1/2\; 7:1/2\,\}$ & $\{\,0:1/6\; 1:1/6\; 4:1/6\; 8:1/6\; 10:1/6\,\}$ & $1/3$ \\
$(2,4)$ & $\{\,7:1/2\; 14:1/2\,\}$ & $\{\,17:1/6\; 18:1/6\; 21:1/3\; 26:1/3\; 34:1/3\; 39:1/3\; 44:1/3\; 49:1/6\,\}$ & $1/3$ \\
$(3,3)$ & $\{\,7:1\,\}$ & $\{\,0:1/3\; 4:1/3\; 10:1/3\,\}$ & $1/3$ \\
$(3,4)$ & $\{\,7:1/2\; 15:1/2\,\}$ & $\{\,0:1/3\; 4:1/3\; 10:1/3\; 19:1/6\,\}$ & $1/3$ \\
$(4,4)$ & $\{\,15:1\,\}$ & $\{\,0:1/3\; 4:1/3\; 10:1/3\; 19:1/3\,\}$ & $1/3$ \\
\bottomrule
\end{tabularx}
\end{center}
\end{minipage}
\vspace{2em}


\begin{minipage}{\textwidth}
\captionsetup{skip=2pt}
\captionof{table}{Bernstein coefficients for sub-simplex 2.}
\label{tab:our_leaf_2}
\begin{center}
\hspace{-1.5cm}
\noindent\begin{tabularx}{\linewidth}{
>{\raggedright\arraybackslash}p{0.08\linewidth}
>{\raggedright\arraybackslash}p{0.25\linewidth}
>{\raggedright\arraybackslash}X
>{\centering\arraybackslash}p{0.07\linewidth}}
\toprule
Block & $\mb{p}^{ab}$ & $\mb{\gamma}^{ab}$ & $c^{ab}$ \\
\midrule
$(0,0)$ & $\{\,1:1/4\; 3:1/4\; 5:1/4\; 7:1/4\,\}$ & $\{\,0:1/3\; 10:1/6\; 15:1/6\,\}$ & $1/3$ \\
$(0,1)$ & $\{\,1:1/4\; 5:1/2\; 7:1/4\,\}$ & $\{\,0:1/3\; 10:1/4\; 15:1/12\,\}$ & $1/3$ \\
$(0,2)$ & $\{\,0:1/8\; 3:1/8\; 5:3/8\; 6:1/8\; 7:1/4\,\}$ & $\{\,0:1/6\; 4:1/6\; 10:1/4\; 17:1/12\,\}$ & $1/3$ \\
$(0,3)$ & $\{\,3:1/4\; 5:1/4\; 7:1/2\,\}$ & $\{\,0:1/3\; 4:1/4\; 5:1/12\; 10:1/6\,\}$ & $1/3$ \\
$(0,4)$ & $\{\,7:1/2\; 11:1/4\; 13:1/4\,\}$ & $\{\,1:1/12\; 2:1/4\; 14:1/12\; 15:1/6\; 16:1/12\; 17:1/4\; 25:1/12\; 43:1/12\; 44:1/4\; 47:1/6\; 55:1/4\,\}$ & $1/3$ \\
$(1,1)$ & $\{\,5:1\,\}$ & $\{\,0:1/3\; 5:1/3\,\}$ & $1/3$ \\
$(1,2)$ & $\{\,4:1/4\; 5:1/2\; 7:1/4\,\}$ & $\{\,2:1/3\; 15:1/12\; 17:1/4\,\}$ & $1/3$ \\
$(1,3)$ & $\{\,5:1/2\; 7:1/2\,\}$ & $\{\,0:1/3\; 4:1/6\; 5:1/6\; 10:1/6\,\}$ & $1/3$ \\
$(1,4)$ & $\{\,7:1/2\; 13:1/2\,\}$ & $\{\,0:1/3\; 15:1/6\; 16:1/6\; 24:1/3\; 39:1/3\; 44:1/3\; 48:1/6\,\}$ & $1/3$ \\
$(2,2)$ & $\{\,4:1/4\; 5:1/4\; 6:1/4\; 7:1/4\,\}$ & $\{\,2:1/3\; 15:1/6\; 17:1/6\,\}$ & $1/3$ \\
$(2,3)$ & $\{\,5:1/4\; 6:1/4\; 7:1/2\,\}$ & $\{\,0:1/4\; 1:1/12\; 4:1/6\; 5:1/12\; 8:1/12\; 10:1/6\,\}$ & $1/3$ \\
$(2,4)$ & $\{\,7:1/2\; 13:1/4\; 14:1/4\,\}$ & $\{\,2:1/6\; 14:1/6\; 15:1/4\; 17:1/4\; 22:1/6\; 34:1/6\; 40:1/6\; 48:1/6\; 55:1/3\,\}$ & $1/3$ \\
$(3,3)$ & $\{\,7:1\,\}$ & $\{\,0:1/3\; 4:1/3\; 10:1/3\,\}$ & $1/3$ \\
$(3,4)$ & $\{\,7:1/2\; 15:1/2\,\}$ & $\{\,0:1/3\; 4:1/3\; 10:1/3\; 19:1/6\,\}$ & $1/3$ \\
$(4,4)$ & $\{\,15:1\,\}$ & $\{\,0:1/3\; 4:1/3\; 10:1/3\; 19:1/3\,\}$ & $1/3$ \\
\bottomrule
\end{tabularx}
\end{center}
\end{minipage}
\vspace{2em}


\begin{minipage}{\textwidth}
\captionsetup[table]{skip=2pt}
\captionof{table}{Bernstein coefficients for sub-simplex 3.}
\label{tab:our_leaf_3}

\begin{center}
\hspace{-1.5cm}
\noindent\begin{tabularx}{\linewidth}{
>{\raggedright\arraybackslash}p{0.08\linewidth}
>{\raggedright\arraybackslash}p{0.25\linewidth}
>{\raggedright\arraybackslash}X
>{\centering\arraybackslash}p{0.07\linewidth}}
\toprule
Block & $\mb{p}^{ab}$ & $\mb{\gamma}^{ab}$ & $c^{ab}$ \\
\midrule
$(0,0)$ & $\{\,2:1/4\; 3:1/4\; 6:1/4\; 7:1/4\,\}$ & $\{\,1:1/3\; 10:1/6\; 17:1/6\,\}$ & $1/3$ \\
$(0,1)$ & $\{\,0:1/8\; 3:3/8\; 5:1/8\; 6:1/8\; 7:1/4\,\}$ & $\{\,0:1/6\; 4:1/6\; 10:1/6\; 15:1/12\; 17:1/12\,\}$ & $1/3$ \\
$(0,2)$ & $\{\,2:1/4\; 6:1/2\; 7:1/4\,\}$ & $\{\,1:1/3\; 10:1/4\; 17:1/12\,\}$ & $1/3$ \\
$(0,3)$ & $\{\,3:1/4\; 6:1/4\; 7:1/2\,\}$ & $\{\,0:1/4\; 1:1/12\; 4:1/4\; 8:1/12\; 10:1/6\,\}$ & $1/3$ \\
$(0,4)$ & $\{\,7:1/2\; 11:1/4\; 14:1/4\,\}$ & $\{\,0:1/12\; 2:1/12\; 14:1/12\; 15:1/4\; 17:1/6\; 18:1/12\; 22:1/6\; 23:1/12\; 34:1/6\; 40:1/4\; 44:1/4\; 55:1/4\,\}$ & $1/3$ \\
$(1,1)$ & $\{\,1:1/4\; 3:1/4\; 5:1/4\; 7:1/4\,\}$ & $\{\,0:1/3\; 10:1/6\; 15:1/6\,\}$ & $1/3$ \\
$(1,2)$ & $\{\,2:1/4\; 4:1/4\; 7:1/2\,\}$ & $\{\,10:1/4\; 15:1/4\; 17:1/6\; 35:1/3\,\}$ & $1/3$ \\
$(1,3)$ & $\{\,3:1/4\; 5:1/4\; 7:1/2\,\}$ & $\{\,0:1/3\; 4:1/4\; 5:1/12\; 10:1/6\,\}$ & $1/3$ \\
$(1,4)$ & $\{\,7:1/2\; 11:1/4\; 13:1/4\,\}$ & $\{\,1:1/12\; 2:1/4\; 14:1/12\; 15:1/6\; 16:1/12\; 17:1/4\; 25:1/12\; 43:1/12\; 44:1/4\; 47:1/6\; 55:1/4\,\}$ & $1/3$ \\
$(2,2)$ & $\{\,6:1\,\}$ & $\{\,1:1/3\; 8:1/3\,\}$ & $1/3$ \\
$(2,3)$ & $\{\,6:1/2\; 7:1/2\,\}$ & $\{\,0:1/6\; 1:1/6\; 4:1/6\; 8:1/6\; 10:1/6\,\}$ & $1/3$ \\
$(2,4)$ & $\{\,7:1/2\; 14:1/2\,\}$ & $\{\,17:1/6\; 18:1/6\; 21:1/3\; 26:1/3\; 34:1/3\; 39:1/3\; 44:1/3\; 49:1/6\,\}$ & $1/3$ \\
$(3,3)$ & $\{\,7:1\,\}$ & $\{\,0:1/3\; 4:1/3\; 10:1/3\,\}$ & $1/3$ \\
$(3,4)$ & $\{\,7:1/2\; 15:1/2\,\}$ & $\{\,0:1/3\; 4:1/3\; 10:1/3\; 19:1/6\,\}$ & $1/3$ \\
$(4,4)$ & $\{\,15:1\,\}$ & $\{\,0:1/3\; 4:1/3\; 10:1/3\; 19:1/3\,\}$ & $1/3$ \\
\bottomrule
\end{tabularx}
\end{center}
\end{minipage}
\restoregeometry

\end{appendices}
\end{document}